\DeclareFontFamily{U}{tipa}{}
\DeclareFontShape{U}{tipa}{m}{n}{<->tipa10}{}
\newcommand{\arc@char}{{\usefont{U}{tipa}{m}{n}\symbol{62}}}%
\newcommand{\arc}[1]{\mathpalette\arc@arc{#1}}
\newcommand{\arc@arc}[2]{%
  \sbox0{$\m@th#1#2$}%
  \vbox{
    \hbox{\resizebox{\wd0}{\height}{\arc@char}}
    \nointerlineskip
    \box0
  }%
}
\begin{document}


\title{Optimal partitions for the sum and the maximum of eigenvalues}

\author{Beniamin Bogosel and Virginie Bonnaillie-No\"el}

\begin{abstract}
In this paper we compare the candidates to be spectral minimal partitions for two criteria: the maximum and the average of the first eigenvalue on each subdomains of the partition. We analyze in detail the square, the disk and the equilateral triangle. Using numerical simulations, we propose candidates for the max, prove that most of them can not be optimal for the sum and  then exhibit better candidates for the sum.
\end{abstract}

\KeysAndCodes{Minimal partitions, shape optimization, Dirichlet-Laplacian eigenvalues, curved polygons, numerical simulations}{49Q10, 35J05, 65K10, 65N06, 65N25}

\section{Introduction}
A great interest was shown lately towards problems concerning optimal partitions related to some spectral quantities (see \cite{BucButHen98,BucBut05,CafLin07,HelHofTer09}). Among them, we distinguish two problems which interest us. 
Let $\Omega$ be a bounded and connected domain and $\mathfrak P_{k}(\Omega)$ the set of partitions of $\Omega$ in $k$ disjoint and open subdomains $D_{i}$. We look for partitions $\mathcal D=(D_i)_{1\leq i\leq k}$ of $\Omega$ which minimize
\begin{description}
\item[{\bf\quad Problem 1.}] the largest first eigenvalue of the Dirichlet Laplace operator in $D_{i}$:
\begin{equation}
\mathfrak L_{k}(\Omega)= \min\left\{ \max_{1\leq i\leq k} \lambda_1(D_i),(D_{i})_{1\leq i\leq k}\in\mathfrak P_{k}(\Omega)\right\} . 
\end{equation}
\item[{\bf\quad Problem 2.}] the sum of the first eigenvalues of the Dirichlet Laplace operator in $D_{i}$:
\begin{equation}
\mathfrak L_{k,1}(\Omega)= \min\left\{\frac 1k\sum_{1\leq i\leq k}\lambda_1(D_i),(D_{i})_{1\leq i\leq k}\in\mathfrak P_{k}(\Omega)\right\}. 
\end{equation}
\end{description}

For simplicity we refer to these two problems in the sequel as minimizing the sum or the max. Theoretical results concerning the existence and regularity of the optimal partitions regarding these problems can be found in \cite{BucButHen98,CafLin07,HelHofTer09} (and references therein). Despite the increasing interest in these problems there are just a few cases where optimal partitions are known explicitly, and exclusively in the case of the max. 

\paragraph{Structure of the paper}
In the following section, we apply known results to obtain estimates of the energy of optimal partitions in the case of a disk, a square or an equilateral triangle and recall informations about the structure of an optimal partitions. 
Then, we focus on the minimization problem for the max: we give explicit results for $k=2,4$ and in other cases, we present the best candidates we obtained by using several numerical methods. 
In Section~\ref{sec.compar}, we recall and apply a criterion which shows when a partition optimal for the max cannot be optimal for the sum. 
In cases where the previous criterion shows that candidates for the max are not optimal for the sum, we propose better candidates in Section~\ref{sec.sum}. These candidates are either obtained with iterative methods already used in \cite{BouBucOud09,BBN16} or are constructed explicitly. 

\section{Applications of known results}
\subsection{Estimates of the energies}
By monotonocity of the $p$-norm, we easily compare the two optimal energies:
\begin{equation}
\frac 1k \mathfrak L_{k}(\Omega)\leq \mathfrak L_{k,1}(\Omega)\leq \mathfrak L_{k}(\Omega).
\end{equation}
More quantitative bounds can be obtained with the eigenmodes of the Dirichlet-Laplacian on $\Omega$. 
For $k\geq1$, we denote by $\lambda_{k}(\Omega)$ the $k$-th eigenvalue of the Dirichlet-Laplacian on $\Omega$ (arranged in increasing order and repeated with multiplicity) and by $L_k(\Omega)$ the smallest eigenvalue (if any) for which there exists an eigenfunction with $k$ nodal domains ({\it i.e.} the components of the nonzero set of the eigenfunction). In that case, the eigenfunction gives us a $k$-partition such that the first eigenvalue of the Dirichlet-Laplacian on each subdomain equals $L_{k}(\Omega)$.  We set $L_k(\Omega) =+\infty$ if there is no eigenfunction with $k$ nodal domains. 
It is standard to prove (see \cite{HelHofTer09} for example):
\begin{align}
\lambda_{k}(\Omega) & \leq {\mathfrak L}_{k}(\Omega) \leq L_{k}(\Omega),\label{ineq.HHOT}\\
\frac 1k\sum_{i=1}^k\lambda_{i}(\Omega)& \leq {\mathfrak L}_{k,1}(\Omega) \leq L_{k}(\Omega).\label{ineq.HHOTsum}
\end{align}
Let us consider $k=1,2$. Since the first eigenvalue of the Dirichlet-Laplacian is simple and $\Omega$ is connected, then necessarily any eigenfunction associated with $\lambda_{k}(\Omega)$ has one or two nodal sets whether $k=1$ or $2$. Consequently $L_{k}(\Omega)=\lambda_{k}(\Omega)$ for $k=1,2$ and  
\begin{equation}\label{eq.k12}
\mathfrak L_{k}(\Omega) = \lambda_{k}(\Omega)=L_{k}(\Omega), \quad \mbox{ when }k=1,2.
\end{equation}
Furthermore, any nodal partition associated with $\lambda_{k}(\Omega)$ is optimal for the max when $k=1,2$.
We say that an eigenfunction associated with $\lambda$ is {\it Courant-sharp} if it has $k$ nodal domains with $k=\min\{j,\lambda_{j}(\Omega)=\lambda\}$.\\

In this paper, we focus on three geometries: a square $\square$ of sidelength $1$, a disk $\ocircle$ of radius $1$ and an equilateral triangle $\triangle$ of sidelength $1$. The eigenvalues are explicit and given in Table~\ref{tab.vp} where  $j_{m,n}$ is the $n$-th positive zero of the Bessel function of the first kind $J_{m}$ and $\lambda_{k}(\Omega)$ is the $k$-th element of the set $\{\lambda_{m,n}(\Omega)\}$. In Table~\ref{tab.estim}, we explicit the lower and upper bounds in \eqref{ineq.HHOT}--\eqref{ineq.HHOTsum}. 

\begin{table}[h!]
\begin{center}\begin{tabular}{c | cl}
$\Omega$ & $\lambda_{m,n}(\Omega)$ & $m,n$\\
\hline
& \\[-8pt]
$\square$ & $\pi^2(m^2+n^2)$& $m,n\geq 1$\\[2pt]
$\triangle$ & $\frac{16}9 \pi^2(m^2+mn+n^2)$& $m,n\geq 1$\\[2pt]
$\ocircle$ & $j_{m,n}^2$ & $m\geq0$, $n\geq 1$ {(multiplicity 2 for $m\geq1$)}\\[2pt]
\end{tabular}\end{center}
\caption{Eigenvalues for the Dirichlet-Laplacian on $\Omega=\square,\ \triangle,\ \ocircle$. \label{tab.vp}}
\end{table}

\begin{table}[h!]
{\small\begin{center}\begin{tabular}{|c||r|r|r||r|r|r||r|r|r|}
\hline
& \multicolumn{3}{c||}{Square} & \multicolumn{3}{c||}{Disk}& \multicolumn{3}{c|}{Equilateral triangle}\\
\hline
 $k$ & \multicolumn{1}{c|}{$\frac1k\displaystyle\sum_{i=1}^k\lambda_{i}$} & \multicolumn{1}{c|}{$\lambda_{k}$} & \multicolumn{1}{c||}{$L_{k}$} & \multicolumn{1}{c|}{$\frac1k\displaystyle\sum_{i=1}^k\lambda_{i}$} & \multicolumn{1}{c|}{$ \lambda_{k}$} & \multicolumn{1}{c||}{$L_{k}$} & \multicolumn{1}{c|}{$\frac1k\displaystyle\sum_{i=1}^k\lambda_{i}$} & \multicolumn{1}{c|}{$\lambda_{k}$} & \multicolumn{1}{c|}{$L_{k}$} \\
 \hline
 1  &  19.74 &  19.74 & 19.74 & 5.78 &  5.78 & 5.78  &  52.64 &  52.64 & 52.64 \\
 2  &  34.54 &  49.35 & 49.35 & 10.23 & 14.68 & 14.68  &  87.73 & 122.82 & 122.82 \\
 3  &  39.48 &  49.35 & 98.70 & 11.72& 14.68& 74.89  &  99.43 & 122.82 & 228.10 \\
 4  &  49.35 &  78.96 & 78.96 & 15.38& 26.37 & 26.37  & 127.21 & 210.55 & 210.55 \\
 5  &  59.22 &  98.70 & 256.61 & 17.58 & 26.37 & 222.93 & 147.39 & 228.10 & 368.46 \\
 6  &  65.80 &  98.70 & 246.74 & 19.73 & 30.47 & 40.71  & 160.84 & 228.10 & 543.92 \\
 7  &  74.73 & 128.30 & 493.48 & 22.72 & 40.71 & 449.93   & 185.49 & 333.37 & 684.29 \\
 8  &  81.42 & 128.30 & 197.39 & 24.97 & 40.71 & 57.58  & 203.97 & 333.37 & 491.29 \\
 9  &  91.02 & 167.78 & 177.65 & 27.67& 49.22 & 755.89  & 222.25 & 368.46 & 473.74 \\
10  &  98.70 & 167.78 & 286.22 & 29.82 & 49.22 & 76.94  & 236.87 & 368.46 & 1000.12 \\
\hline
\end{tabular}\end{center}}
\caption{Bounds \eqref{ineq.HHOT}--\eqref{ineq.HHOTsum} when $\Omega=\square$, $\ocircle$ and $\triangle$.\label{tab.estim}}
\end{table}

\begin{rem}\label{rem.k4}
Looking at Table \ref{tab.sect}, we observe that the lower and upper bounds \eqref{ineq.HHOT} for the max are equal when $k=1,2,4$. Thus, in that case, the optimal energy for the max is $\lambda_{k}(\Omega)$ and any associated nodal partition is optimal. We can wonder if it can happen for larger $k$. We will come back to this question in the following section. 
\end{rem}

In the case of the disk, for an odd index $k$, the upper bound $L_k$ given in Table~\ref{tab.estim} corresponds to the $k$-th simple eigenvalue on the disk whose associated nodal partition has $k$ concentric annuli. This eigenvalue grows rapidly with $k$. In these cases, we may observe that considering partitions in $k$ equal sectors gives a better upper bound. If we denote by $\Sigma_{\alpha}$ the angular sector of opening $\alpha$ (see \cite{BonLen14} for analysis on angular sectors), this new upper bound writes
\[{\mathfrak L}_{k}(\Omega) \leq \lambda_{1}\left(\Sigma_{\frac{2\pi}k}\right)=j^2_{\frac k2,1}.\]
This heavily improved upper bound is estimated in Table~\ref{tab.sect}.
\begin{table}[h!]
\begin{center}
\begin{tabular}{|c|c|c|c|c|c|}
\hline
$k$ & 3 & 5 & 7 & 9\\
\hline
$\lambda_{1}\left(\Sigma_{\frac{2\pi}k}\right)$ & 20.19 & 33.22 & 48.83 & 66.95\\
\hline
\end{tabular}
\caption{Energy of the partitions with $k$ angular sectors.\label{tab.sect}}
\end{center}
\end{table}
We can notice that if we replace $L_{k}(\ocircle)$ in Table~\ref{tab.sect} by these values, we have monotonicity with respect to $k$ for the upper bound which is more consistent with the monotonicity of $\mathfrak L_{k}(\ocircle)$ and $\mathfrak L_{k,1}(\ocircle)$. \\

Bounds \eqref{ineq.HHOT}--\eqref{ineq.HHOTsum} provide a quantative information about the energy of the optimal partition but no information about the structure of the minimal partition. Let us know discuss this point.

\subsection{About the minimal partitions}
As we have mentioned previously, several works are dealing with the existence of optimal partitions (see \cite{BucButHen98,CafLin07,HelHofTer09}\ldots). These results also give information about their regularity.
\begin{thm} \label{thm.ex}
For any $k\geq1$, there exists a regular optimal $k$-partition for any of the two optimization problems. 
Furthermore any optimal partition for the max is an equipartition, {\it i.e.} the first eigenvalues on each subdomain are equal.
\end{thm}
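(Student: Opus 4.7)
The plan is to treat the two assertions separately. For the existence and regularity part, I would follow the approach of \cite{BucButHen98,CafLin07,HelHofTer09}. First, I would relax the problem by enlarging the admissible class to quasi-open (or capacitary) subsets of $\Omega$, on which $\lambda_1$ extends in the standard capacitary sense. Then, given a minimizing sequence $\mathcal D^n=(D_i^n)_{1\leq i\leq k}$, I would invoke $\gamma$-compactness of the family of capacitary sets contained in $\Omega$ to extract a subsequence converging to a limit partition, and use the lower semicontinuity of $\lambda_1$ along this convergence, together with the continuity of $\max$ and $\sum$, to conclude that the limit achieves the infimum. Regularity of the resulting free interfaces (local smoothness away from a relatively closed singular set of lower dimension) would then be obtained by adapting the monotonicity-formula and almost-harmonic-competitor arguments of Caffarelli--Lin and the Alt--Caffarelli--Friedman type estimates used in \cite{HelHofTer09}.

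For the equipartition statement I would argue by contradiction. Assume $\mathcal D=(D_i)_{1\leq i\leq k}$ is an optimal partition for the max, and set $M=\max_{i}\lambda_1(D_i)=\mathfrak L_k(\Omega)$. Let $I=\{i:\lambda_1(D_i)=M\}$ and suppose $I\neq\{1,\dots,k\}$, so that there exists $j\notin I$ with $\lambda_1(D_j)<M$. Since $\bigcup_{i\in I}D_i$ cannot exhaust $\Omega$ up to a negligible set (otherwise every $D_i$ would have eigenvalue $M$), one can find, possibly after chaining through neighbouring cells, an index $i_0\in I$ such that $\partial D_{i_0}\cap\partial D_j$ has positive $(n{-}1)$-dimensional measure. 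Now, transferring a sufficiently small piece of $D_j$ adjacent to $D_{i_0}$ to $D_{i_0}$ yields a new partition $\widetilde{\mathcal D}$ with $\lambda_1(\widetilde D_{i_0})<M$ by strict domain monotonicity of $\lambda_1$, while $\lambda_1(\widetilde D_j)<M$ persists by continuity of $\lambda_1$ under small capacitary perturbations. Iterating this operation on all indices in $I$ (or simply taking the perturbation small enough so that $\max_i\lambda_1(\widetilde D_i)<M$) contradicts the optimality of $\mathcal D$.

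The main obstacle is clearly the existence and regularity part, which relies on a substantial body of technical machinery: $\gamma$-convergence of capacitary measures, strict monotonicity and continuity of $\lambda_1$ in this generalized framework, and the delicate free-boundary regularity theory for the interfaces. By comparison, the equipartition argument is elementary once one has the strict monotonicity of $\lambda_1$ and a robust notion of adjacency; the only minor subtlety there is ruling out the degenerate configuration in which the cells with maximal eigenvalue are isolated from those with strictly smaller eigenvalue, which is handled by the chaining argument sketched above.
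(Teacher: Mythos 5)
The paper does not prove Theorem~\ref{thm.ex} at all: it is stated as a recollection of known results and the reader is referred to \cite{BucButHen98,CafLin07,HelHofTer09}. So there is no in-paper argument to compare against, and your proposal should be judged against the cited literature. On that basis, your plan for the existence and regularity part is the standard one (relaxation to quasi-open sets, $\gamma$-compactness and lower semicontinuity for existence; Caffarelli--Lin and Helffer--Hoffmann-Ostenhof--Terracini free-boundary techniques for regularity), but be aware that as written it is a roadmap rather than a proof: the entire content of the regularity statement, and in particular the equal-angle meeting property at singular points that the paper actually uses later, is deferred to ``adapting'' the cited machinery. That is acceptable only because the paper itself treats this as a black box.

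Your equipartition argument is a genuine proof sketch and is essentially the classical one. Two points deserve care. First, the strict decrease $\lambda_1(\widetilde D_{i_0})<M$ requires that the transferred piece have positive capacity and that $\widetilde D_{i_0}$ be connected (so that positivity of the first eigenfunction gives strictness); attaching a small ball centered at a regular point of the common interface handles both. Second, the chaining step implicitly uses that an optimal partition leaves no residual set: if $\Omega\setminus\overline{\cup_i D_i}$ had positive capacity one should first enlarge cells into it, and this exhaustiveness is what guarantees that the adjacency graph of the cells is connected, so that some cell of $I$ borders a cell with strictly smaller eigenvalue. With those provisos, iterating the transfer (noting that the set of cells at level $M$ strictly shrinks at each step while no cell below $M$ is pushed up to $M$ for small enough perturbations) does yield $\max_i\lambda_1(\widetilde D_i)<M$ and the desired contradiction. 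In short: no gap in the equipartition part beyond bookkeeping, and the existence/regularity part is, like the paper's, a citation in disguise.
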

Let us recall that a $k$-partition $\mathcal D$ is called \emph{regular} if its boundary, $N(\mathcal D)= \cup_{1\leq i\leq k}\partial D_{i}$, is locally a regular curve, except at a finite number of singular points, where a finite number of half-curves meet with equal angles. We say that $\mathcal D$ satisfies the \emph{equal angle meeting property}.\\

\section{Candidates for the max}
\label{sec.cmax}

\subsection{Explicit solutions}
The following result, established in \cite{HelHofTer09} gives information about the equality case in \eqref{ineq.HHOT} and permits to solve, in some cases, the optimization problem for the max. In that cases, the minimal partitions are nodal. 
\begin{thm}\label{thm.HHOT}
The nodal partition of a Courant-sharp eigenfunction is optimal for the max. Conversely, if a minimal partition for the max is nodal, the associated eigenfunction is Courant-sharp.
\end{thm}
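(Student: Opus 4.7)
The plan is to handle the two implications separately, with the key tool being the characterization of the first Dirichlet eigenfunction on a connected open set as the unique (up to scaling) sign-definite eigenfunction.

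\textbf{Forward direction.} Let $u$ be a Courant-sharp eigenfunction of $\Omega$ associated with the eigenvalue $\lambda = \lambda_k(\Omega)$, having nodal domains $D_1,\dots,D_k$. On each $D_i$, the restriction $u|_{D_i}$ is a Dirichlet eigenfunction of $D_i$ with eigenvalue $\lambda$ that does not change sign, hence it must be the first Dirichlet eigenfunction of $D_i$, giving $\lambda_1(D_i)=\lambda$. Therefore the nodal partition satisfies $\max_i\lambda_1(D_i)=\lambda=\lambda_k(\Omega)$, and combining with the lower bound $\lambda_k(\Omega)\leq\mathfrak{L}_k(\Omega)$ from \eqref{ineq.HHOT} immediately forces optimality.

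\textbf{Converse direction.} Suppose a minimal $k$-partition $(D_i)$ for the max is nodal, arising from an eigenfunction $u$ with eigenvalue $\lambda$. The same argument yields $\lambda_1(D_i)=\lambda$ for each $i$, so $\mathfrak{L}_k(\Omega)=\lambda$. Since $u$ provides an eigenfunction with $k$ nodal domains and eigenvalue $\lambda$, the definition of $L_k$ gives $L_k(\Omega)\leq\lambda$; combined with \eqref{ineq.HHOT} this forces $\mathfrak{L}_k(\Omega)=L_k(\Omega)=\lambda$.

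To conclude that $u$ is Courant-sharp, it remains to show that $\lambda=\lambda_k(\Omega)$ and that $k$ is the smallest index with this eigenvalue. I would proceed by contradiction: if $\lambda_k(\Omega)<\lambda$ strictly, then the first $k$ eigenfunctions $u_1,\dots,u_k$ of $\Omega$ span a $k$-dimensional subspace of $H_0^1(\Omega)$ on which the Rayleigh quotient is bounded above by $\lambda_k(\Omega)<\lambda$. Combining this strict gap with the equal-angle meeting property of regular optimal partitions (Theorem~\ref{thm.ex}), one constructs a competing $k$-partition whose maximal first eigenvalue is strictly below $\lambda=\mathfrak{L}_k(\Omega)$, contradicting optimality.

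This last step, namely passing from the existence of a $k$-dimensional subspace with low Rayleigh quotients to an explicit competitor $k$-partition that strictly improves the max, is the delicate point and the main obstacle; it is the content of the finer variational analysis of Helffer, Hoffmann-Ostenhof and Terracini, and is what ultimately links Courant-sharpness to the equality case in \eqref{ineq.HHOT}.
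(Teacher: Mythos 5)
First, note that the paper does not actually prove Theorem~\ref{thm.HHOT}: it is quoted from \cite{HelHofTer09}, so there is no internal proof to compare against. Your forward direction is correct and is the standard argument: on each nodal domain $D_i$ the restriction of the eigenfunction is sign-definite and lies in $H^1_0(D_i)$, hence $\lambda_1(D_i)=\lambda=\lambda_k(\Omega)$, and the lower bound in \eqref{ineq.HHOT} then forces optimality. The first half of your converse is also fine: a nodal minimal partition gives $\mathfrak L_k(\Omega)=L_k(\Omega)=\lambda$.

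The genuine gap is the last step of the converse, the proof that $\lambda=\lambda_k(\Omega)$ with $k$ minimal, and the route you sketch for closing it cannot work as stated. You propose: if $\lambda_k(\Omega)<\lambda=\mathfrak L_k(\Omega)$, use the $k$-dimensional span of the first $k$ eigenfunctions (Rayleigh quotient $\leq\lambda_k(\Omega)$) together with the equal-angle property to build a competitor $k$-partition with maximal eigenvalue below $\mathfrak L_k(\Omega)$. But nowhere in this sketch do you use the hypothesis that the minimal partition is \emph{nodal}; if such a construction existed unconditionally, it would prove $\lambda_k(\Omega)=\mathfrak L_k(\Omega)$ for every $\Omega$ and every $k$, which is false --- the paper's own Proposition~\ref{prop.k24} and Table~\ref{tab.estim} show, e.g., $\lambda_3(\ocircle)=14.68<\mathfrak L_3(\ocircle)\approx 20.19$. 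Passing from a low-energy $k$-dimensional subspace to a low-energy $k$-partition is precisely the obstruction that makes the inequality $\lambda_k(\Omega)\leq\mathfrak L_k(\Omega)$ generically strict. The actual argument of Helffer--Hoffmann-Ostenhof--Terracini exploits the nodal structure in an essential way (pairs of neighbouring cells of a minimal partition form minimal $2$-partitions of their union, hence are nodal for the second eigenvalue there, and a deformation of the eigenfunction then yields the contradiction), so the missing ingredient is not a technicality but the heart of the converse implication.
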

This theorem means that if we have one equality in \eqref{ineq.HHOT}, then we have equality everywhere and any optimal partition is nodal.

As observed in \eqref{eq.k12}, we know that the optimal $k$-partition for the max is given by the $k$-th eigenfunction when $k=1,2$. Remark \ref{rem.k4} notices that it is still the case for $k=4$ for the three considered geometries. The following result established in \cite{HelHofTer09, MR3445517,BH2016LMP} show that for other $k$, the situation is very different.

\begin{prop}\label{prop.k24}
If $\Omega$ is a disk $\ocircle$, a square $\square$ or an equilateral triangle $\triangle$, then 
\[\lambda_{k}(\Omega)=\mathfrak L_{k}(\Omega)= L_k(\Omega) \qquad\mbox{if and only if}\qquad k=1,2,4.\]
Thus the minimal $k$-partition for the max is nodal if and only if $k=1,2,4$. 
\end{prop}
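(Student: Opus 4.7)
The plan is to reduce the statement to a classification of Courant-sharp eigenvalues. Combining Theorem \ref{thm.HHOT} with \eqref{ineq.HHOT} shows that the triple equality $\lambda_k(\Omega)=\mathfrak L_k(\Omega)=L_k(\Omega)$ is equivalent to $\lambda_k(\Omega)$ being Courant-sharp, so I have to identify, for each of the three geometries, the indices $k$ at which this occurs.

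For $k\in\{1,2,4\}$: cases $k=1,2$ are covered by \eqref{eq.k12}, and $k=4$ is singled out in Remark \ref{rem.k4}. To make the latter explicit, one exhibits an eigenfunction of $\lambda_4(\Omega)$ with exactly four nodal domains, namely $\sin(2\pi x)\sin(2\pi y)$ on $\square$, a $j_{2,1}^2$-eigenfunction whose two nodal diameters are orthogonal on $\ocircle$, and a suitable $\lambda_{2,2}$-combination on $\triangle$. The fact that these configurations are Courant-sharp at index $4$ is readable from Table \ref{tab.estim}, where $\lambda_4=L_4$ in each column.

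For $k\notin\{1,2,4\}$: the case $k=3$ is ruled out immediately because Table \ref{tab.vp} yields $\lambda_2(\Omega)=\lambda_3(\Omega)$ for all three domains (via the symmetry $(m,n)\leftrightarrow(n,m)$ for $\square$ and $\triangle$, and the multiplicity $2$ of $j_{1,1}^2$ for $\ocircle$); hence $\min\{j:\lambda_j=\lambda_3\}=2$, which forbids any eigenfunction of $\lambda_3$ from being Courant-sharp. For $k\geq 5$ one uses a Pleijel-type bound: assuming a Courant-sharp eigenfunction with nodal domains $D_1,\ldots,D_k$, Faber--Krahn gives
\[
\lambda_k(\Omega)=\lambda_1(D_i)\geq \frac{\pi j_{0,1}^2}{|D_i|},
\]
so that summing over $i$ produces the necessary inequality
\[
\lambda_k(\Omega)\,|\Omega|\geq \pi j_{0,1}^2\,k.
\]
Combined with the explicit eigenvalues of Table \ref{tab.vp} (or with Weyl's law), this leaves only a finite list of candidate indices $k$ to be inspected.

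The hard part lies in this residual finite check, which is precisely the content of \cite{HelHofTer09,MR3445517,BH2016LMP}. For each remaining $k$, one must enumerate all eigenfunctions of $\lambda_k(\Omega)$ --- including linear combinations that appear when the eigenvalue is multiple --- and verify by direct nodal analysis that none of them attains $k$ nodal domains. The arguments rely on the explicit trigonometric or Bessel form of the eigenfunctions together with symmetry constraints (reflections through the axes of $\square$ and $\triangle$, rotational invariance on $\ocircle$) that restrict the admissible nodal patterns; this is the genuinely nontrivial step of the proof.
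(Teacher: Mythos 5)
Your proposal is correct and follows essentially the same route as the paper, which itself offers no proof beyond the citation of \cite{HelHofTer09,MR3445517,BH2016LMP}: your reduction to the classification of Courant-sharp eigenvalues (explicit nodal counts for $k=1,2,4$, exclusion of $k=3$ via $\lambda_2=\lambda_3$, and the Faber--Krahn/Pleijel bound reducing $k\ge 5$ to a finite case-by-case nodal analysis) is precisely the argument carried out in those references. As in the paper, the decisive finite check is deferred to the literature rather than reproved.
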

Figure~\ref{fig.PartNod} gives examples of optimal $k$-partitions for the max. Note that since $\lambda_{2}(\Omega)$ is double, the minimal $2$-partition is not unique whereas for $k=4$ we do have 	uniqueness.

\begin{figure}[h!]
\begin{center}
\begin{minipage}[b]{.6\linewidth}
\begin{center}
\includegraphics[height=1.4cm]{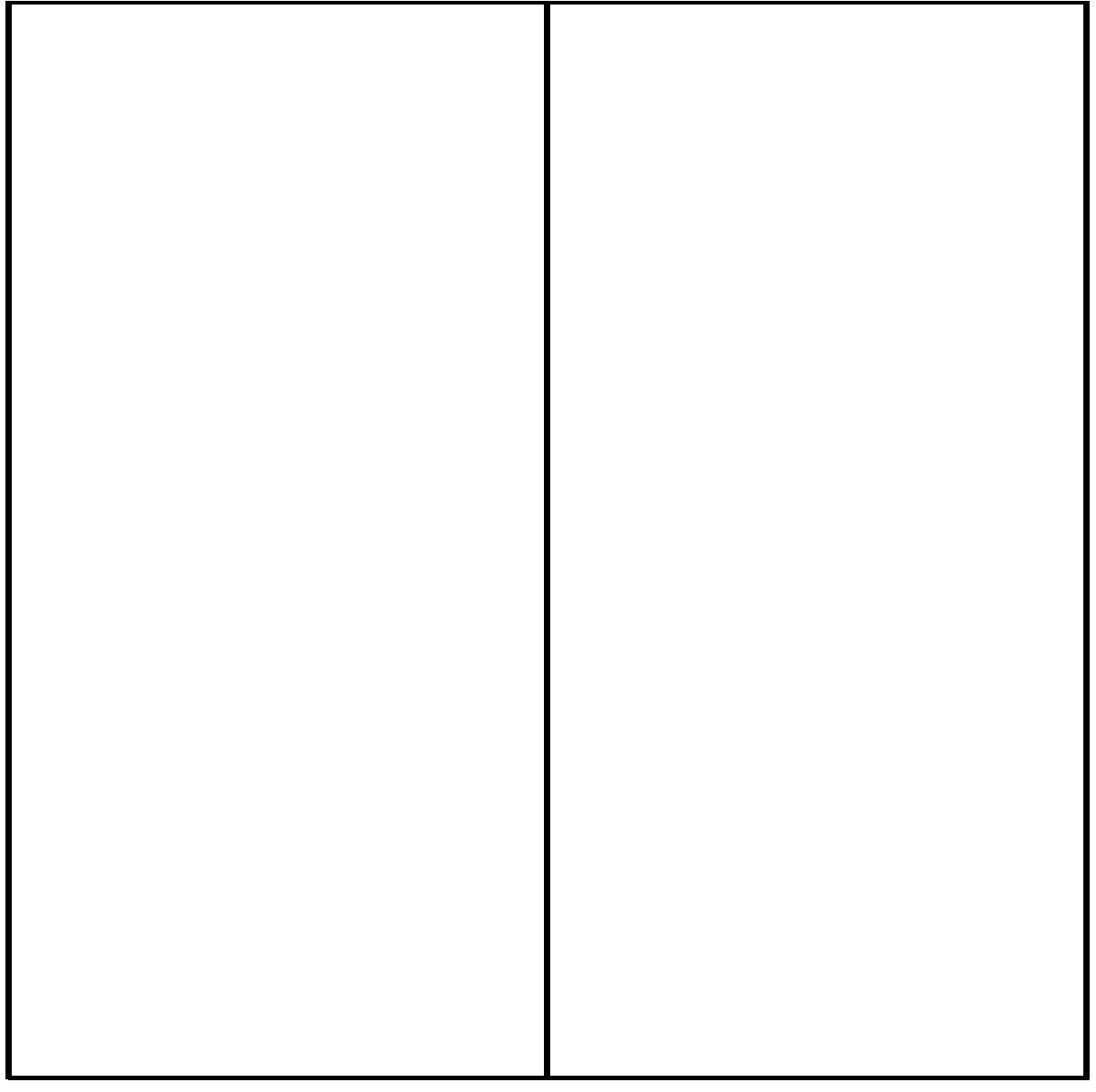}\
\includegraphics[height=1.4cm]{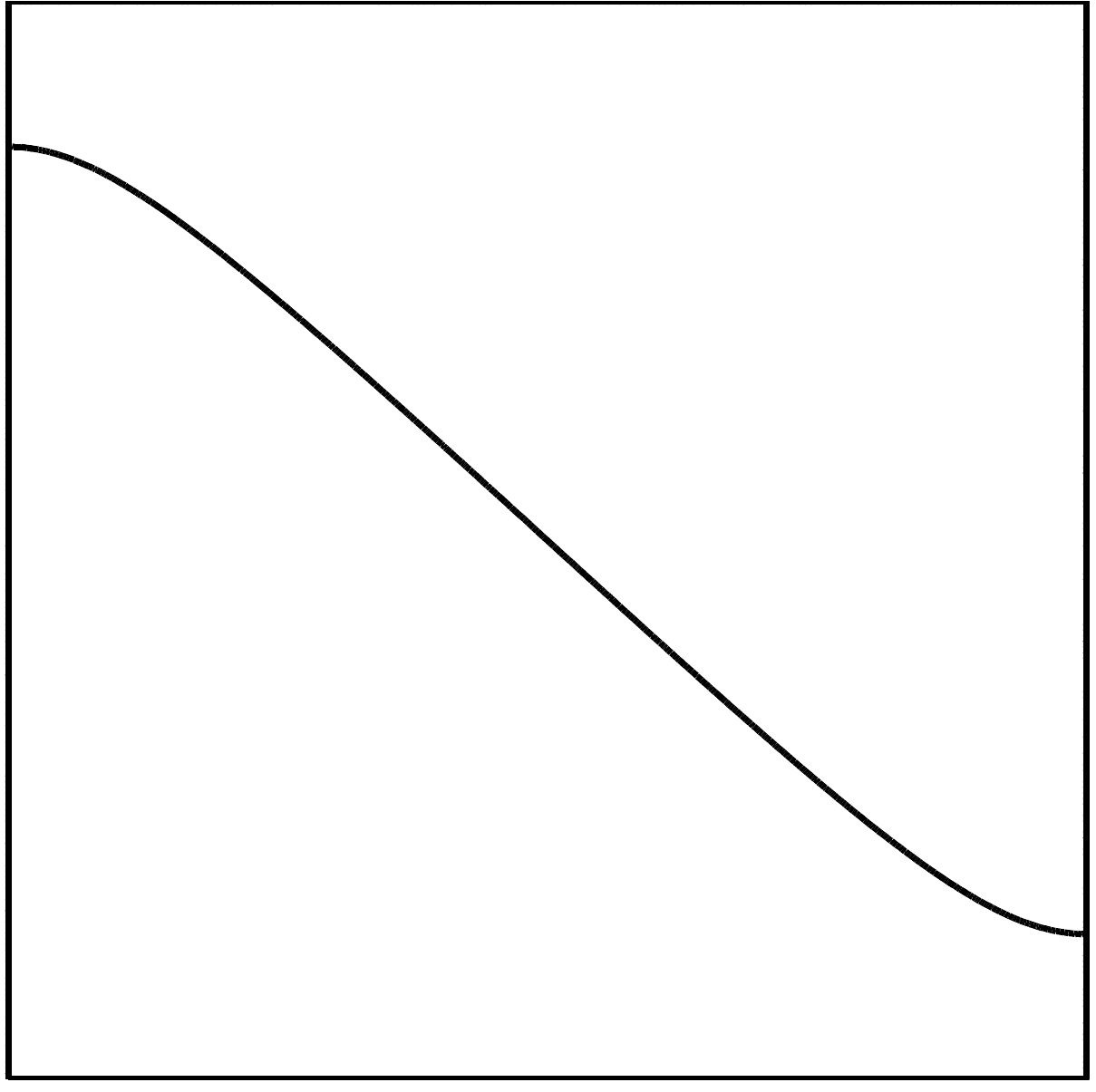}\ 
\includegraphics[height=1.4cm]{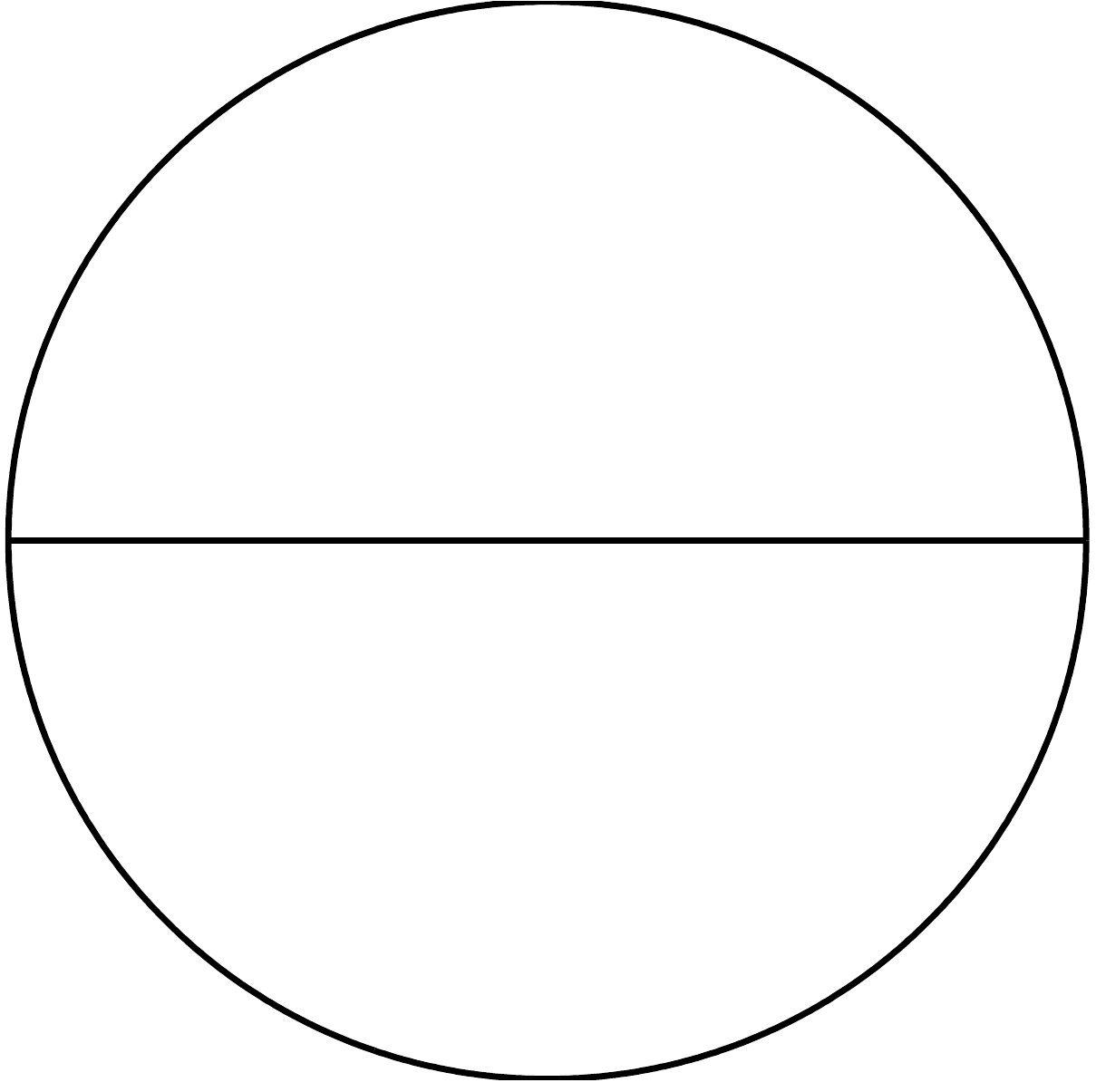}
\includegraphics[height=1.4cm]{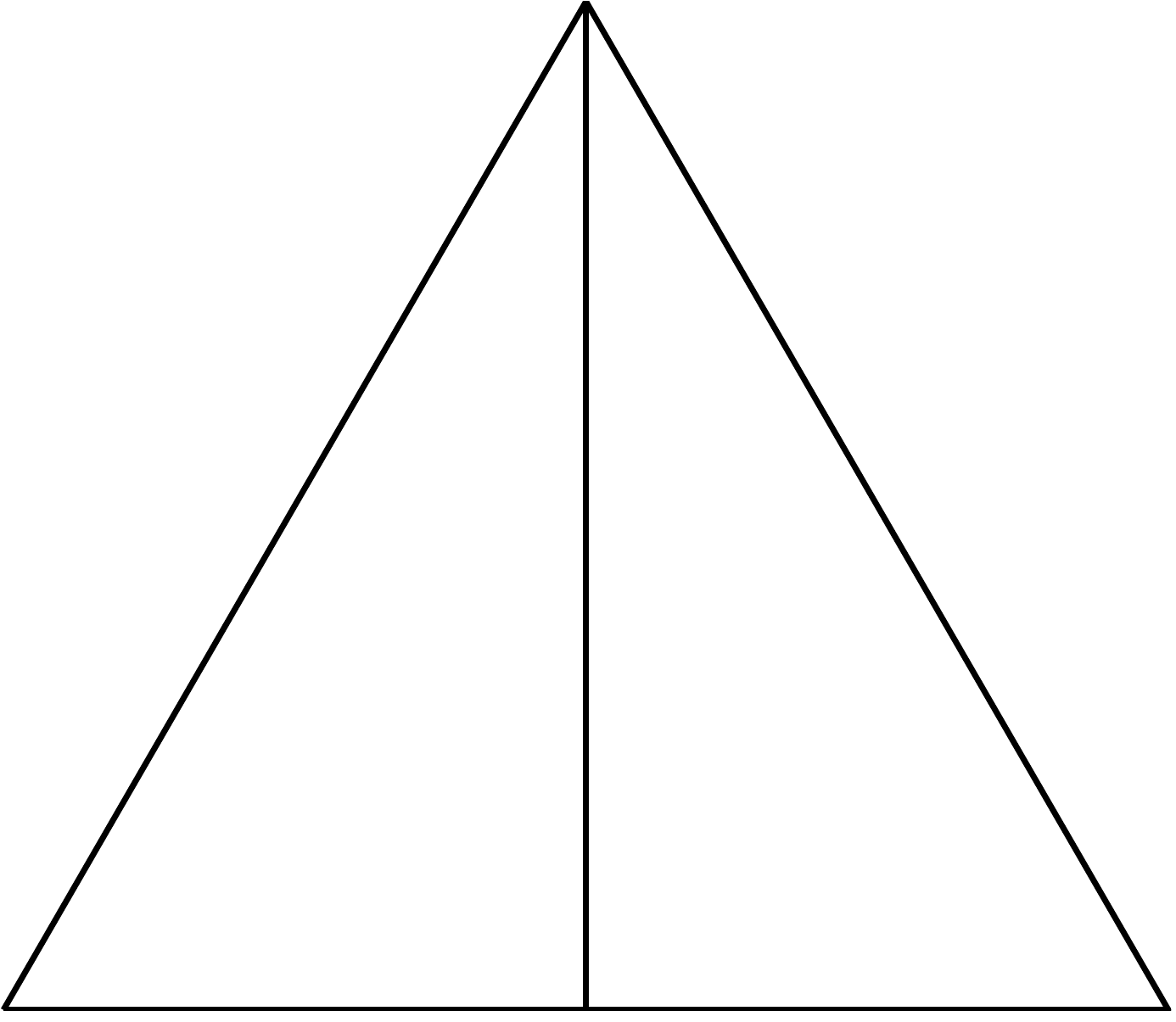}\
\includegraphics[height=1.4cm]{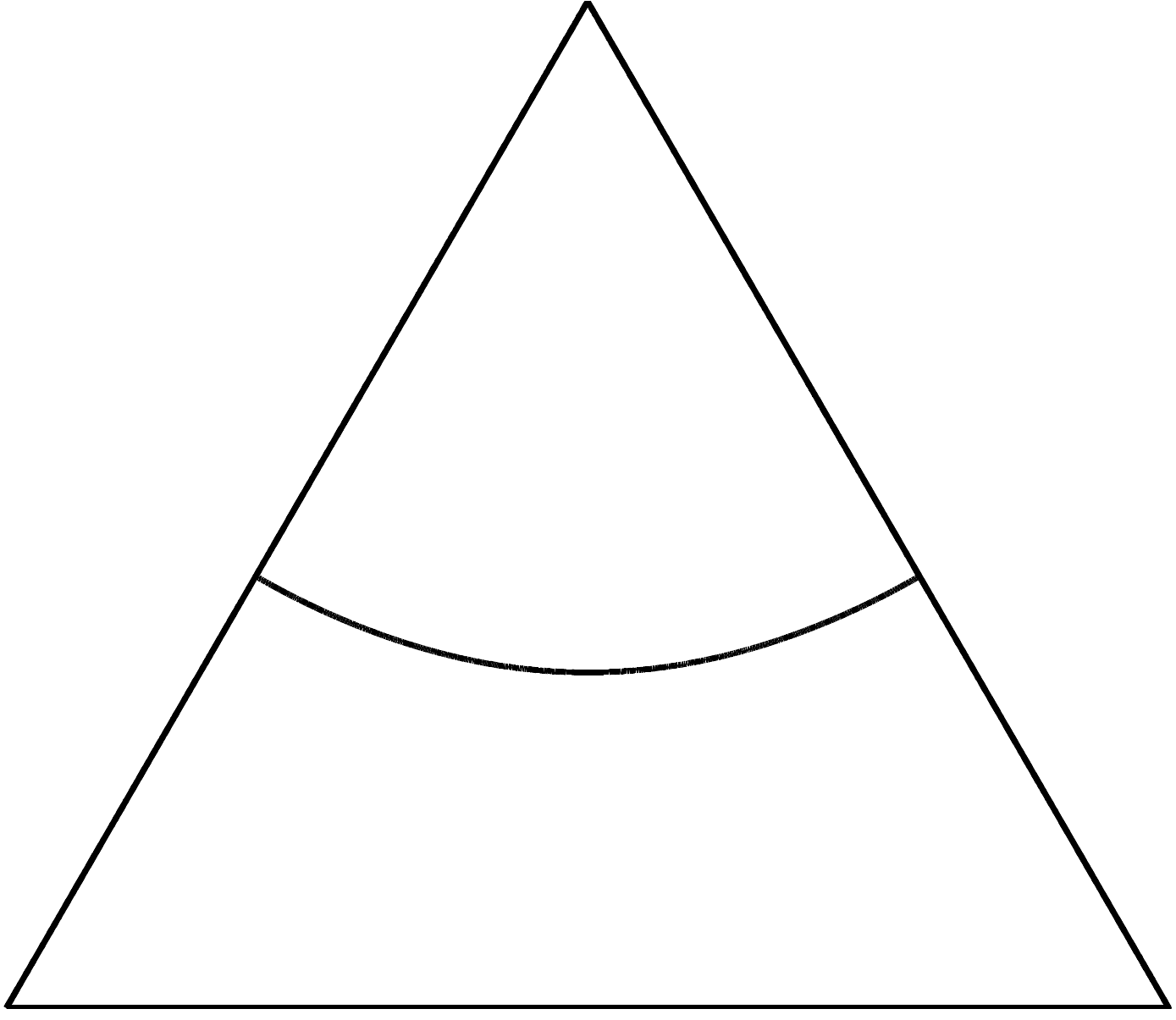}
\subcaption{$k=2$}
\end{center}
\end{minipage}
\hfill\begin{minipage}[b]{.38\linewidth}
\begin{center}
\includegraphics[height=1.4cm]{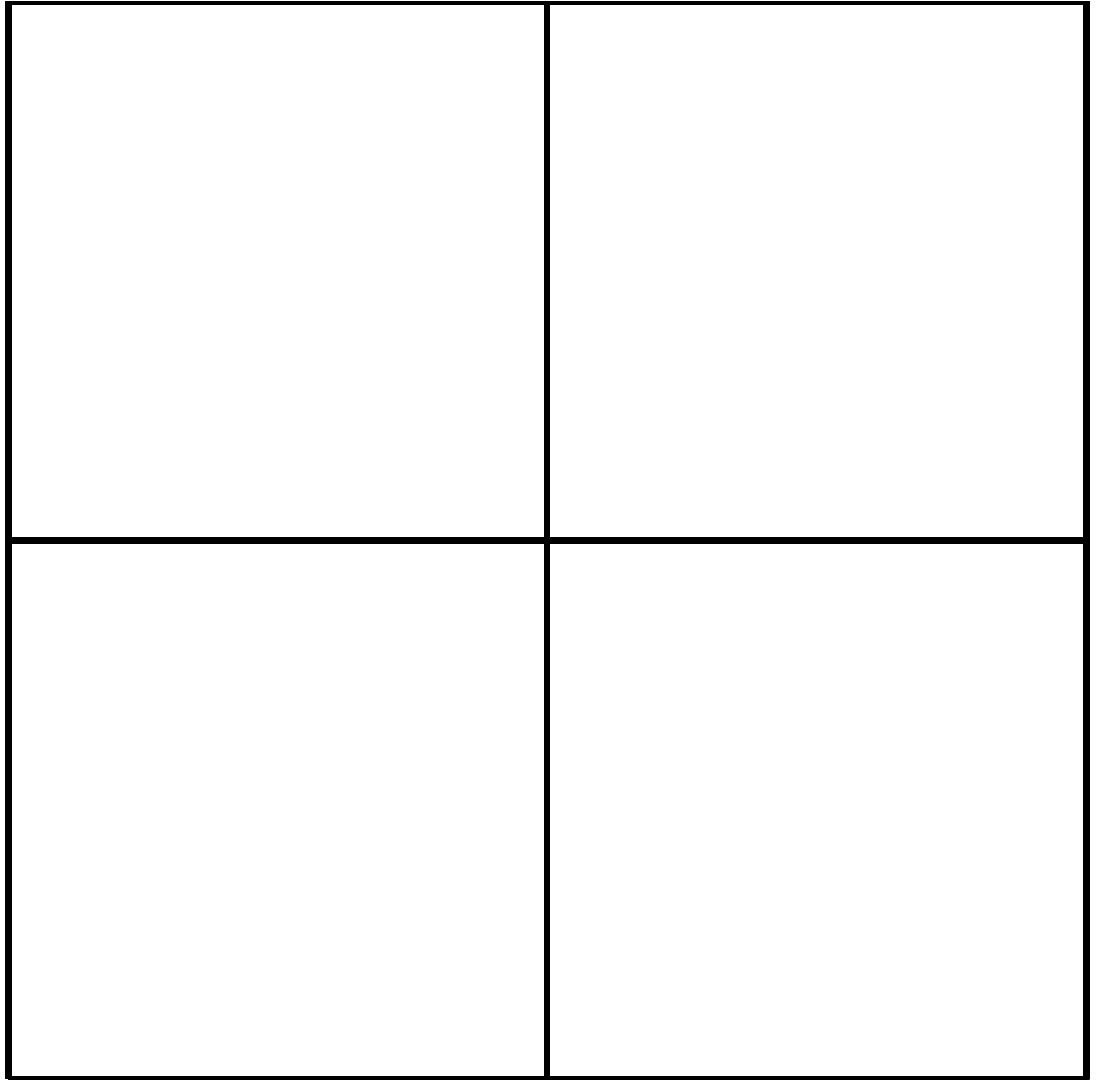}\
\includegraphics[height=1.4cm]{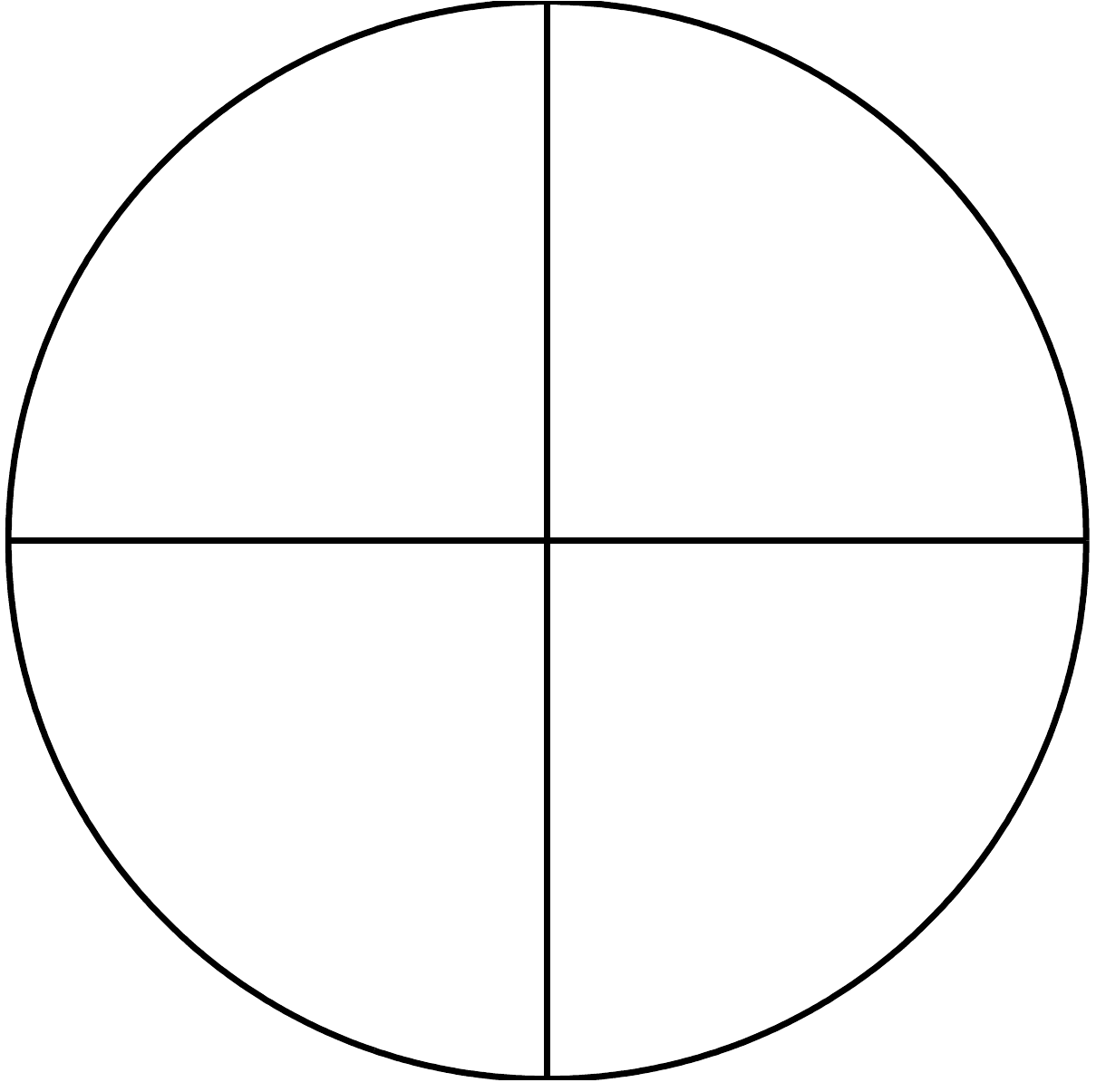}
\includegraphics[height=1.4cm]{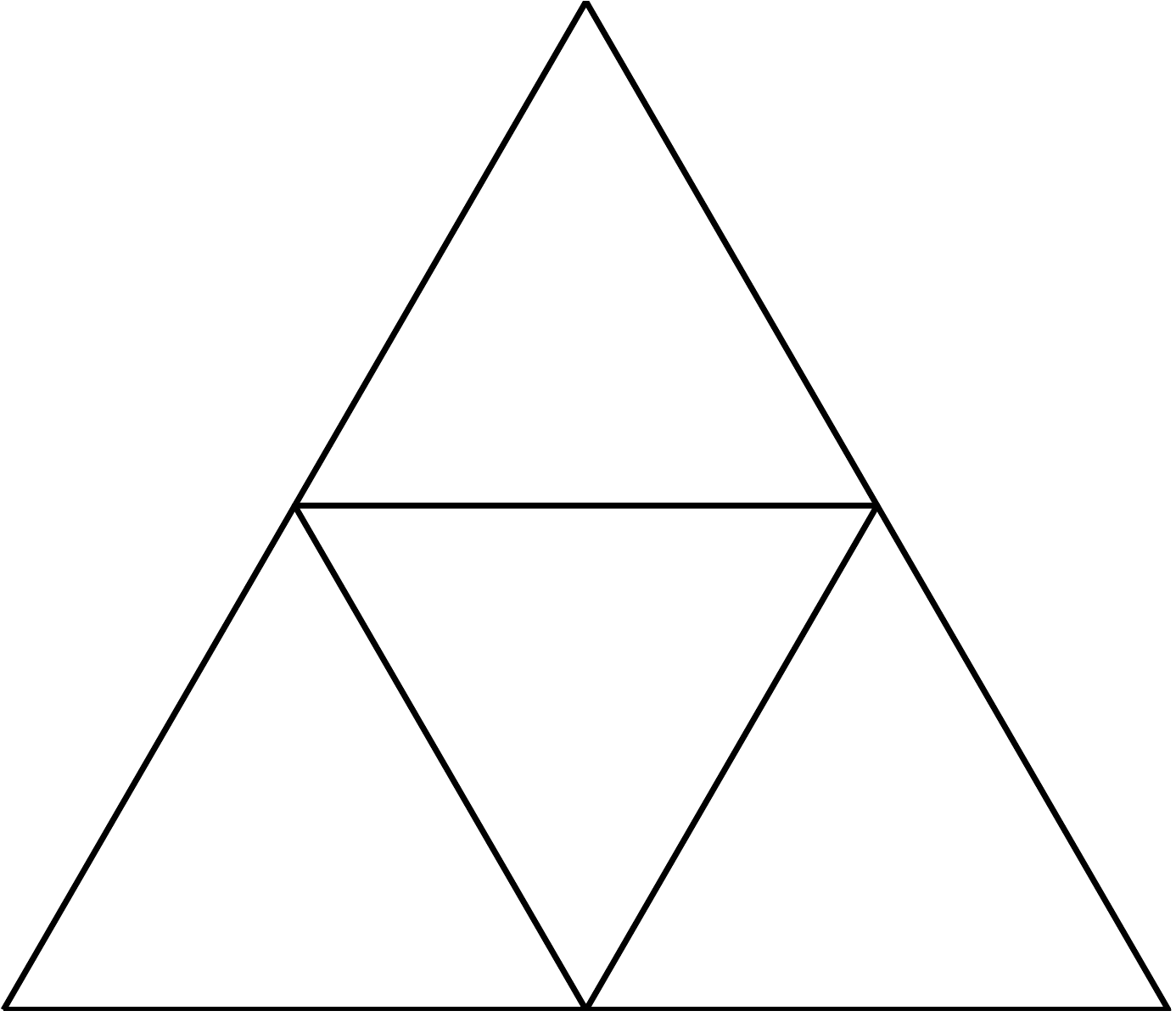}
\subcaption{$k=4$}
\end{center}
\end{minipage}
\end{center}
\caption{Optimal $k$-partitions for the max, $k=2,4$.\label{fig.PartNod}}
\end{figure}

\subsection{Candidates obtained by numerical simulations}

In cases when explicit solutions are not known, some numerical algorithms are available in order to compute minimal partitions for the max. There are two types of methods that have been used in the literature.
\begin{itemize}
\item The first option is to look for partitions which are nodal for a mixed Dirichlet-Neumann problem on $\Omega$ (see for example \cite{BHV}). In this case we are certain to have an equipartition. On the other hand, we force \emph{a priori} the structure of the partition.
\item Another option is to use an iterative approach based on an adaptation of the algorithm introduced in \cite{BouBucOud09}. In \cite{BBN16}, the authors presented an algorithm allowing the minimization of a $p$-norm of eigenvalues, which approaches the optimization problem for the max as $p$ is large. Another variant of the algorithm was presented, which penalizes the difference between eigenvalues on different cells of the partition. These iterative algorithms have the advantage that there is no constraint on the structure of the partition. On the other hand a relaxation method is used in order to compute the eigenvalues, which makes the method less precise.
\end{itemize}

In \cite{BBN16} the authors studied the square, the disk and the equilateral triangle. They used initially the iterative methods to detect the structure of the partition. Secondly, when some parts of the boundaries of the cells of the partitions were segments or easily parametrizable curves, the mixed Dirichlet-Neumann approach was used. Not surprisingly, the Dirichlet-Neumann method yields best results, {\it i.e.} the smallest maximal eigenvalue, when it can be used. We recall below the best partitions obtained with the two above methods when $k=3,5, 6, 7, 8, 9, 10$. 
The smallest energies obtained numerically are summed up in Table \ref{tab.Linfty} and the associated partitions are represented in Figures \ref{fig.triangleinf}, \ref{fig.squareinf} and \ref{fig.disqueinf}. In these figures, the partitions with green lines are obtained with the Dirichlet-Neumann approach whereas the others are the results of the iterative method with penalization.

\begin{table}[h!t]
\begin{center}
\begin{tabular}{|c|c|c|c|c|c|c|c|}
\hline
$k$ & 3 & 5 & 6 & 7 & 8 & 9 & 10\\
\hline
$\triangle$ &  $142.88$ & $251.99$ & $275.97$ & $345.91$ & $389.31$ & $428.75$ & $451.73$ \\
\hline
$\ocircle$ & $20.19$ & $33.21$ & $39.02$ & $44.03$ & $50.46$ & $58.25$ & $67.19$ \\
\hline
$\square$ & $66.58$ & $104.29$ & $127.11$ & $146.88$ & $161.28$ & $178.08$ & $204.54$ \\
\hline
\end{tabular}
\caption{Numerical estimates for $\mathfrak L_{k}(\Omega)$, $\Omega=\triangle, \square, \ocircle$.\label{tab.Linfty}}
\end{center}
\end{table}

\begin{figure}[h!]
\centering
\setlength{\tabcolsep}{1pt}
\begin{tabular}{ccccccc}
\includegraphics[width=0.138\textwidth]{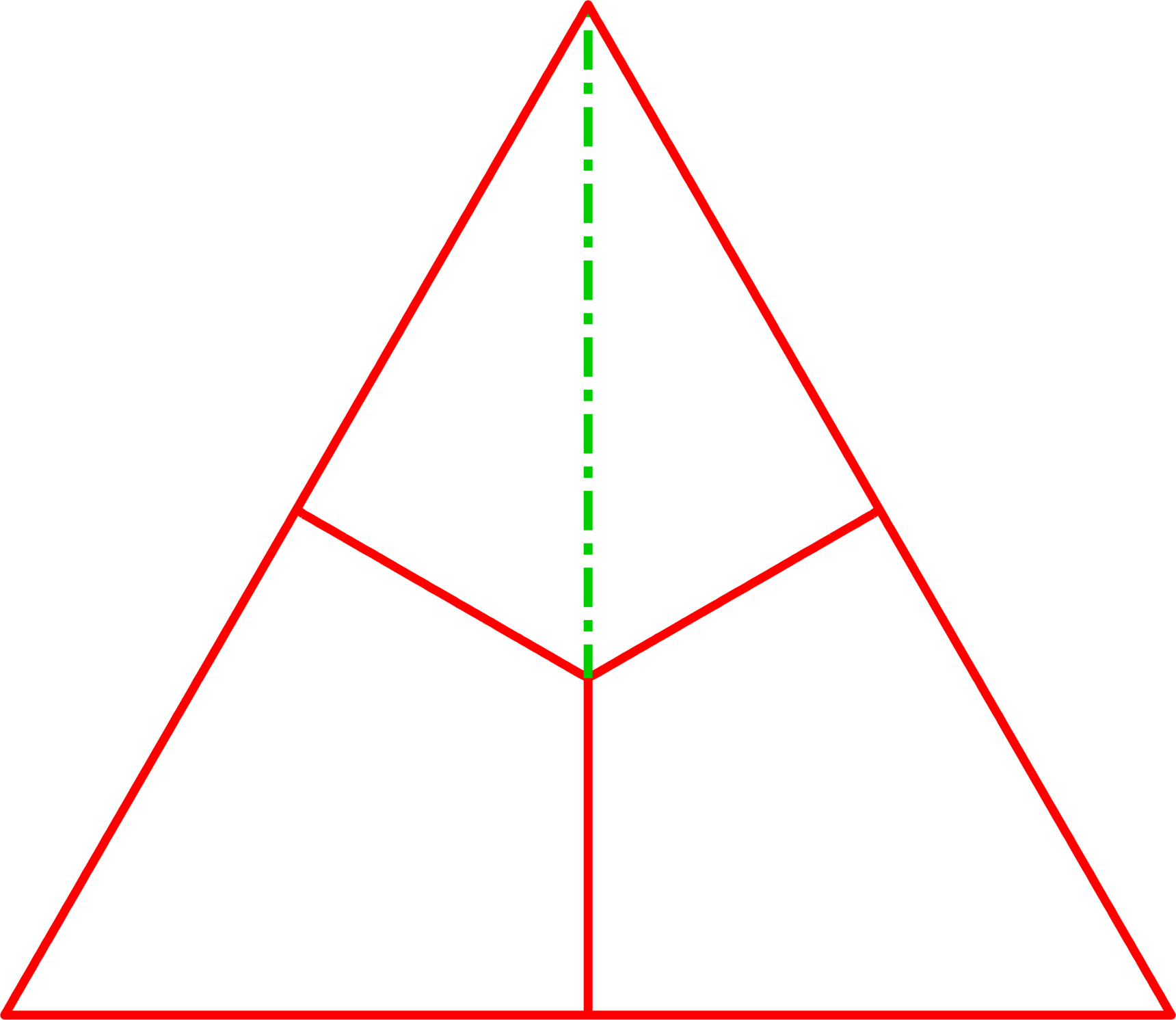} 
&\includegraphics[width =0.138\textwidth]{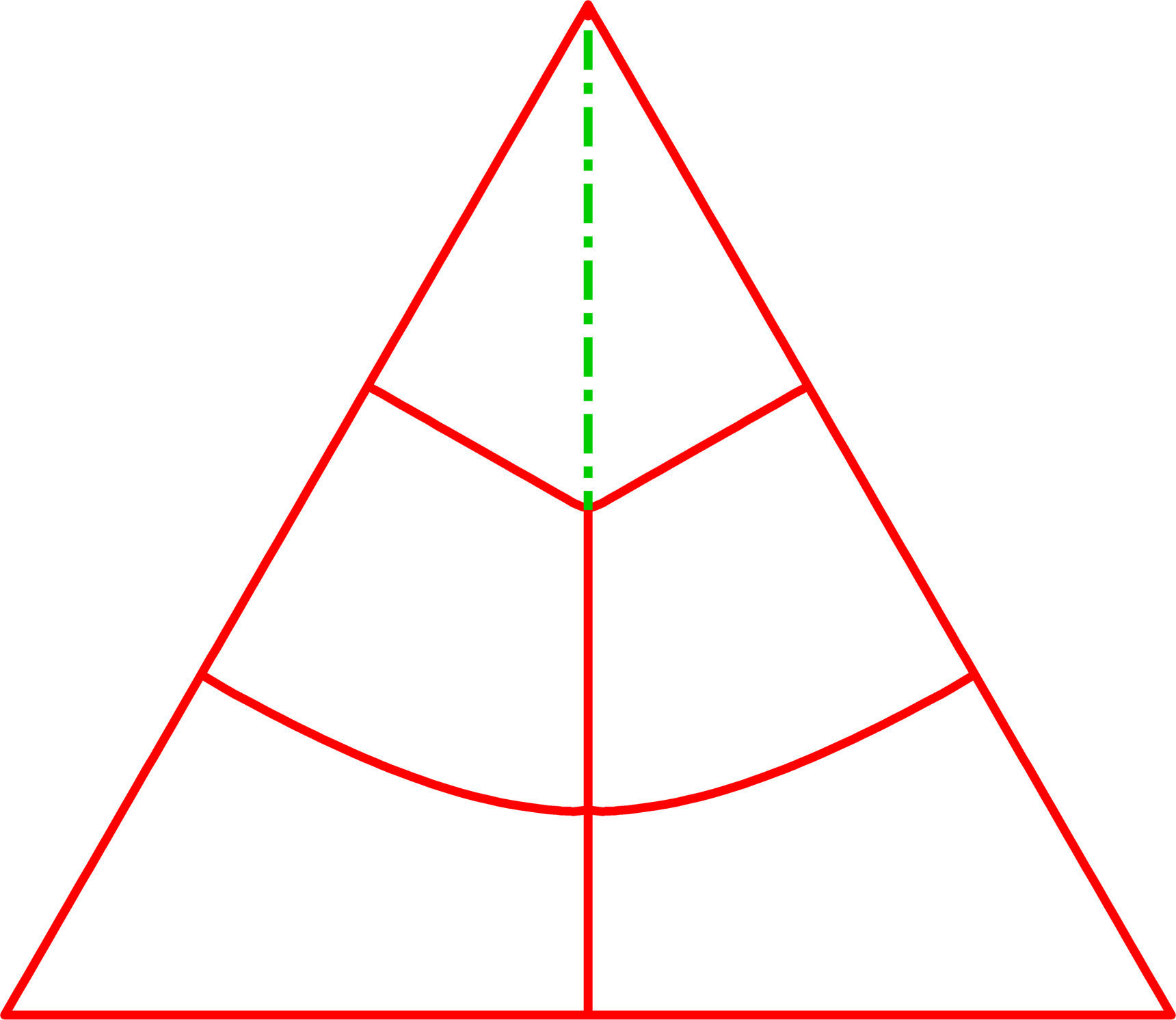}
&\includegraphics[width =0.138\textwidth]{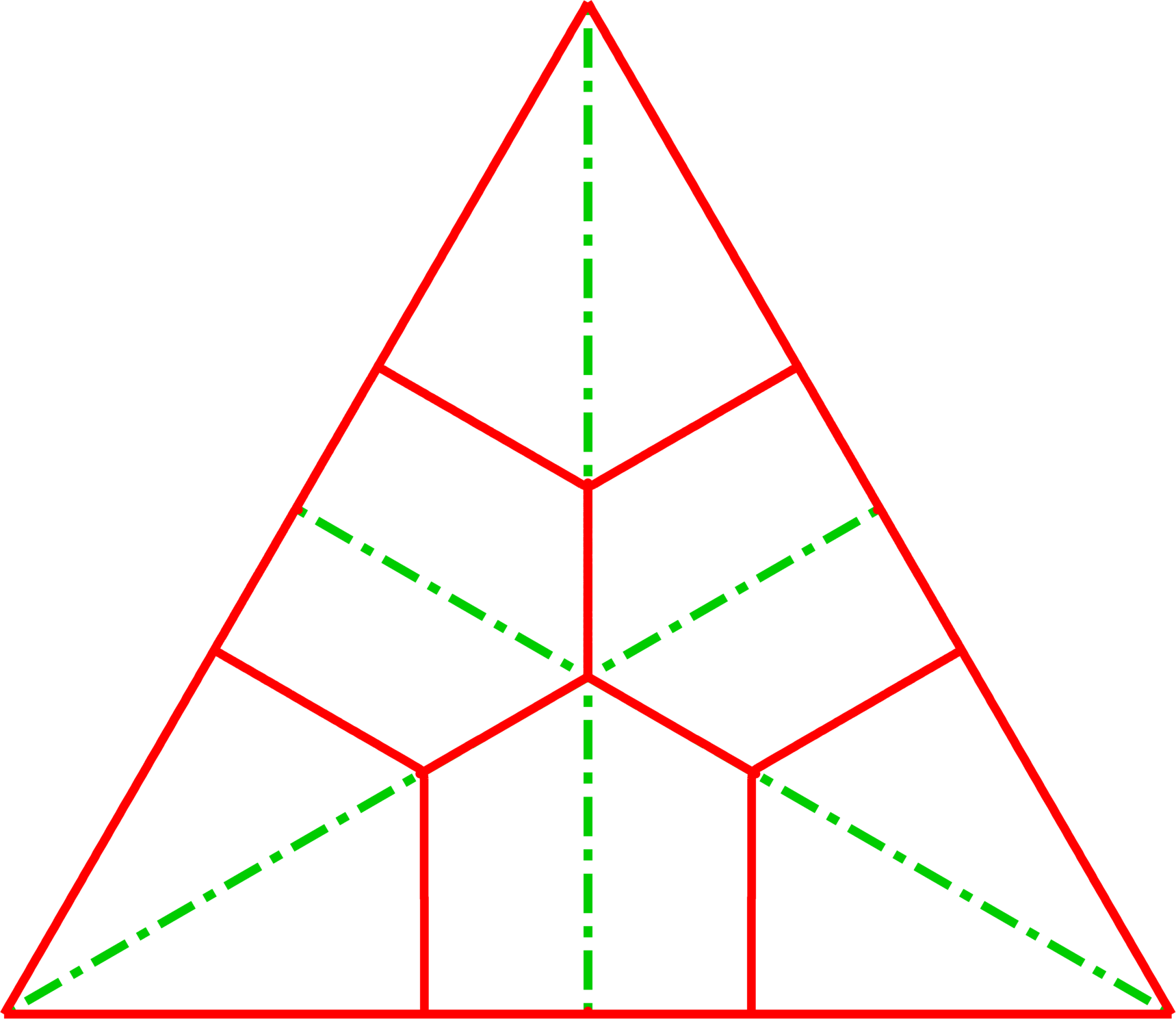}
&\includegraphics[width =0.138\textwidth]{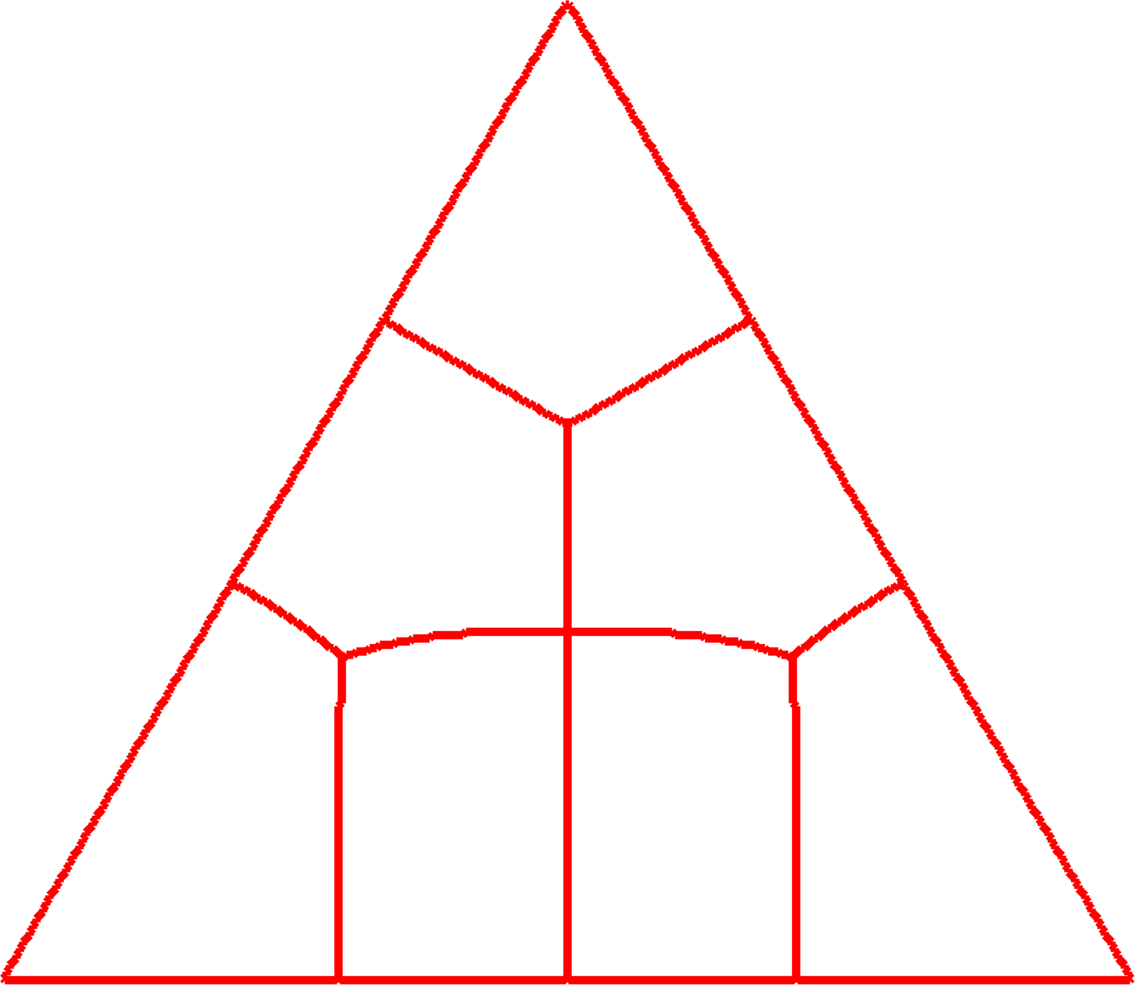}
&\includegraphics[width =0.138\textwidth]{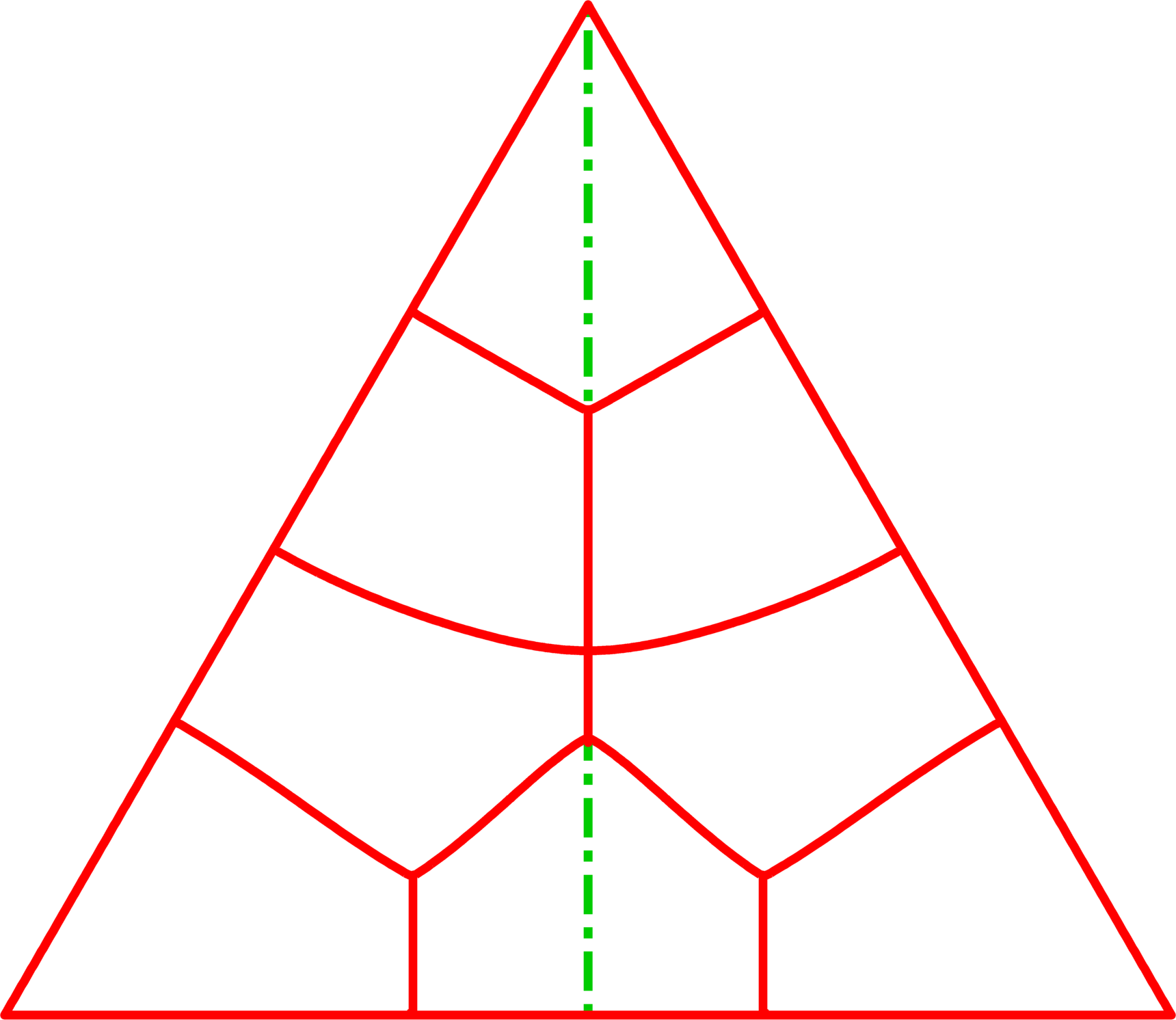}
&\includegraphics[width =0.138\textwidth]{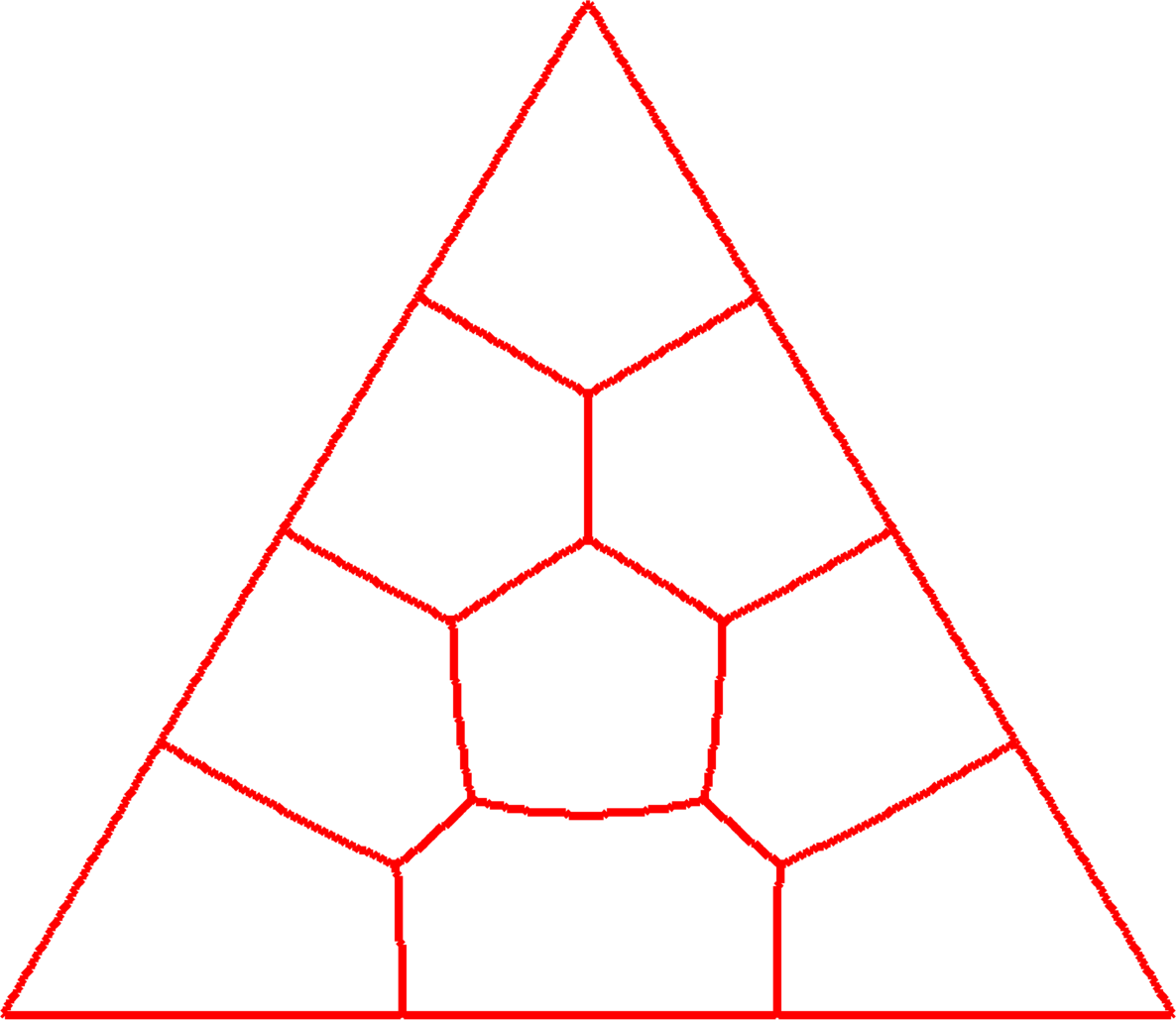}
&\includegraphics[width =0.138\textwidth]{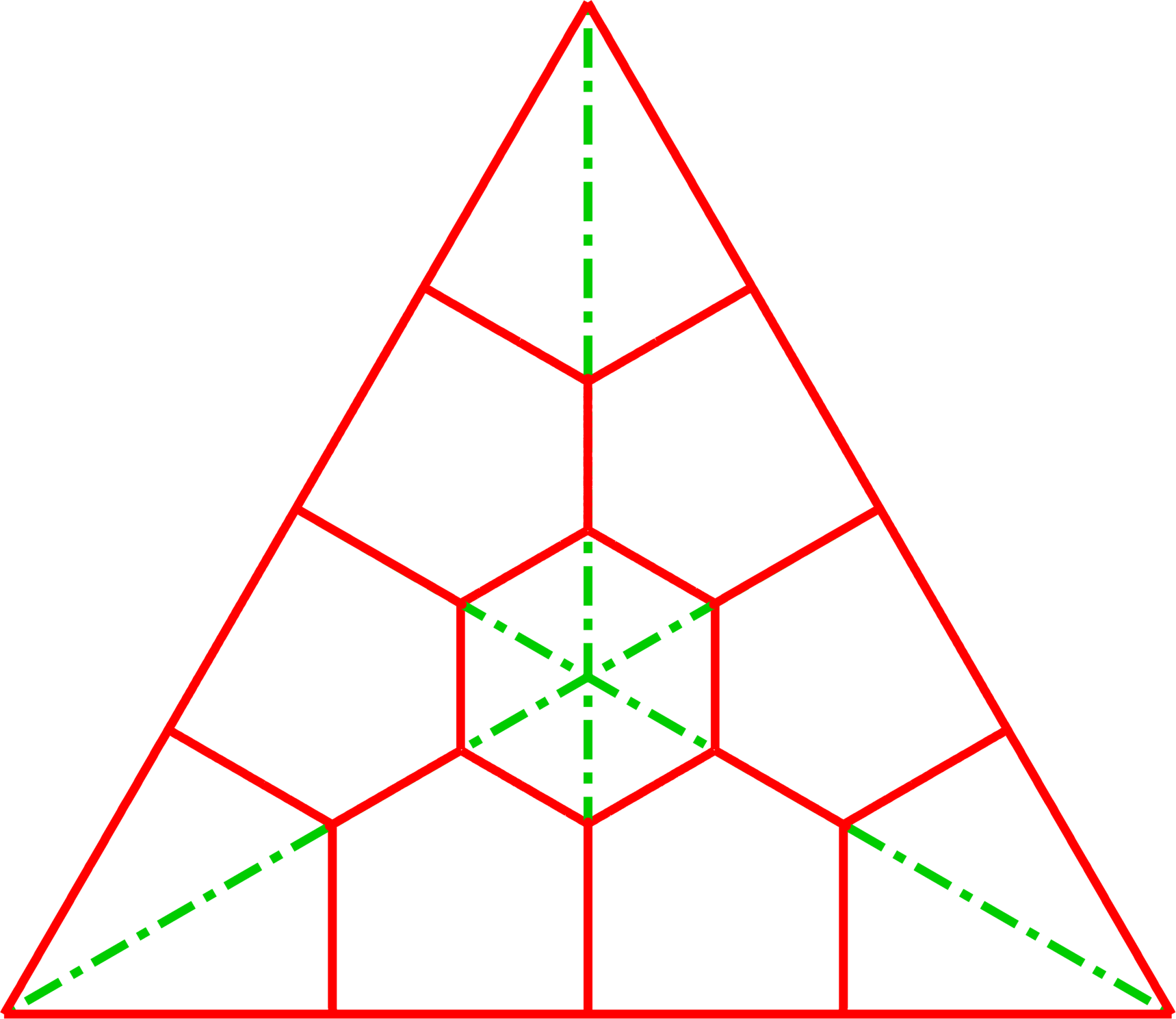}\\
$k=3$ & $k=5$ & $k=6$ & $k=7$ & $k=8$ & $k=9$ & $k=10$
\end{tabular}
\caption{Candidates to be minimal $k$-partition of the equilateral triangle for the max.}
\label{fig.triangleinf}
\end{figure}

\begin{figure}[h!]
\centering
\setlength{\tabcolsep}{1pt}
\begin{tabular}{ccccccc}
\includegraphics[width = 0.138\textwidth,angle=180,origin=c]{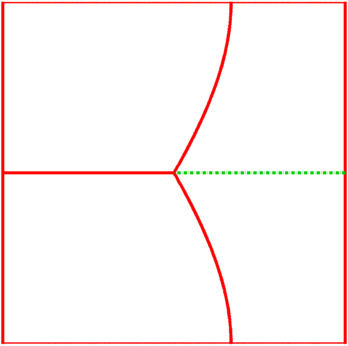}
&\includegraphics[width= 0.138\textwidth]{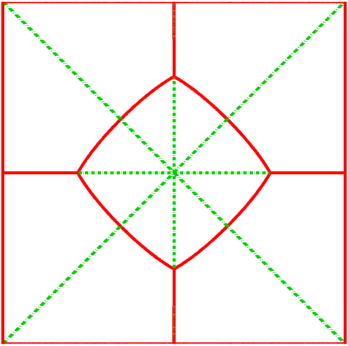}
&\includegraphics[width= 0.138\textwidth]{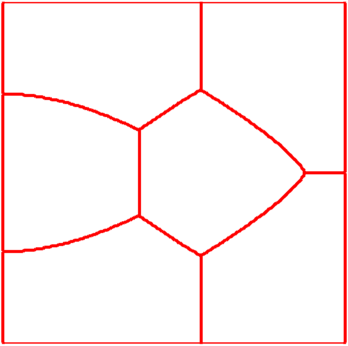}
&\includegraphics[width= 0.138\textwidth]{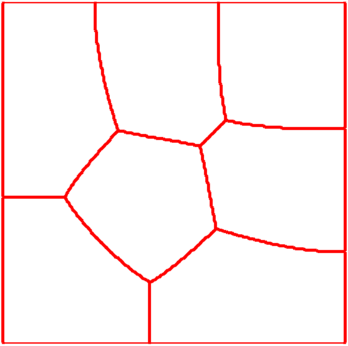}
&\includegraphics[width= 0.138\textwidth]{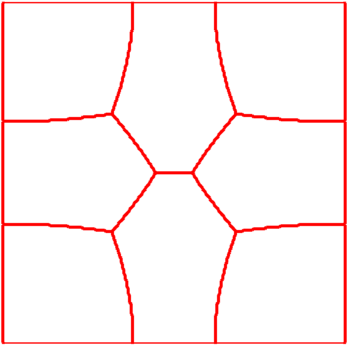}
&\includegraphics[width= 0.138\textwidth]{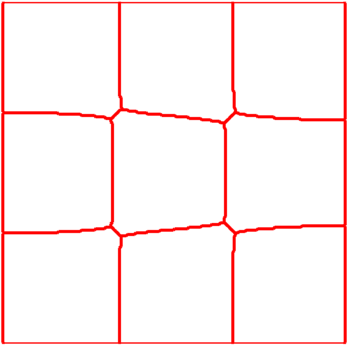}
&\includegraphics[width= 0.138\textwidth]{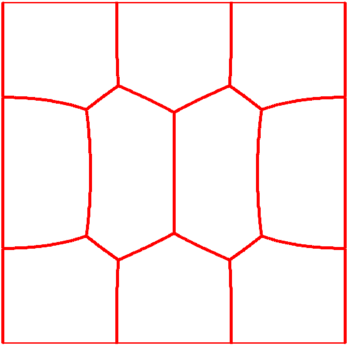}\\
$k=3$ & $k=5$ & $k=6$ & $k=7$ & $k=8$ & $k=9$ & $k=10$
\end{tabular}
\caption{Candidates to be minimal $k$-partition of the square for the max.}
\label{fig.squareinf}
\end{figure}

\begin{figure}[h!]
\centering
\setlength{\tabcolsep}{1pt}
\begin{tabular}{ccccccc}
\includegraphics[width=0.138\textwidth]{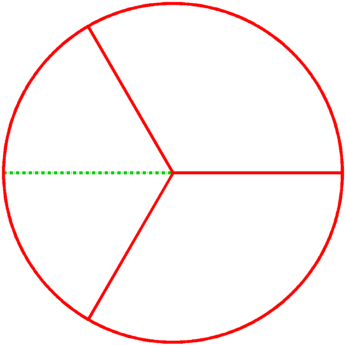}
& \includegraphics[width=0.138\textwidth]{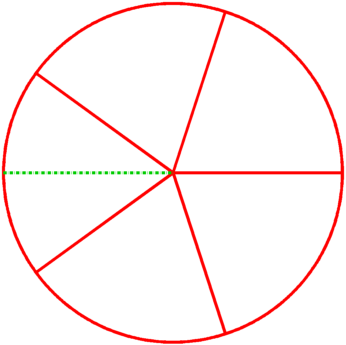}
& \includegraphics[width=0.138\textwidth]{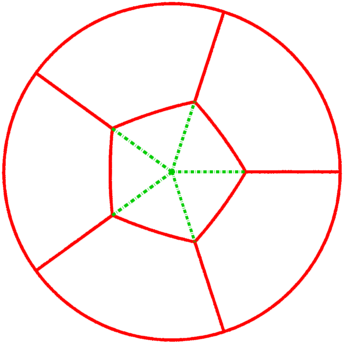}
& \includegraphics[width=0.138\textwidth]{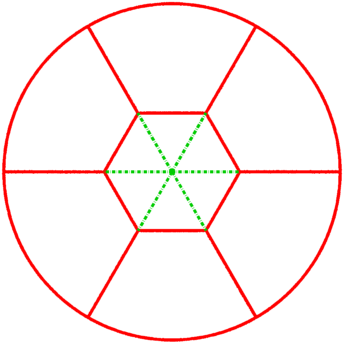}
& \includegraphics[width=0.138\textwidth]{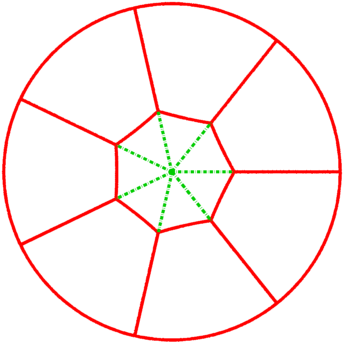}
& \includegraphics[width=0.138\textwidth]{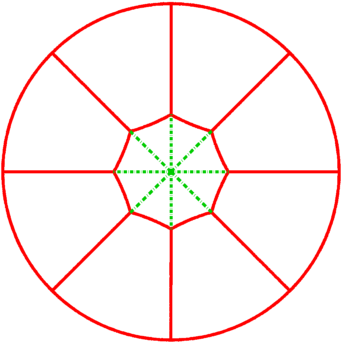}
&\includegraphics[width =0.138\textwidth]{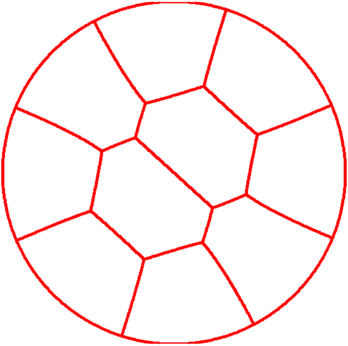}\\
$k=3$ & $k=5$ & $k=6$ & $k=7$ & $k=8$ & $k=9$ & $k=10$
\end{tabular}
\caption{Candidates to be minimal $k$-partitions of the disk for the max.}
\label{fig.disqueinf}
\end{figure}

\section{Optimal partitions for the max as candidates for the sum\label{sec.compar}} 
The aim of this section is to analyze if the optimal partitions obtained numerically for the max can be optimal for the sum.
Let us recall a criterion established  in \cite{HHO10} which prevents a partition optimal for the max to be optimal for the sum. 
\begin{prop}\label{l2norm}
Let ${\mathcal D}=(D_1,D_{2})$ be a minimal $2$-partition for the max and $\varphi_{2}$ be a second eigenfunction  of the Dirichlet-Laplacian on $\Omega$ having $ D_1$ and $ D_2$ as nodal domains. 
\begin{equation}\label{eq.criter}
\mbox{Suppose that }
\int_{D_1} |\varphi_{2}|^2 \neq \int_{D_2}|\varphi_{2}|^2,\qquad \mbox{ then }\quad
\mathfrak L_{2,1}(\Omega) < \mathfrak L_{2} (\Omega).\hfill\ 
\end{equation}
\end{prop}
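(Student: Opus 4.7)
The plan is to use a Hadamard-type shape perturbation of the common boundary $\Gamma = (\partial D_1 \cap \partial D_2) \cap \Omega$ and exploit the mismatch of $L^2$-masses to produce a competitor with strictly smaller sum energy. Since $\mathcal D$ is a max-minimal 2-partition given by nodal domains of $\varphi_2$, we have $\lambda_1(D_1) = \lambda_1(D_2) = \lambda_2(\Omega) = \mathfrak L_2(\Omega)$ (by Theorem~\ref{thm.ex} and Theorem~\ref{thm.HHOT}), so the sum energy of $\mathcal D$ itself equals $\mathfrak L_2(\Omega)$. It thus suffices to construct a small perturbation of $\mathcal D$ for which the sum of the first Dirichlet eigenvalues strictly decreases.

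Let $n$ be the unit normal to $\Gamma$ pointing from $D_1$ to $D_2$, and let $V \in C_c^\infty(\Gamma^{\mathrm{reg}})$ be a normal velocity supported on the regular part of $\Gamma$ (away from the critical set of $\varphi_2$ on its nodal set). Consider the family $\mathcal D_t = (D_1(t), D_2(t))$ obtained by deforming $\Gamma$ along the vector field $Vn$ for small $t \geq 0$. Writing $u_i = \varphi_2|_{D_i}/\sqrt{\alpha_i}$ with $\alpha_1 = \int_{D_1}|\varphi_2|^2$ and $\alpha_2 = \int_{D_2}|\varphi_2|^2$, the Hadamard shape derivative formula yields
\begin{equation*}
\frac{d}{dt}\lambda_1(D_1(t))\Big|_{t=0} = -\int_\Gamma |\partial_n u_1|^2 V\,d\sigma, \qquad \frac{d}{dt}\lambda_1(D_2(t))\Big|_{t=0} = +\int_\Gamma |\partial_n u_2|^2 V\,d\sigma.
\end{equation*}
Since $\varphi_2$ is smooth across $\Gamma$, the common quantity $g := \partial_n \varphi_2$ satisfies $|\partial_n u_i|^2 = g^2/\alpha_i$, so the sum derivative becomes
\begin{equation*}
\frac{d}{dt}\,\frac{\lambda_1(D_1(t))+\lambda_1(D_2(t))}{2}\bigg|_{t=0} \;=\; \frac{\alpha_1 - \alpha_2}{2\,\alpha_1\alpha_2}\int_\Gamma g^2\, V\,d\sigma.
\end{equation*}

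Under the hypothesis $\alpha_1 \neq \alpha_2$, the prefactor is a nonzero constant. The key fact is that $g \not\equiv 0$ on $\Gamma^{\mathrm{reg}}$: indeed, if $\partial_n \varphi_2$ vanished on a relatively open portion of $\Gamma$, then $\varphi_2$ together with its normal derivative would vanish there, contradicting unique continuation for the eigenvalue equation. Consequently one can pick $V \geq 0$ (or $V \leq 0$) supported where $g^2 > 0$, with the sign chosen so that the right-hand side above is strictly negative. Then for small $t > 0$, $\mathcal D_t$ is an admissible 2-partition of $\Omega$ with
\begin{equation*}
\tfrac12\bigl(\lambda_1(D_1(t))+\lambda_1(D_2(t))\bigr) < \tfrac12\bigl(\lambda_1(D_1)+\lambda_1(D_2)\bigr) = \mathfrak L_2(\Omega),
\end{equation*}
which gives $\mathfrak L_{2,1}(\Omega) < \mathfrak L_2(\Omega)$.

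The main technical obstacle is justifying the Hadamard derivative formula in a setting where $\Gamma$ may carry isolated singularities (critical points of $\varphi_2$ on its zero set). This is dealt with by restricting the deformation to the regular part of the nodal set, using analyticity of $\varphi_2$ in $\Omega$ to ensure the singular set has codimension at least two and can be bypassed by a compactly supported velocity $V$. The rest of the argument, namely differentiability of simple eigenvalues under smooth Dirichlet domain perturbations, is standard.
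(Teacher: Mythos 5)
The paper does not prove Proposition~\ref{l2norm}; it only recalls it from \cite{HHO10}, where the argument is precisely the Hadamard nodal-line perturbation you give: the normalized eigenfunctions $u_i=\varphi_2/\sqrt{\alpha_i}$ have mismatched normal derivatives on $\Gamma$ exactly when $\alpha_1\neq\alpha_2$, so the shape derivative of the sum can be made strictly negative while the unperturbed sum equals $\lambda_2(\Omega)=\mathfrak L_2(\Omega)$. Your write-up is correct, including the sign bookkeeping and the localization of the deformation to the regular part of the nodal set (where in fact $\partial_n\varphi_2$ never vanishes, since the tangential gradient is zero and regular points are by definition non-critical, so the unique-continuation step is not even needed).
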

Since any minimal $2$-partition for the max is nodal, the previous criteria can be generalized by considering two neighbors $D_{1},D_{2}$ of an optimal partition for the max. 
Note that this criterion gives no information if we apply it to an optimal partition for the max which is composed of congruent domains. 
It seems to be the case when $k=2,4$ for the square, $k=2,3,4,5$ for the disk and $k=3,4$ for the equilateral triangle  (see Figures~\ref{fig.PartNod}--\ref{fig.disqueinf}).

We know that partitions minimal for the max are in fact equipartitions (see Theorem~\ref{thm.ex}). Yet the Dirichlet-Neumann approach produces equipartition whereas with the iterative method, the first eigenvalues on the subdomains of the partition obtained numerically are close but not rigorously equal. 
Thus we consider from now in this section the partitions obtained with the Dirichlet-Neumann approach. 
In such situation, Proposition \ref{l2norm} can be adapted as follows.
\begin{prop}
Let $\mathcal{D} = (D_i)_{1\leq i \leq k}$ be a minimal $k$-partition for the max. We assume that $\mathcal D$ is the nodal partition of an eigenfunction $\varphi$ of a mixed Dirichlet-Neumann problem. Then this partition is not minimal for the sum if we can find two neighboring cells on which the $L^2$ norms of the eigenfunction $\varphi$ are different. 
\label{l2normappl}
\end{prop}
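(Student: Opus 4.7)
The plan is to generalize the $k=2$ argument of Proposition~\ref{l2norm} by performing a local shape derivative at the shared interface between the two distinguished neighboring cells $D_1, D_2$. Let $\Gamma$ denote a regular open piece of $\overline{D_1} \cap \overline{D_2}$, equipped with a unit normal $\nu$ pointing from $D_1$ into $D_2$. A first preliminary step is to identify $\varphi$, up to normalization, with the first Dirichlet eigenfunction on each cell: since $D_i$ is a nodal domain of $\varphi$, the restriction $\varphi|_{D_i}$ does not change sign, and by the ground-state characterization it must coincide with a positive multiple of the normalized first Dirichlet eigenfunction $u_i$ of $D_i$; combined with the equipartition property from Theorem~\ref{thm.ex}, this gives $u_i = \varphi|_{D_i}/\|\varphi\|_{L^2(D_i)}$ with $\lambda_1(D_i)=\lambda$ for the common value $\lambda$.

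The main step I would then carry out is to deform $\Gamma$ by a smooth scalar velocity $g \in C_c^\infty(\Gamma)$, supported in the regular part of $\Gamma$ and away from $\partial\Omega$, while leaving the remaining cells $(D_i)_{i\geq 3}$ unchanged. This yields a family of admissible partitions $(D_1^t, D_2^t, D_3,\ldots,D_k)$ for small $t$. Hadamard's formula for the first Dirichlet eigenvalue gives, with the convention that $g>0$ corresponds to motion in direction $\nu$ (so $D_1$ grows and $D_2$ shrinks),
\[
\frac{d}{dt}\Big|_{t=0}\bigl(\lambda_1(D_1^t) + \lambda_1(D_2^t)\bigr) = \int_\Gamma \bigl(|\partial_\nu u_2|^2 - |\partial_\nu u_1|^2\bigr)\, g \, d\sigma.
\]
Substituting $u_i=\varphi|_{D_i}/\|\varphi\|_{L^2(D_i)}$ and using that $\varphi$ is $C^1$ across any regular point of $\Gamma$ (so that $|\partial_\nu \varphi|$ takes a single well-defined value there), this variation becomes
\[
\int_\Gamma |\partial_\nu \varphi|^2 \left(\frac{1}{\|\varphi\|_{L^2(D_2)}^2} - \frac{1}{\|\varphi\|_{L^2(D_1)}^2}\right) g \, d\sigma.
\]
By hypothesis the parenthesized coefficient is a nonzero constant of fixed sign, and by Hopf's lemma $|\partial_\nu\varphi|^2 > 0$ everywhere on the regular part of $\Gamma$; picking $g$ with the opposite sign makes the first variation strictly negative. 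For $t>0$ small, the perturbed partition then gives a strictly smaller value of $\tfrac{1}{k}\sum_{i=1}^k\lambda_1(D_i^t)$, proving that $\mathcal{D}$ is not minimal for the sum.

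The hardest part will be the rigorous application of Hadamard's formula in this setting. The velocity $g$ must vanish near the singular points where several half-curves of the partition boundary meet with equal angles (Theorem~\ref{thm.ex}) and near $\partial\Omega$, so that the equal-angle meeting property is preserved and the outer cells are genuinely unaffected. A secondary technical point is the identification of $u_i$ with the normalized restriction of $\varphi$: in the mixed Dirichlet--Neumann constructions of Section~\ref{sec.cmax}, this is built into the construction, since by design each cell's first eigenvalue (with the boundary conditions it inherits from the mixed problem on $\Omega$) equals $\lambda$ and is realized by $\varphi|_{D_i}$.
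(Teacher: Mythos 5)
Your proof is essentially correct, but it takes a genuinely different route from the paper. The paper's proof is a two-line reduction to Proposition~\ref{l2norm}: since $\mathcal{D}$ is minimal for the max, the pair $(D_1,D_2)$ must be a minimal $2$-partition for the max of ${\rm Int}(\overline{D_1}\cup\overline{D_2})$ (otherwise one could improve $\mathcal{D}$ itself), and the restriction of $\varphi$ to that domain is a second eigenfunction having $D_1$ and $D_2$ as nodal domains; Proposition~\ref{l2norm} then applies verbatim, and the non-optimality of $(D_1,D_2)$ for the sum on the union immediately propagates to $\mathcal{D}$ on $\Omega$. You instead re-derive the underlying criterion from scratch via a Hadamard shape-derivative computation at the common interface, which is in substance the proof of Proposition~\ref{l2norm} given in \cite{HHO10}. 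What your route buys is self-containedness and an explicit descent direction; what it costs is the technical overhead (differentiability of $\lambda_1$ under the boundary perturbation, choice of a regular piece $\Gamma$ away from the singular points and from $\partial\Omega$) that the paper sidesteps entirely by quoting the $k=2$ result as a black box. One point you should make explicit: the factorization $|\partial_\nu u_2|^2-|\partial_\nu u_1|^2=|\partial_\nu\varphi|^2\bigl(\|\varphi\|_{L^2(D_2)}^{-2}-\|\varphi\|_{L^2(D_1)}^{-2}\bigr)$ requires $\varphi$ to be $C^1$ across $\Gamma$, i.e. $\Gamma$ must be a genuine nodal line of $\varphi$ and not part of an artificial interior Dirichlet crack of the mixed problem, across which the two one-sided normal derivatives need not agree; the analogous hypothesis is implicitly used in the paper's argument when it asserts that the restriction of $\varphi$ to ${\rm Int}(\overline{D_1}\cup\overline{D_2})$ is an eigenfunction there.
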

\begin{proof}
It is not difficult to see that the previous result is a simple consequence of Proposition \ref{l2norm}. Indeed, if $D_1,D_2$ are two neighbors of a nodal partition associated to a mixed Dirichlet-Neumann problem, then $(D_1,D_2)$ is a minimal partition for the max on ${\rm Int}(\overline{D_1} \cup \overline{D_2})$. If this is not the case, then $\mathcal{D}$ would not be minimal. Furthermore, the restriction of the eigenfunction $\varphi$ to ${\rm Int}(\overline{D_1} \cup \overline{D_2})$ is an eigenfunction for the second eigenvalue on the same domain ${\rm Int}(\overline{D_1} \cup \overline{D_2})$. Thus, if the $L^2$ norms of $\varphi$ are different on $D_1$ and $D_2$, we may apply Proposition \ref{l2norm} to conclude that $(D_1,D_2)$ is not optimal for the sum on ${\rm Int}(\overline{D_1} \cup \overline{D_2})$. This immediately implies that $\mathcal{D}$ is not optimal for the sum on $\Omega$. 
\end{proof}
 
Let us recall that the mixed Dirichlet-Neumann method consists of expressing the minimal partition for the max as a nodal partition corresponding to a mixed problem. When we look for symmetric partitions, we consider a mixed Dirichlet-Neumann problem on a reduced domain and we search the eigenfunction which realizes the minimal energy in the reduced domain, taking care that the associated nodal partition has the desired structure. In the following we use the results of Proposition \ref{l2normappl} which allows us to deduce if the partition can be a candidate for the sum by looking at the $L^2$ norms of the eigenfunction of the mixed problem on the subdomains. This simplifies the computation, since we are not forced to extract each pair of neighbor domains and compute a new second eigenfunction on these domains. Given the eigenfunction of the mixed problem we can identify each of the subdomains by looking at its sign of restrictions to certain rectangles: $\{\varphi>0 \text{ or }\varphi<0\} \cap \{x \in [a,b]\}\cap \{y \in [c,d]\}$. Once we have computed the $L^2$ norms on subdomains corresponding to the mixed problem, we may multiply the norms corresponding to domains cut by symmetry axes by a corresponding factor, to find the $L^2$ norms on the initial partition.  The computation of the $L^2$ norm is made either in FreeFem++ \cite{freefem} or in Melina \cite{melina}.

Let us now examine the candidates obtained with the Dirichlet-Neumann approach: $k \in \{6,7,8,9\}$ for the disk, $k \in \{3,5\}$ for the square and $k \in \{5,6,8,10\}$ for the equilateral triangle. These configurations correspond in Figures~\ref{fig.triangleinf}, \ref{fig.squareinf} and \ref{fig.disqueinf} with green straight doted lines for which the subdomains are not congruent. We present the $L^2$ norms on each one of the subdomains. The situation is clear in the case of the disk and the square where we have only two types of domains: a single domain $D_{1}$ (the interior domain in the case of the square with $k=5$ or the disk) and a domain $D_{2}$ repeated several times by some symmetry (the exterior domains for the square with $k=5$ or the disk). Table \ref{tab.cardis} shows that the $L^2$ norm of the eigenfunctions on the different subdomains are not equal. Thus, in the case of the disk and $k=5,6,7,8$ or the square and $k=3,5$, if the partitions of Figures~\ref{fig.squareinf} and \ref{fig.disqueinf} are optimal for the max, they are not optimal for the sum.
\begin{table}[h!t]
\begin{center}
\begin{tabular}{|c|c|c|c|c|c|c|c|}
\hline
$\Omega$ & \multicolumn{2}{c|}{Square} & \multicolumn{4}{c|}{Disk}\\
\hline
$k$ & 3 & 5 & 6 & 7 & 8 & 9\\
\hline
$\int_{D_{1}}|\varphi|^2$ &  0.51 & 1.12 & 1.12 & 0.88 & 0.58 & 0.29\\
$\int_{D_{2}}|\varphi|^2$ & 0.75 & 0.72 & 0.78 & 0.85 & 0.92 & 0.96\\
\hline
\end{tabular}
\end{center}
\caption{$L^2$-norm on the square and the disk.\label{tab.cardis}}
\end{table}

In the case of the equilateral triangle we may have up to $5$ different domains in the partition. We denote these subdomains $D_1,D_2,...$ by starting from a vertex of the triangle and going along the sides in a clockwise rotation sense. In the following we write the values of the $L^2$ norms on each one of these domains for $k=2,5,8,6,10$:\renewcommand\labelitemi{{\boldmath$\cdot$}}
\begin{itemize}[leftmargin=*]
\item $k=2$:  $\int_{D_1} |\varphi|^2 = 0.49,\ \int_{D_2}|\varphi|^2 = 0.51$;
\item $k=5$: $\int_{D_1}|\varphi|^2 = 0.45,\ \int_{D_2} |\varphi|^2 = 0.44,\ \int_{D_3}|\varphi|^2 =0.34$;
\item $k=8$: $\int_{D_1}|\varphi|^2 = 0.31,\ \int_{D_2}|\varphi|^2 = 0.30,\ \int_{D_3} |\varphi|^2=0.22,\ \int_{D_4}|\varphi|^2 = 0.17,\ \int_{D_5}|\varphi|^2 =0.23$;
\item $k=6$: $\int_{D_1}|\varphi|^2  = 0.500,\ \int_{D_2} |\varphi|^2 = 0.500$;
\item $k=10$: $\int_{D_1}|\varphi|^2 = 0.599,\ \int_{D_2}|\varphi|^2 = 0.600,\ \int_{D_3}|\varphi|^2 = 0.604$. 
\end{itemize}
In each of these cases $k\in\{2,5,8\}$, we find at least one pair of adjacent domains which have different $L^2$ norm, so these candidates are not optimal for the sum. Note that the case $k=2$ has already been treated in \cite{HHO10}.
In the cases $k\in \{6,10\}$ we do not find adjacent domains with significantly different $L^2$ norms, so the criterion does not apply. Moreover, a parametric study was done in \cite{BBN16}, assuming the borders of the cells are polygonal domains. This study shows that the partitions for the sum and for the max likely coincide in these cases.

\section{Candidates for the sum\label{sec.sum}}

\subsection{Iterative method}
Let us apply the iterative method to exhibit candidates for the sum. In Figures~\ref{fig.equisummax}, \ref{fig.disksummax} and \ref{fig.carresummax}, we compare these candidates for the sum (represented in blue) with the candidates for the max (in red) obtained by the iterative method with penalization. 
\begin{figure}[h!t]
\centering
\setlength{\tabcolsep}{1pt}
\begin{tabular}{cccc}
\includegraphics[width=0.19\textwidth]{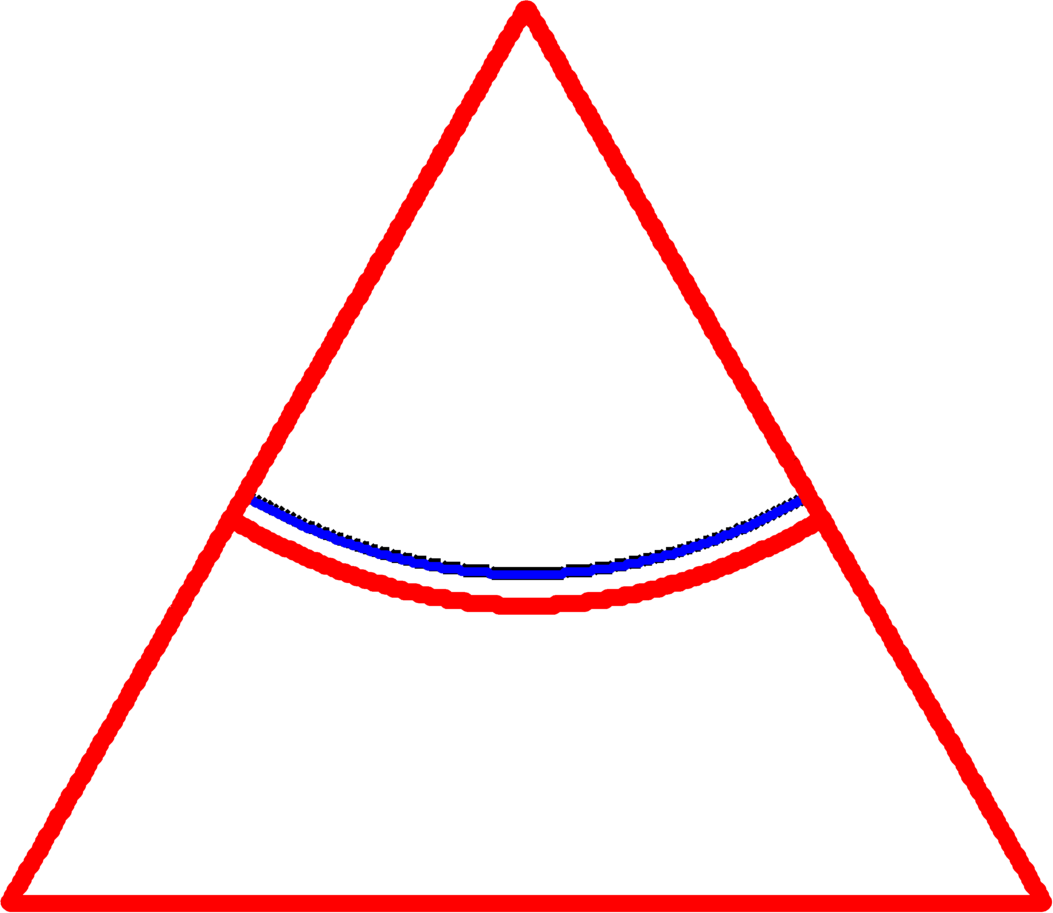}
& \includegraphics[width=0.19\textwidth]{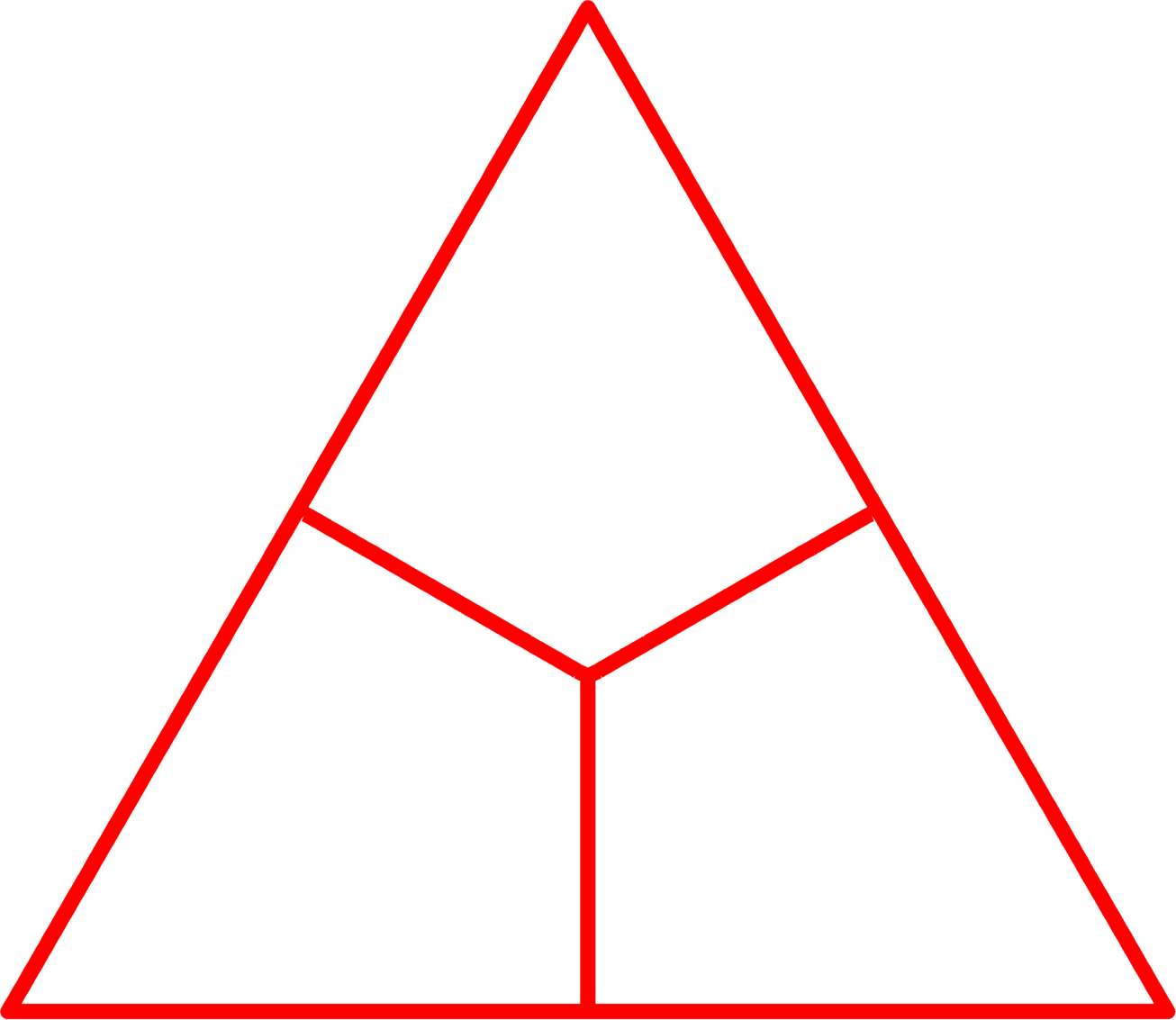}
& \includegraphics[width=0.19\textwidth]{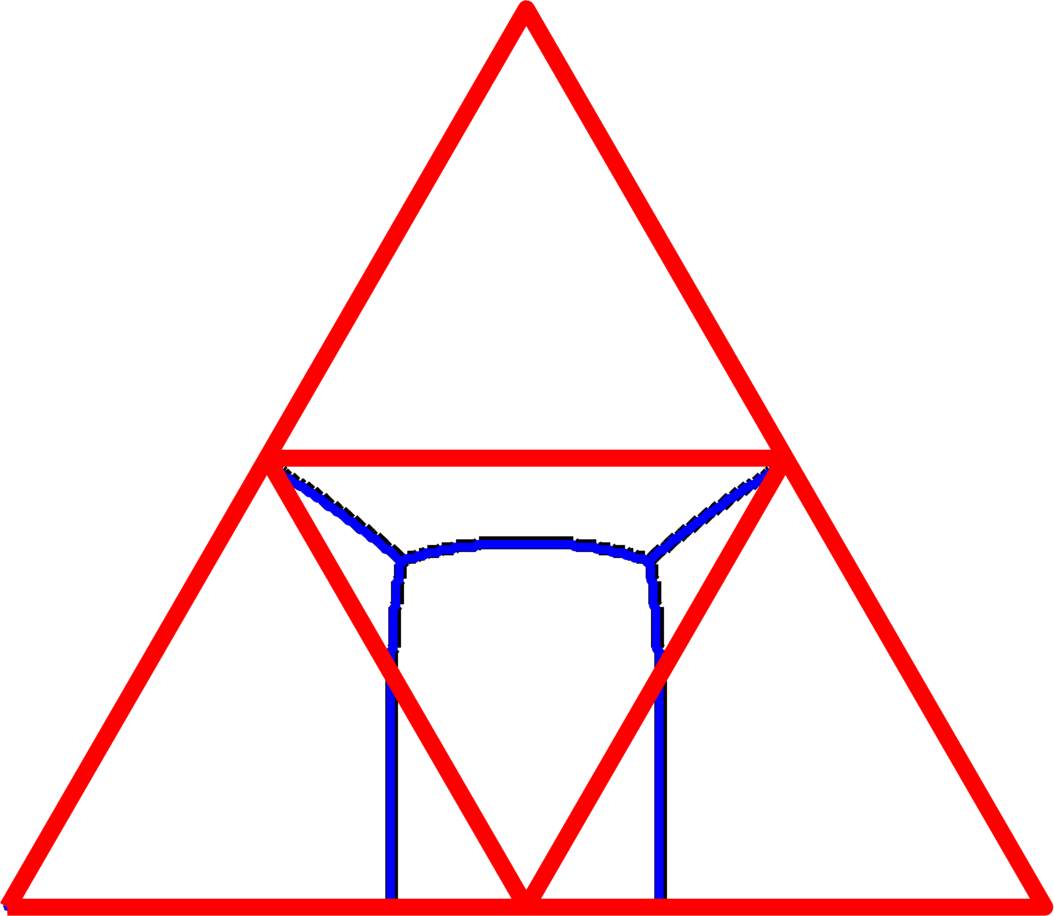}
& \includegraphics[width=0.19\textwidth]{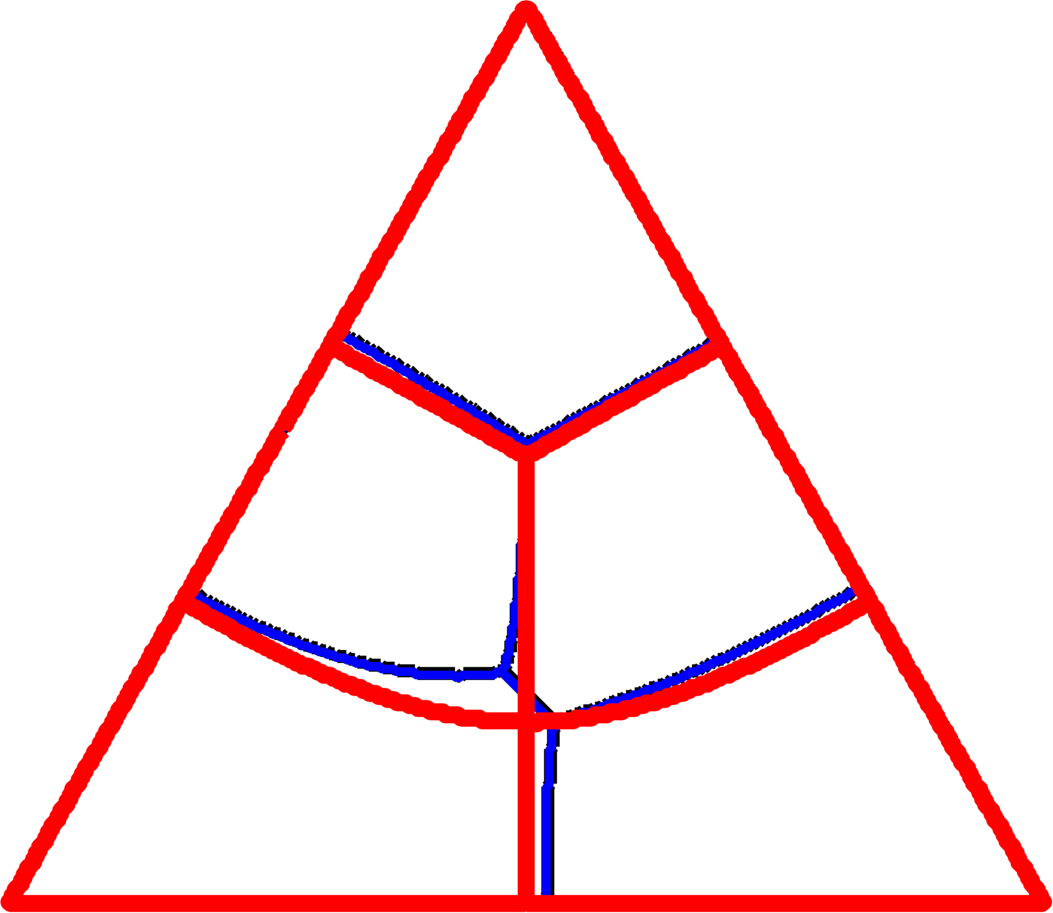}
\\
$k=2$ & $k=3$ & $k=4$ & $k=5$ \\
\end{tabular}
\begin{tabular}{ccccc}
\includegraphics[width=0.19\textwidth]{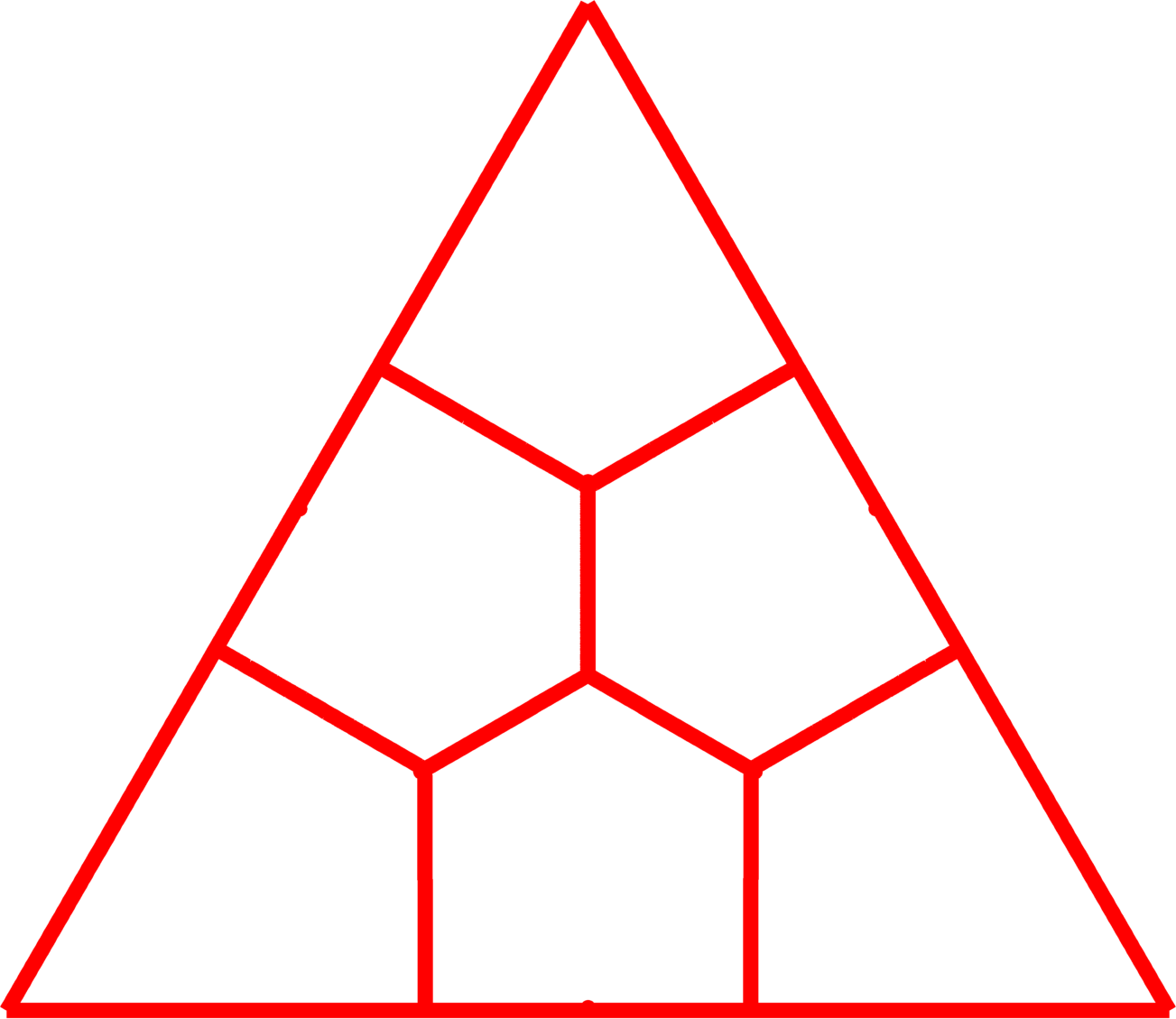}
& \includegraphics[width=0.19\textwidth]{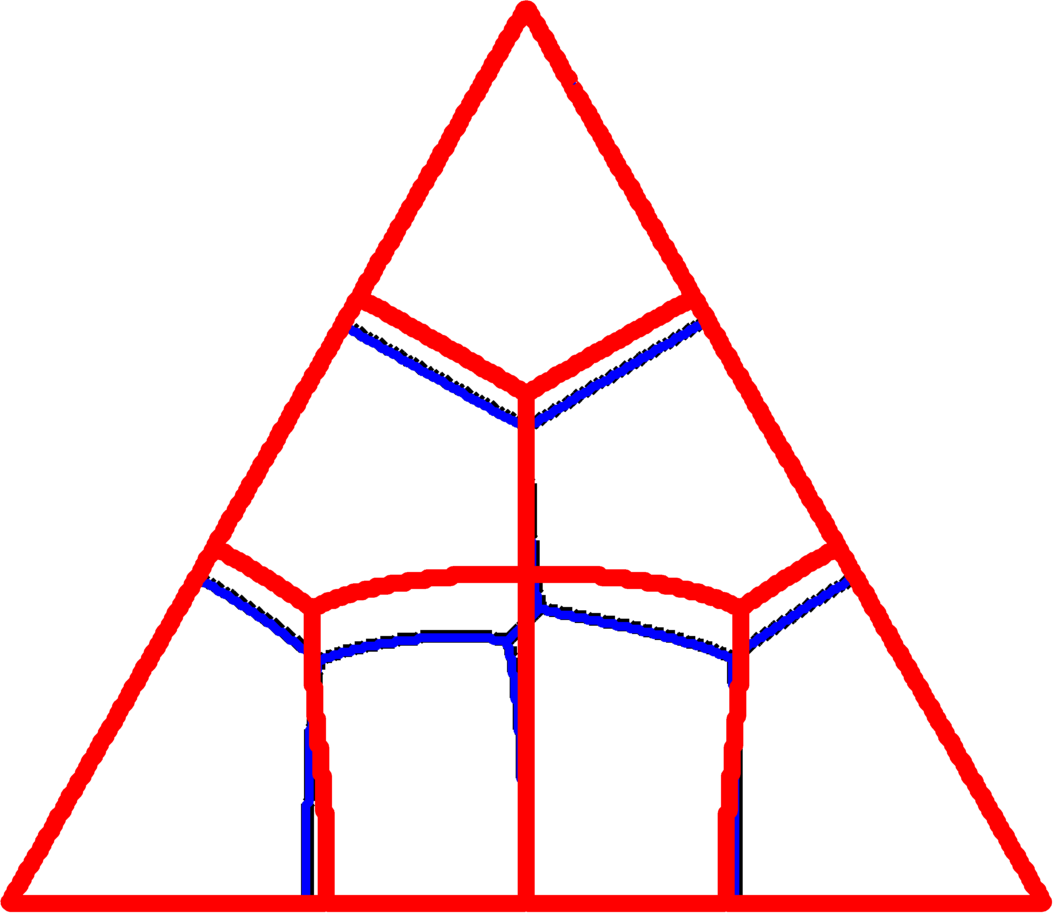}
& \includegraphics[width=0.19\textwidth]{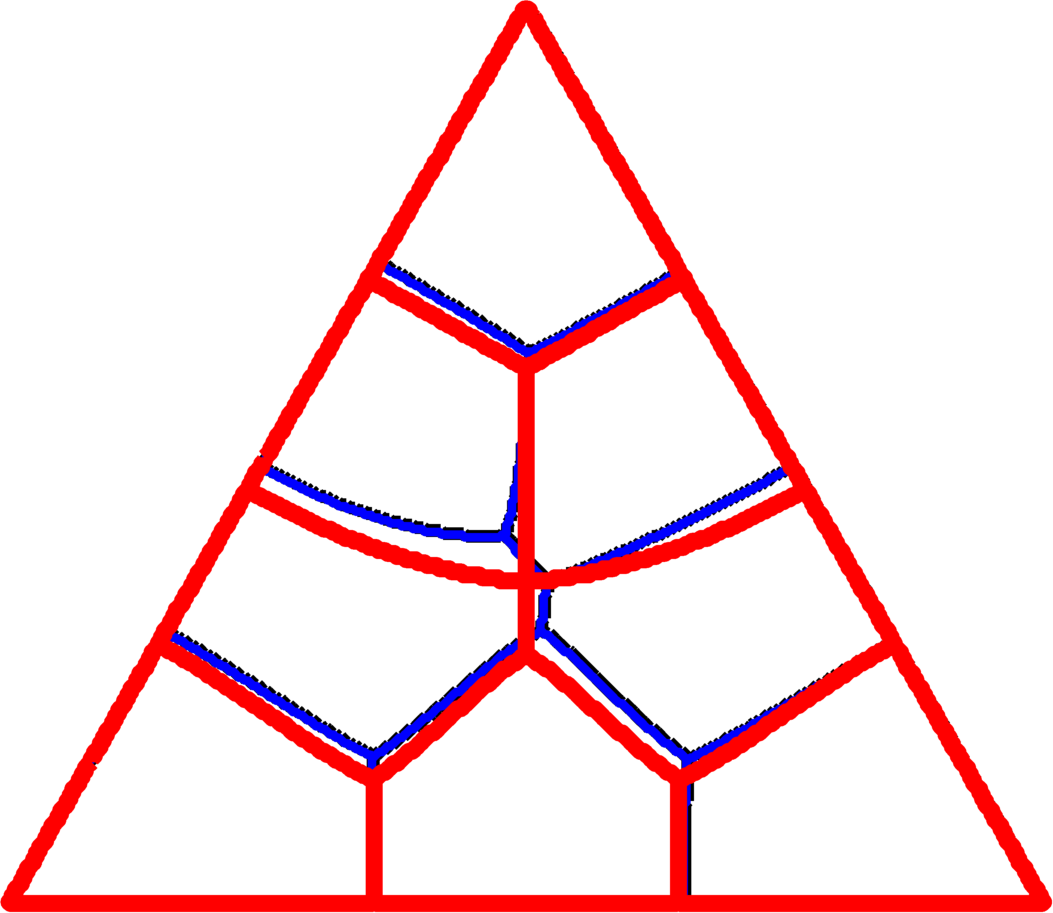}
&\includegraphics[width=0.19\textwidth]{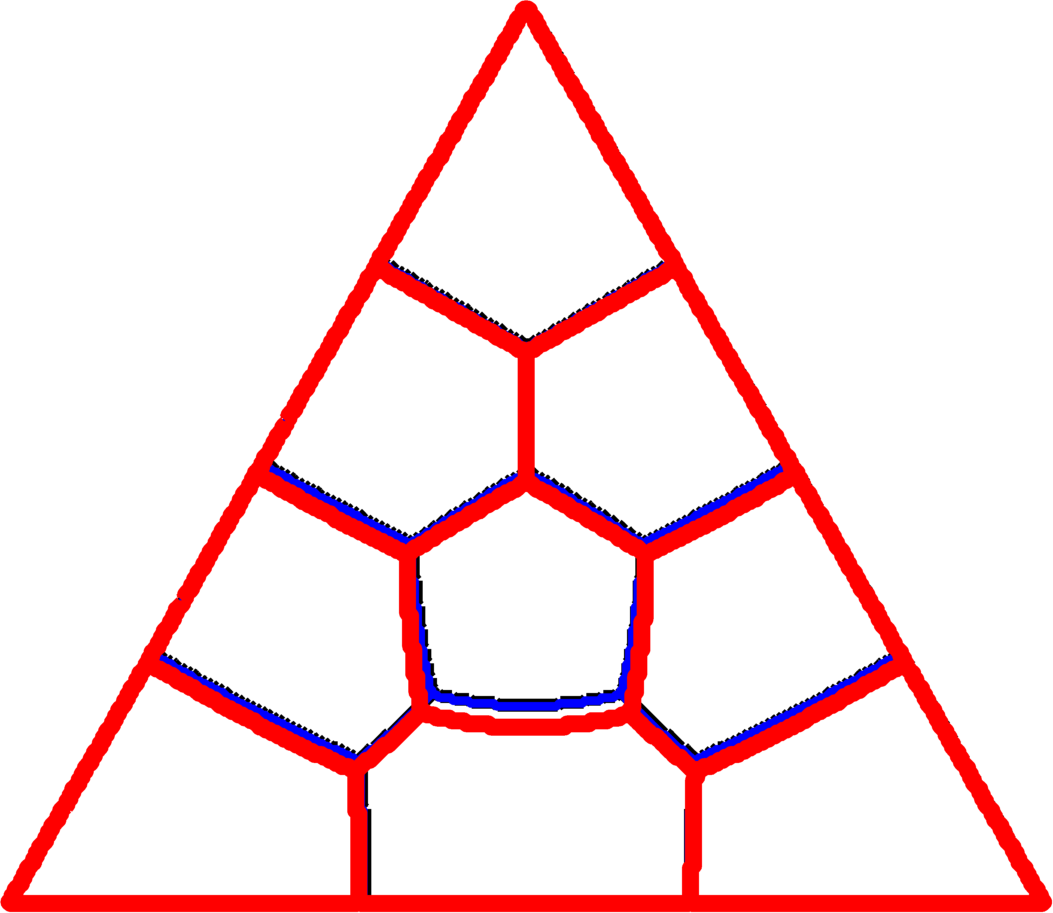}
&\includegraphics[width=0.19\textwidth]{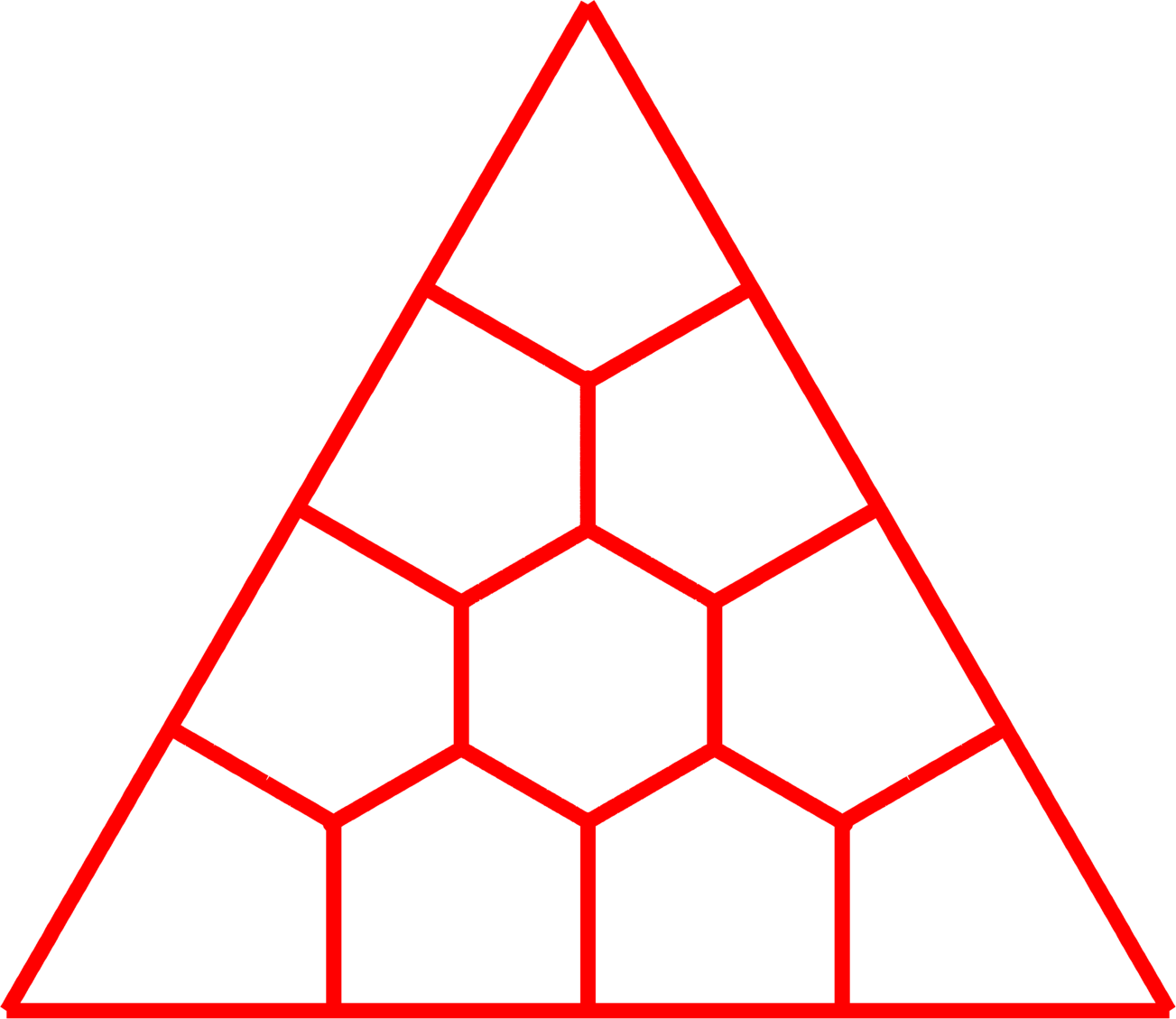} \\
$k=6$ & $k=7$ & $k=8$ & $k=9$ & $k=10$
\end{tabular}
\caption{Candidates for the max (red) and the sum (blue).\label{fig.equisummax}}
\end{figure}
\begin{figure}[h!t]
\centering
\setlength{\tabcolsep}{1pt}
\begin{tabular}{ccccccccc}
\includegraphics[width=0.19\textwidth]{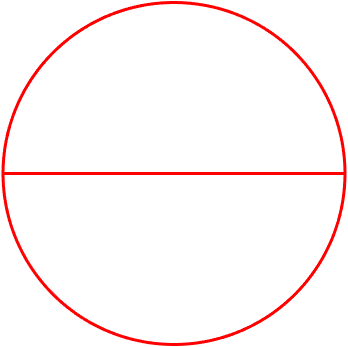}
& \includegraphics[width=0.19\textwidth]{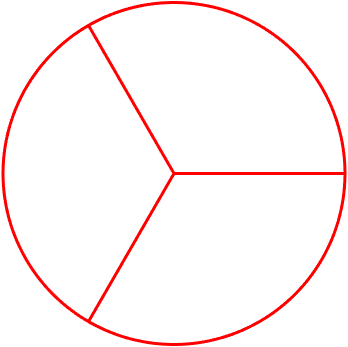}
& \includegraphics[width=0.19\textwidth]{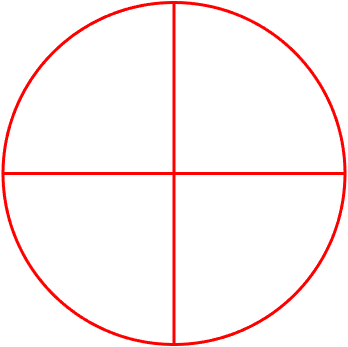}
& \includegraphics[width=0.19\textwidth]{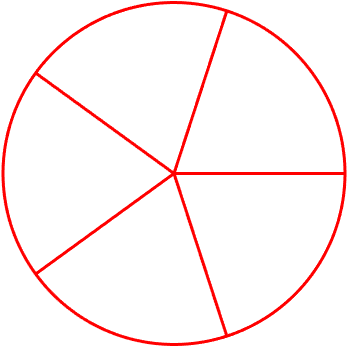}\\
$k=2$ & $k=3$ & $k=4$ &  $k=5$ \\
\end{tabular}
\begin{tabular}{ccccccccc}
\includegraphics[width=0.19\textwidth]{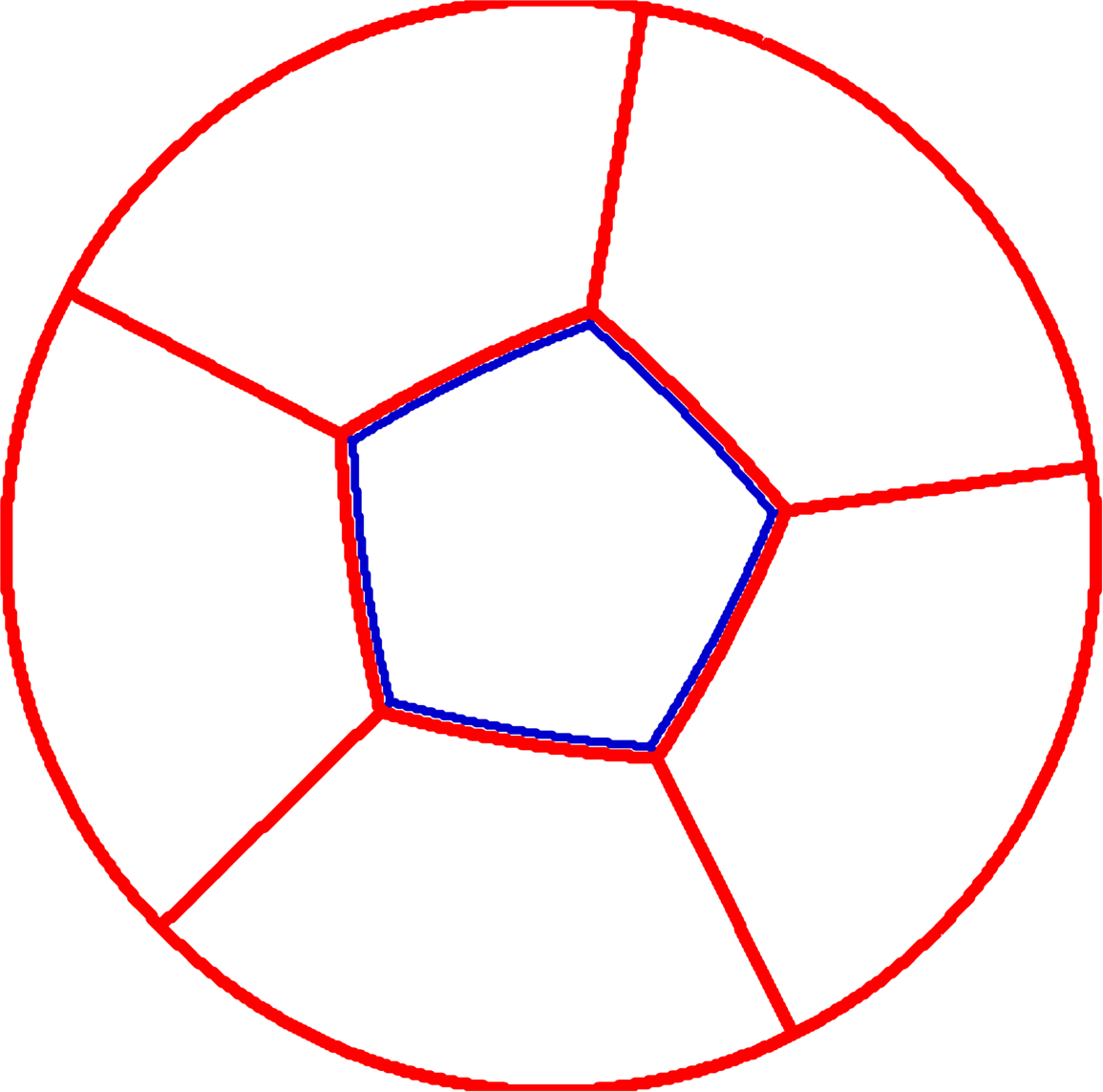}
&\includegraphics[width=0.19\textwidth]{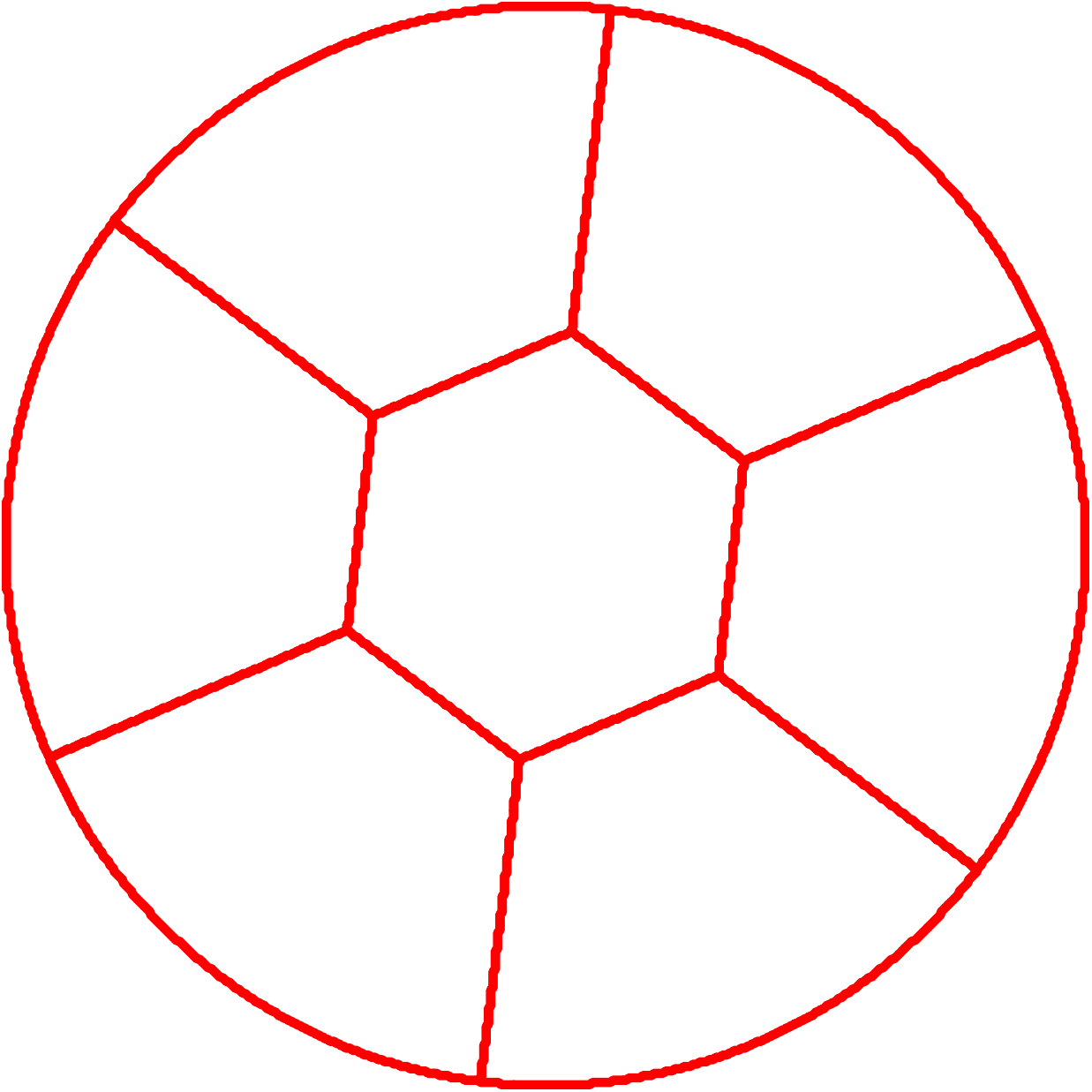}
& \includegraphics[width=0.19\textwidth]{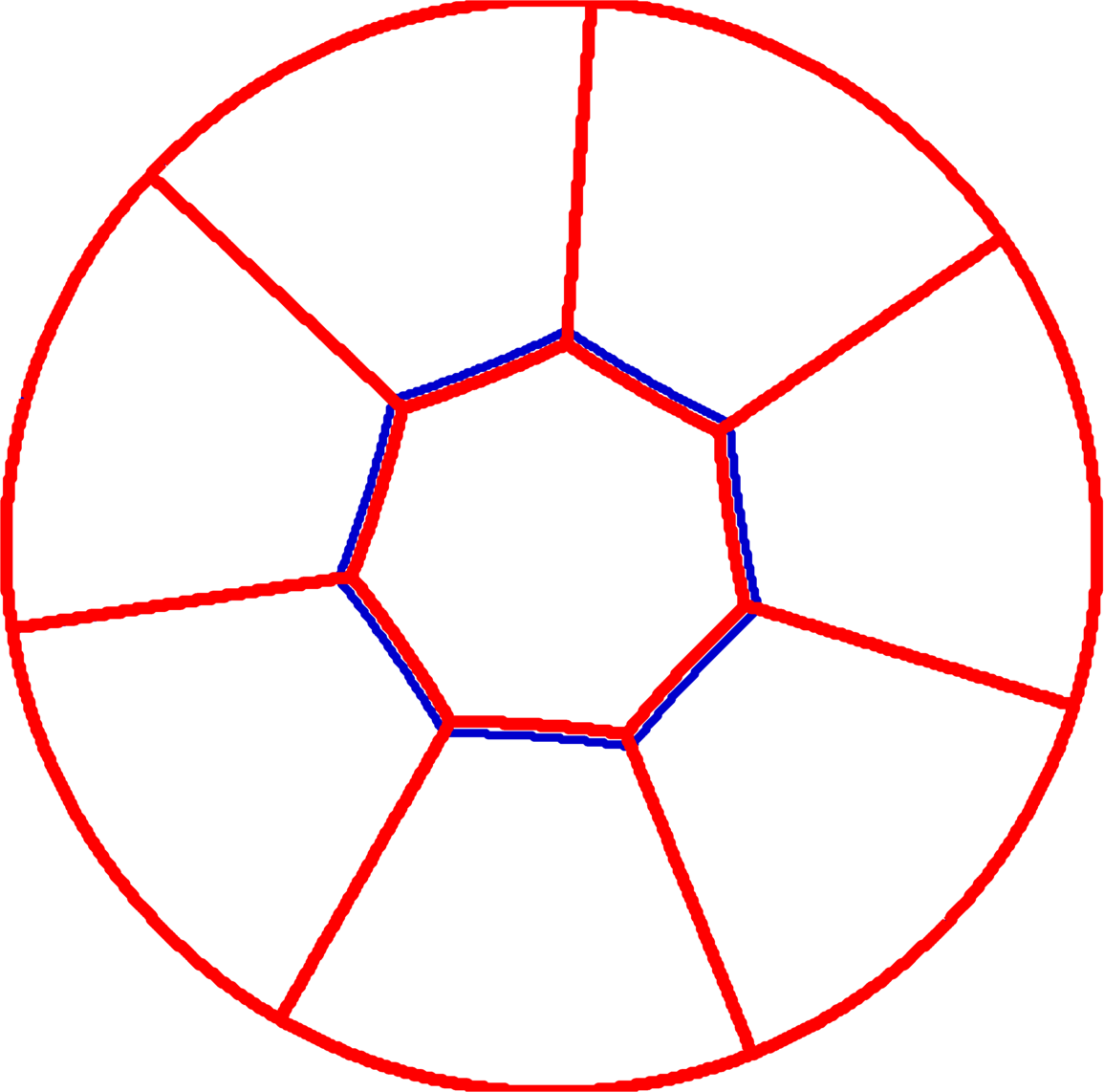}
& \includegraphics[width=0.19\textwidth]{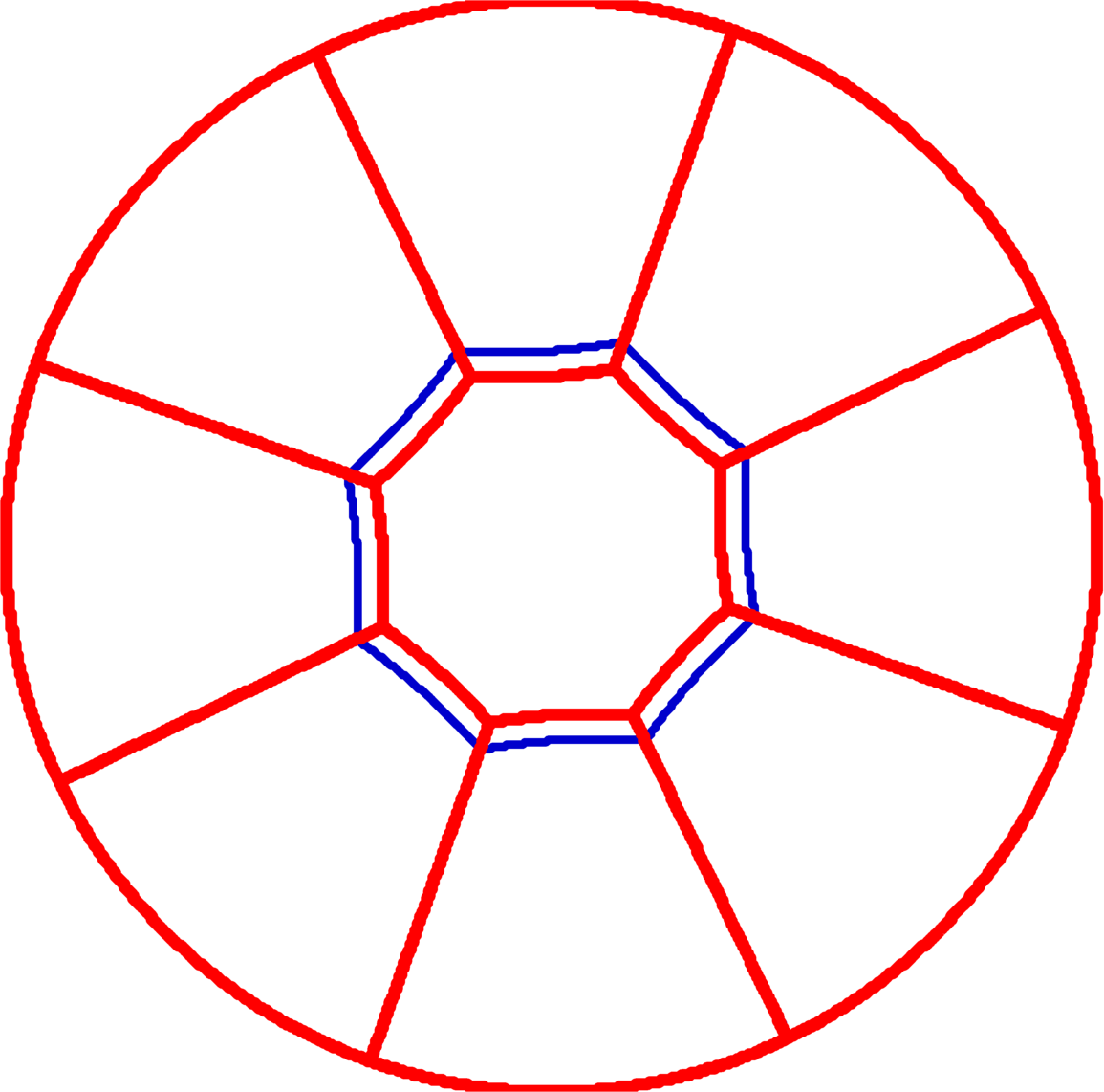}
& \includegraphics[width=0.19\textwidth]{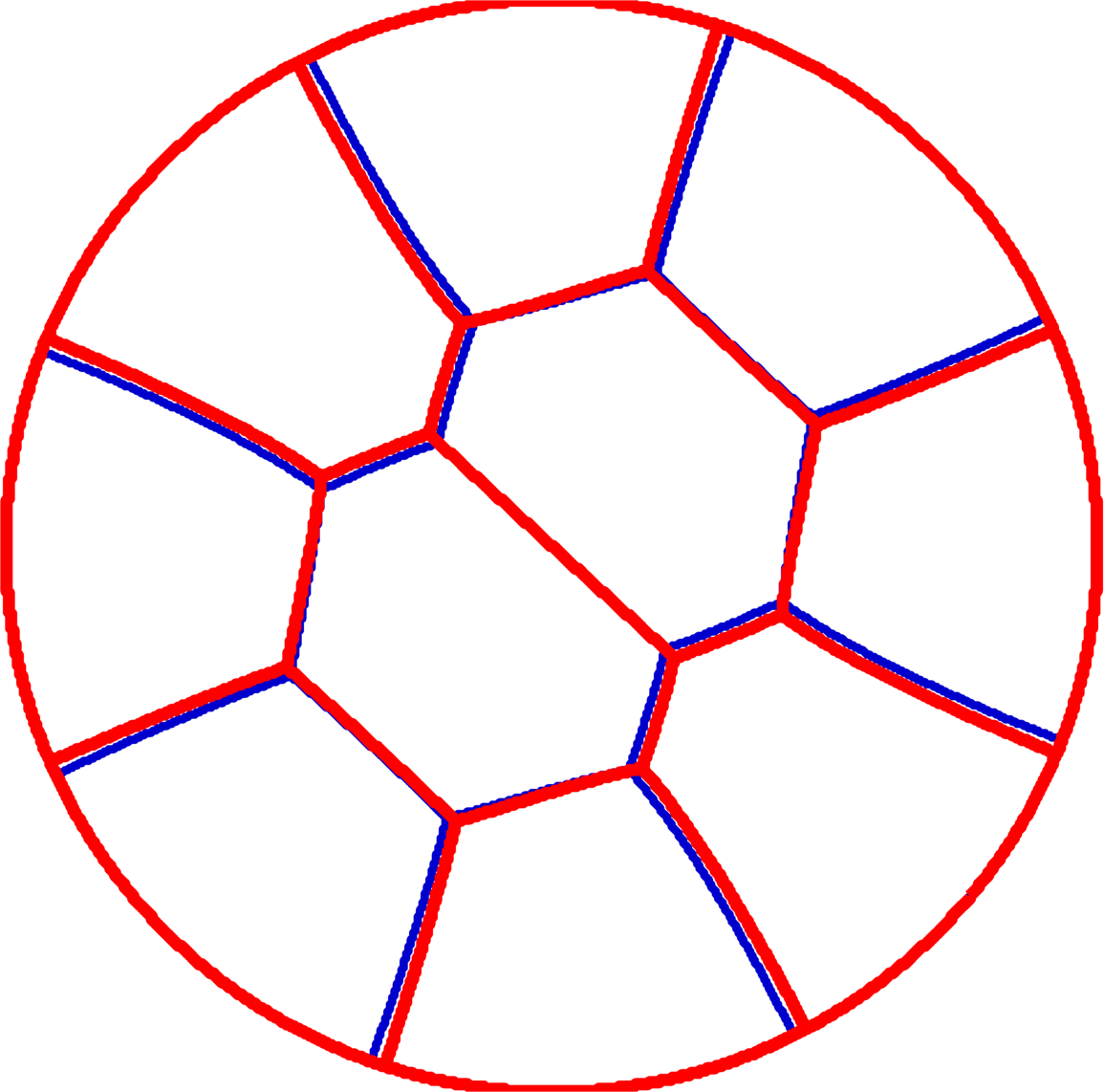}\\
$k=6$ & $k=7$ & $k=8$ & $k=9$ & $k=10$
\end{tabular}
\caption{Candidates for the max (red) and the sum (blue).\label{fig.disksummax}}
\end{figure}
\begin{figure}[h!t]
\centering
\setlength{\tabcolsep}{1pt}
\begin{tabular}{cccccccc}
\includegraphics[width=0.19\textwidth]{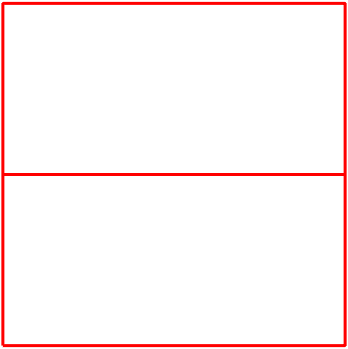}
& \includegraphics[width=0.19\textwidth,angle=270,origin=c]{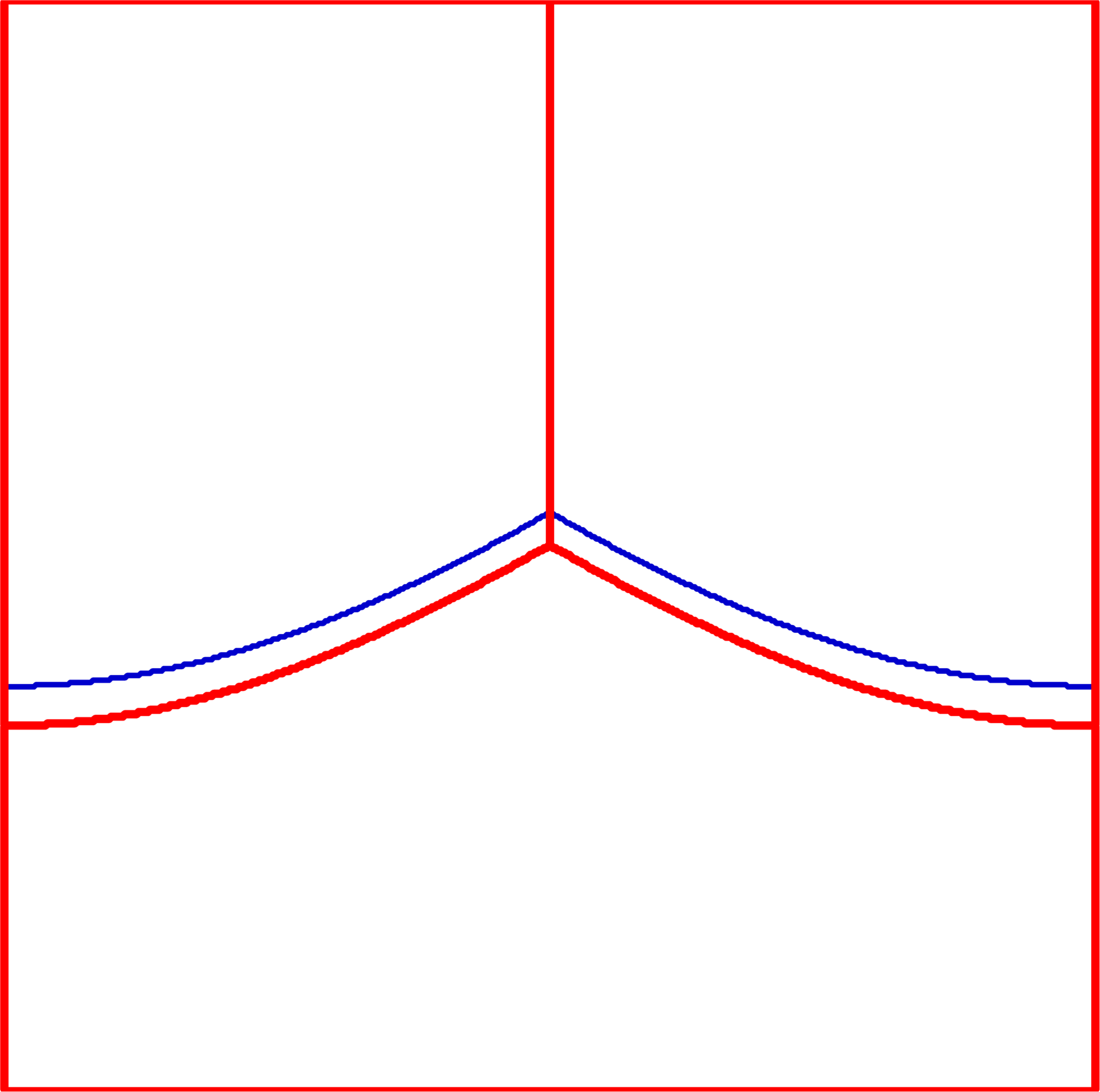}
& \includegraphics[width=0.19\textwidth]{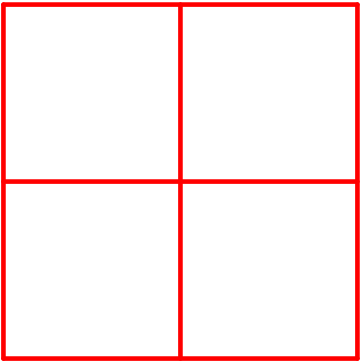}
& \includegraphics[width=0.19\textwidth]{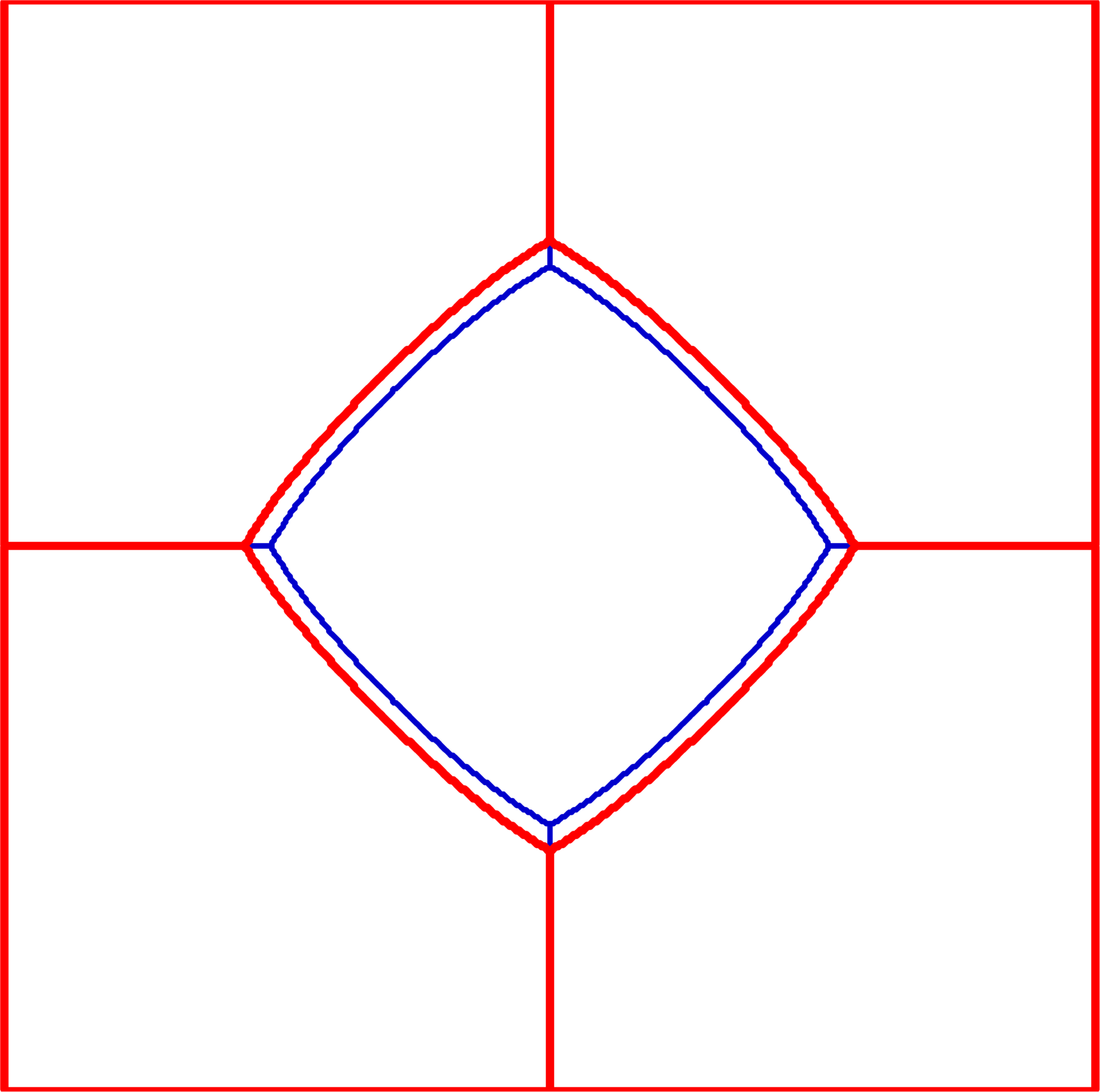}\\
$k=2$ & $k=3$ & $k=4$ & $k=5$\\
\end{tabular}
\begin{tabular}{ccccccccc}
\includegraphics[width=0.19\textwidth]{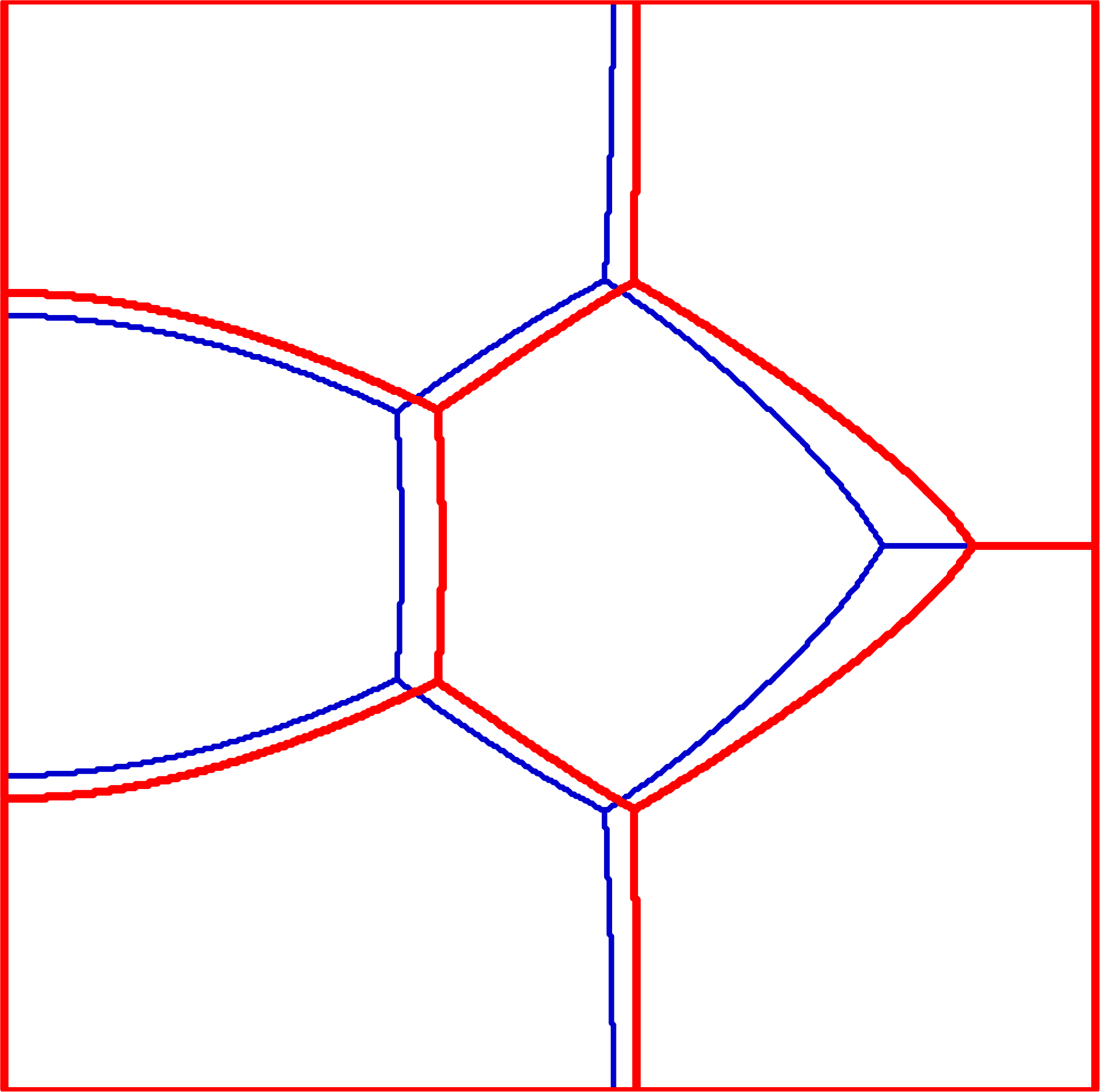}
& \includegraphics[width=0.19\textwidth]{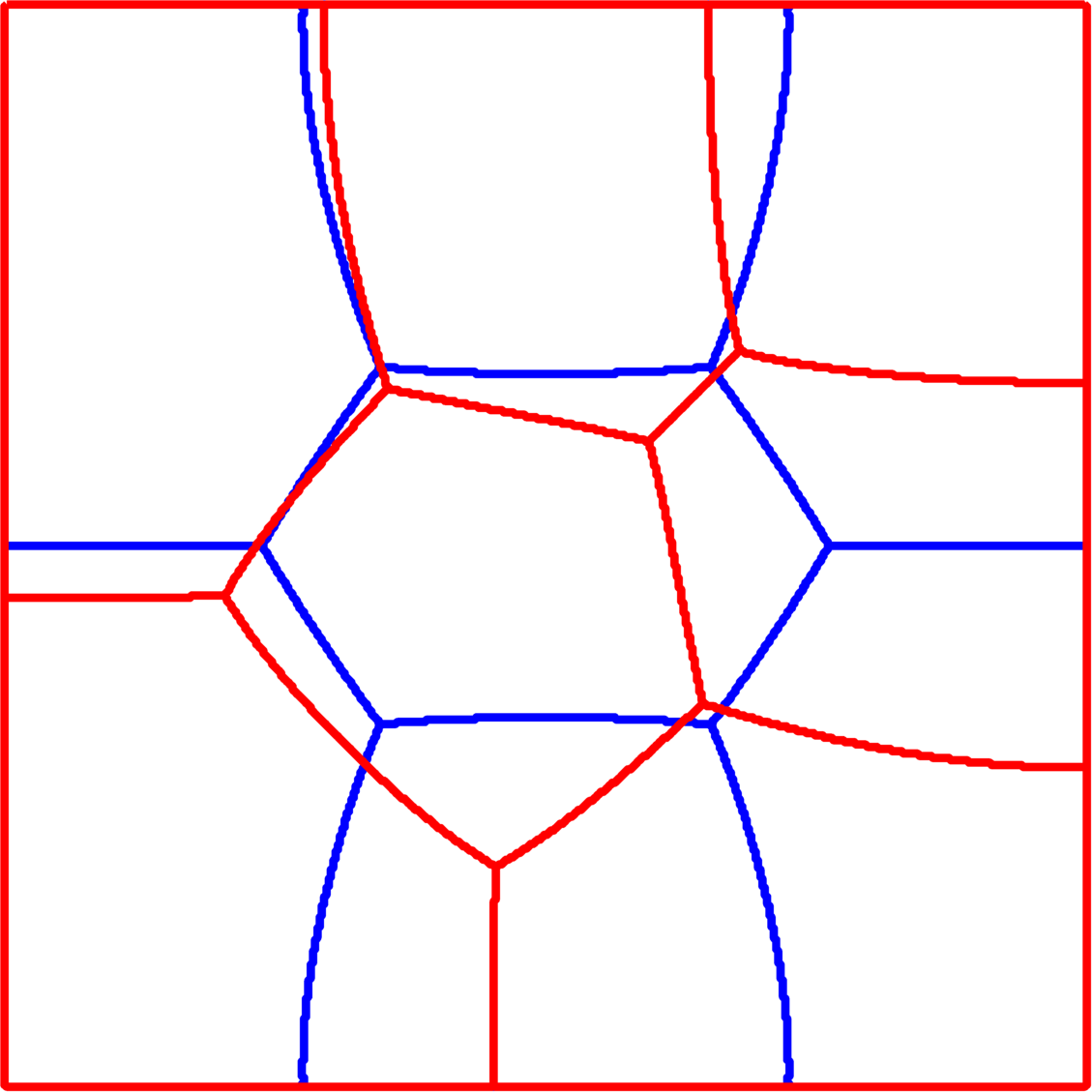}
& \includegraphics[width=0.19\textwidth]{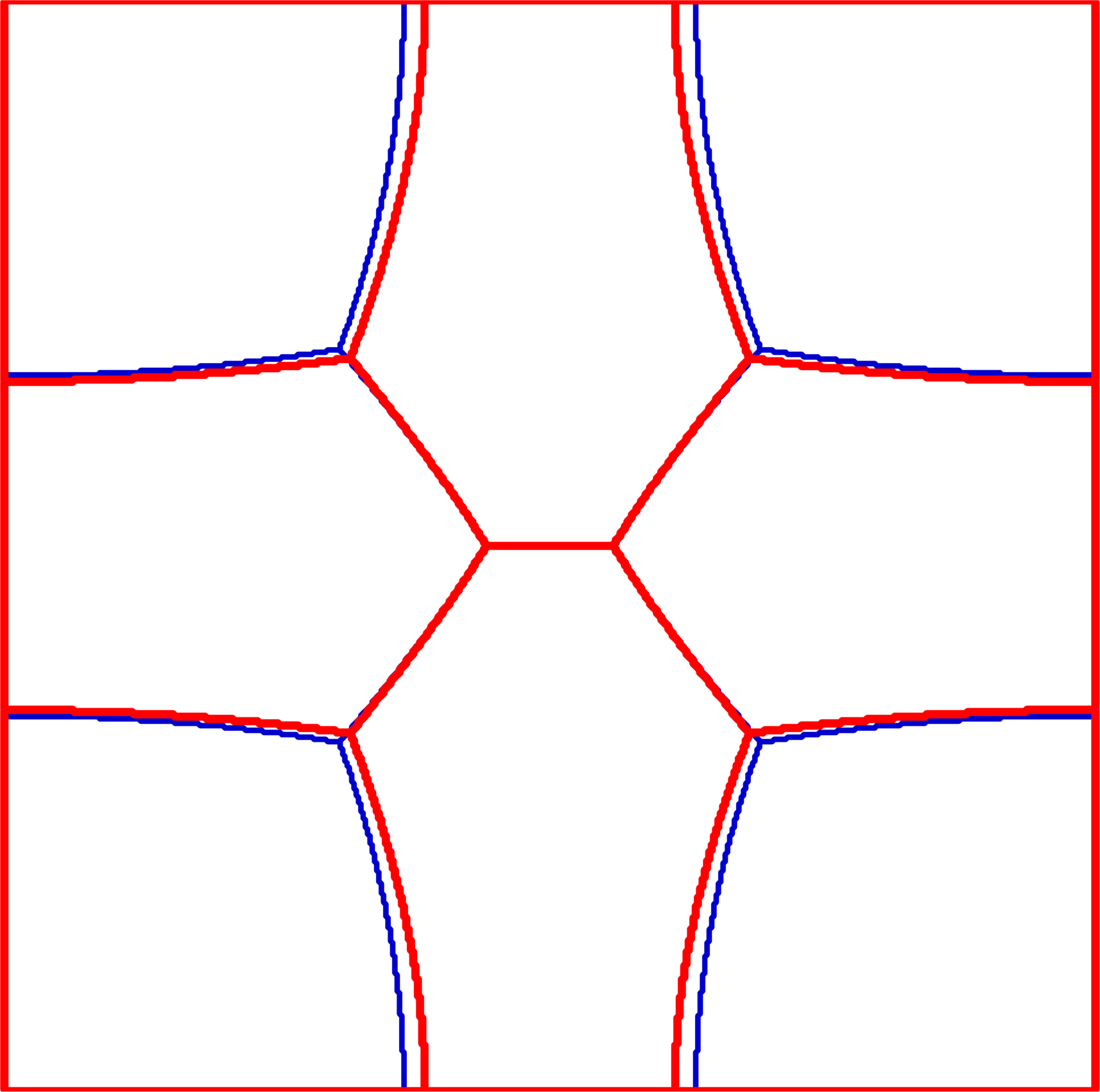}
& \includegraphics[width=0.19\textwidth]{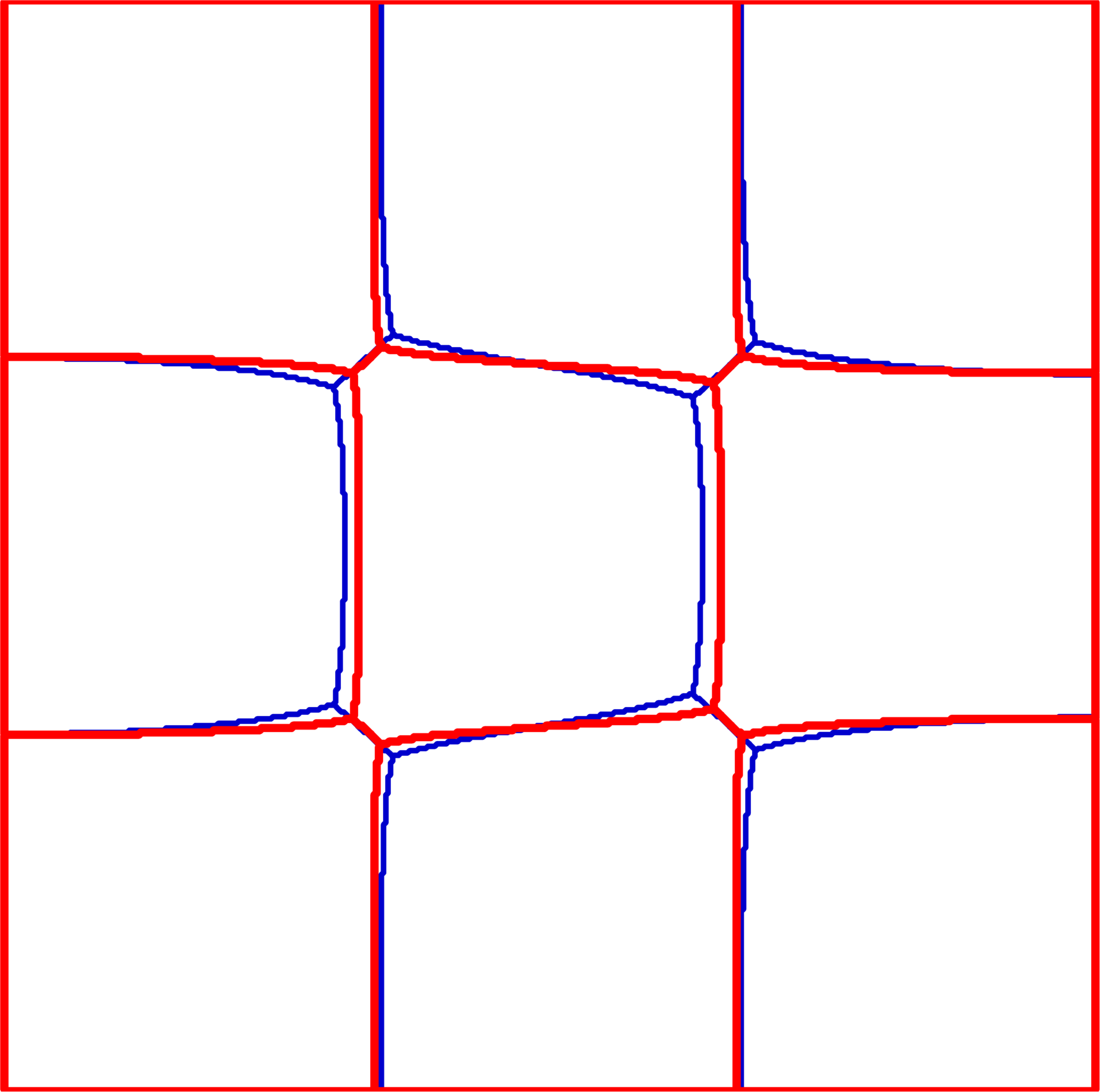}
& \includegraphics[width=0.19\textwidth]{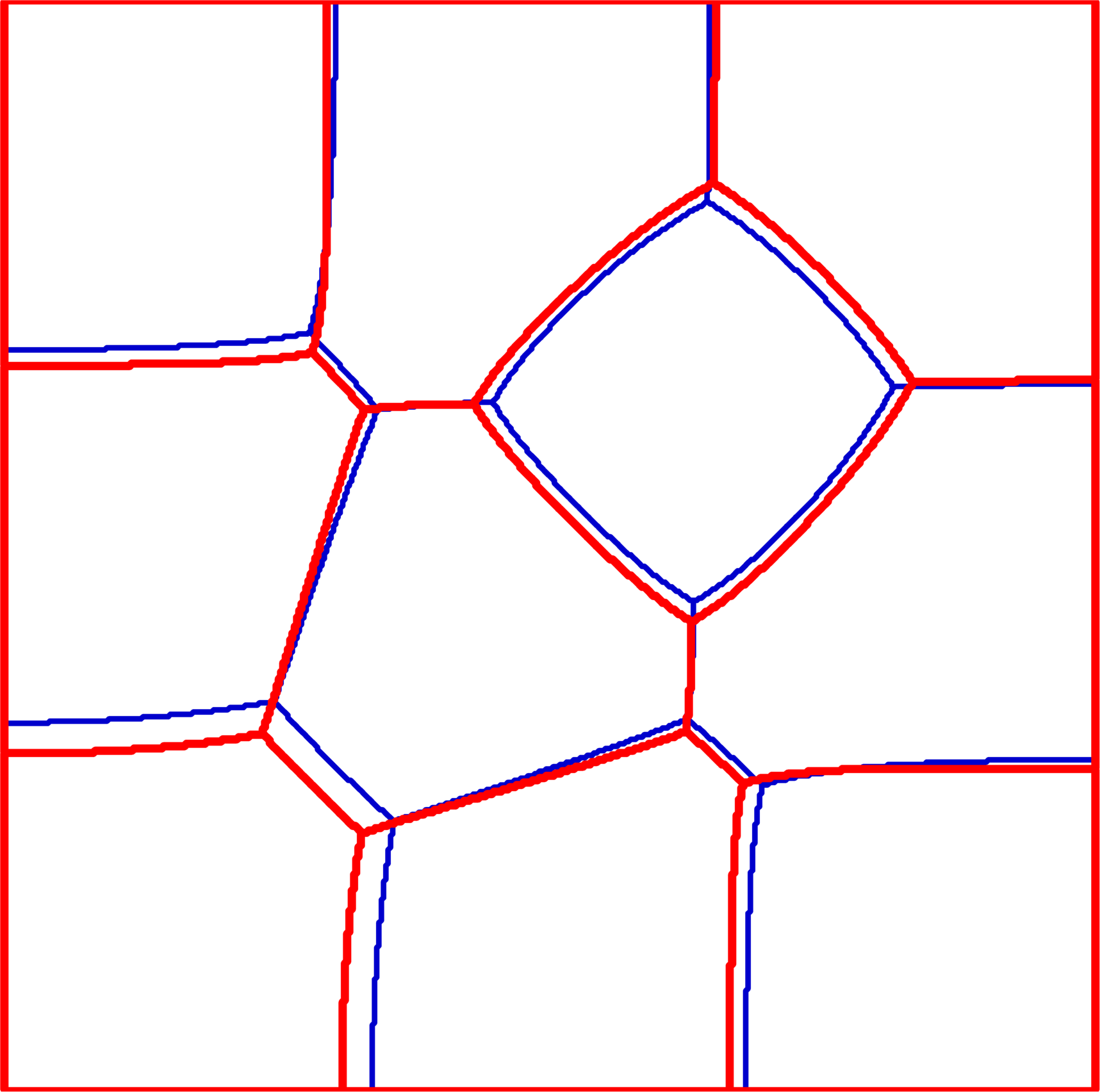}\\
$k=6$ & $k=7$ & $k=8$ & $k=9$ & $k=10$
\end{tabular}
\caption{Candidates for the max (red) and the sum (blue).\label{fig.carresummax}}
\end{figure}

In these figures, we observe that the candidates are very close except for two cases: 
\begin{itemize}
\item the equilateral triangle and $k=4$ : in that case, the optimal partition for the max is known and composed of four equilateral triangles. The candidate for the sum has less symmetry and has 4 singular points on the boundary and two interior singular points. These points collapse two by two to give 3 singular points on the boundary for the max.
\item the square for $k=7$: in that case, the energies of two candidates are very close and it is very difficult to conclude. These two candidates already appear in \cite{CyBaHo}.
\end{itemize}
In Table~\ref{synthese-sum}, we give the energies of the optimal partition for the sum obtained by the iterative method. We can compare these values with those of Table \ref{tab.Linfty} for the max. This is completely coherent with the previous section: when the criteria shows that the candidate to be optimal for the max can not be optimal for the sum, we obtained new partition with lower energy and which is not an equipartition.
\begin{table}[h!]
\begin{center}
\begin{tabular}{|c|c|c|c|c|c|c|c|c|c|}
\hline
$k$ & 2 & 3 & 4 & 5 & 6 & 7 & 8 & 9 & 10\\
\hline
$\triangle$ & 121.65 & 143.38 & 207.75 & 251.69 & 277.06 & 338.01 & 387.72 & 426.25 & 453.87 \\
\hline
$\ocircle$ & 14.68 & 20.19 & 26.37 & 33.21 & 38.95 & 44.02 & 50.44 & 57.69 & 63.67 \\
\hline
$\square$ & 49.40 & 66.14 & 78.956  &  103.37 & 125.68 & 144.34  & 159.93 & 177.68 & 201.85 \\
\hline
\end{tabular}
\end{center}
\caption{Lowest energies for the sum with the iterative algorithm.}
\label{synthese-sum}
\end{table}

\subsection{Partitions with curved polygons}
In this section we use, when available, an explicit representation for the partitions in order to exhibit candidates for the sum which are better than those for the max. We notice that for $k\in\{6,8,9\}$ for the disk and $k\in\{3,5\}$ for the square the partition has a simple symmetric structure. It is possible to approximate each of these partitions by assuming that the boundaries are either segments or arcs of circles. One important aspect used in the construction is the equal angle property which implies that all angles around a singular point are equal.

In the case of the disk, for $k\in \{6,8,9\}$ we obtained partitions which have a rotational symmetry of angle $2\pi/(k-1)$. Furthermore, we have a central domain which is a regular curved polygon with $k-1$ sides and $k-1$ external domains, each obtained by joining the edges of the polygons to the boundary of the disk with straight segments. This suggested us to see what happens if we assume that the rounded polygon's edges are arcs of circle. We note that this is not known theoretically. Furthermore, we want that pairs of consecutive arcs make an angle of $2\pi/3$. The general configurations are represented in Figure \ref{curved_polyk6} for $k=6$ and Figure \ref{curved_polyk8} for $k=7,8$.
We describe now the procedure of constructing such a rounded polygon for $k=6$. Let's start with an angle $\angle BAB'$ of measure $2\pi/5$ with $AB'=AB=\ell$. Note that $\ell \in (0,1)$ will be one of the parameters of the problem since we wish to vary these partitions in function of the size of the inner curved polygon. We wish to construct an arc $\arc{BB'}$ which makes equal angles of $\pi/3$ with $AB$ and $AB'$ (this is so that after symmetrization we have a pentagon with $2\pi/3$ angles. In order to construct this arc $\arc{BB'}$ we need the position of the center $C$ of the corresponding circle and the radius of this circle $CB=CB'=R$. Note that $\angle(CB,\arc{BB'}) = \pi/2$ and $\angle(AB,\arc{BB'})=\pi/3$, which implies that $\angle ABC=\pi/6$. By symmetry, we have $\angle AB'C=\pi/6$. Once we know $\theta = \angle BAB'$ and $\ell$, it is possible to find all other elements of the triangle $ABC$ by using, for example, the sine theorem
\[ \frac{AB}{\sin \hat C} = \frac{BC}{\sin\hat A}=\frac{CA}{\sin \hat B} \Leftrightarrow \frac{\ell}{\sin(\frac{\theta}{2}-\frac{\pi}{6})}=\frac{R}{\sin\frac\theta2}=\frac{AC}{\sin\frac \pi6}. \]
Once $AC$ is known we can determine the position of the center $C$ and then we can trace an arc of a circle of radius $R$ going from $B$ to $B'$. Note that the measure of $\angle BCB'$ is also needed in the implementation and is equal to $\theta-\pi/3$. The situation is similar for $k\in \{8,9\}$ with the difference that now $C$ is on the other side of $BB'$ so that the curved polygon has inward arcs. The case $k=8$ is depicted in Figure \ref{curved_polyk8} and the arguments for finding $R$ and $AC$ are essentially the same. 
\begin{figure}[!h]
\centering
\includegraphics[width=0.9\textwidth]{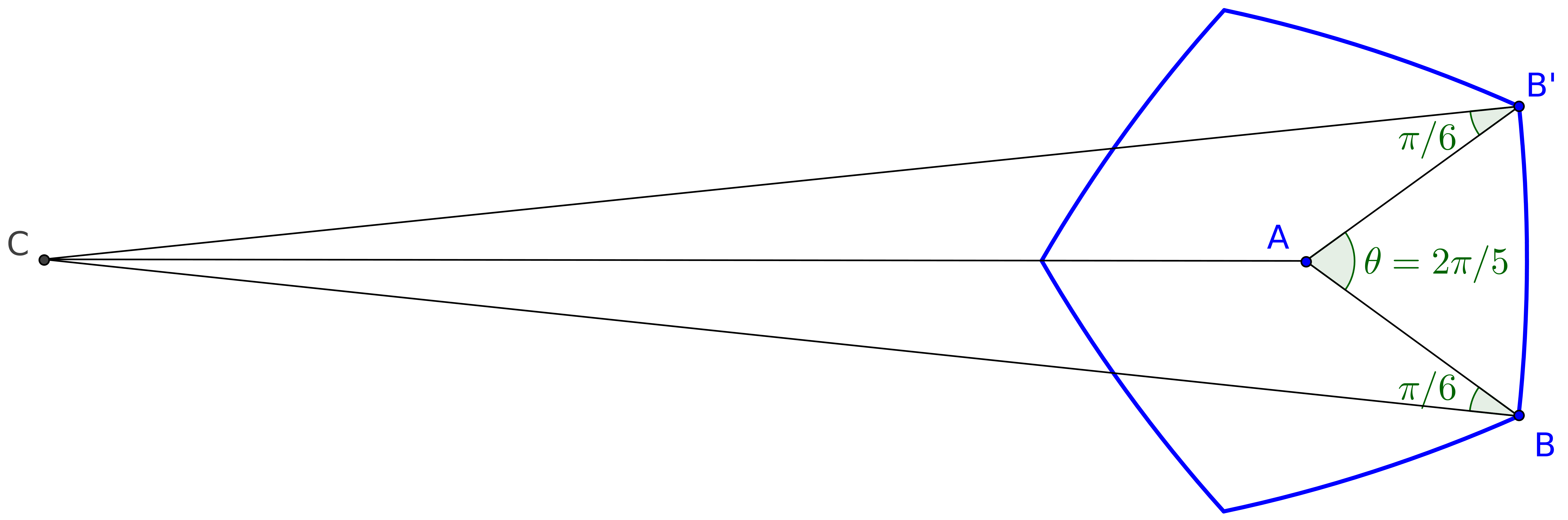}\\
\caption{Explicit construction of curved regular pentagon with angles equal to $2\pi/3$.}
\label{curved_polyk6}
\end{figure}
\begin{figure}[!h]
\centering
\includegraphics[width=0.9\textwidth]{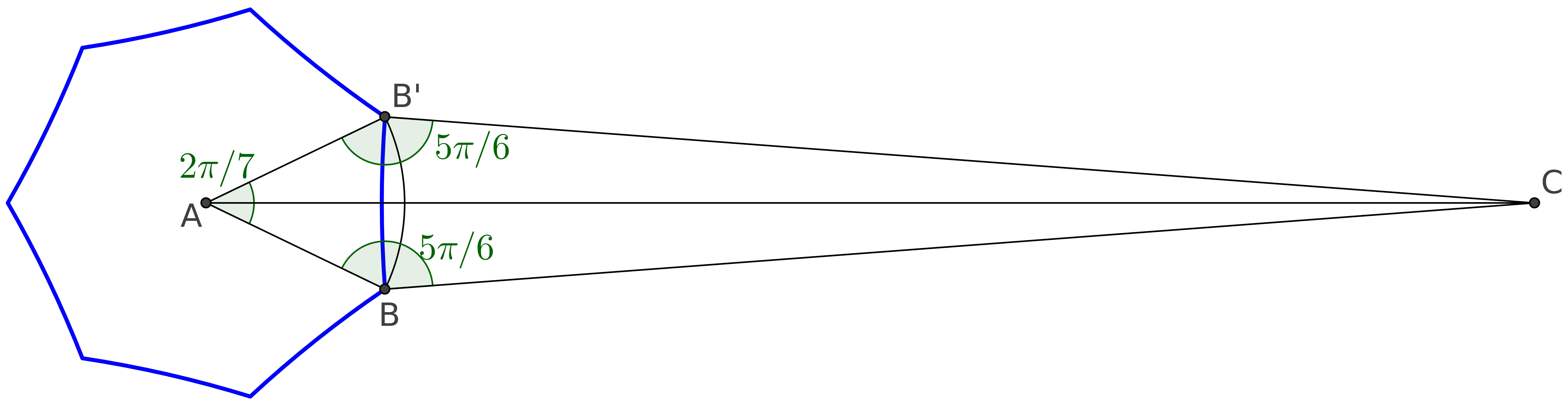}
\caption{Explicit construction of curved regular heptagons with angles equal to $2\pi/3$.}
\label{curved_polyk8}
\end{figure}
Note that once the arc $\arc{BB'}$ has been constructed, the exterior domain is drawn by continuing segments $AB,AB'$ until they reach the unit circle. Thus we have a way of constructing admissible partitions with the equal angle property which are very similar to the ones obtained with the iterative method. 
Once we fix $k \in \{6,8,9\}$ we can optimize the sum of the eigenvalues of the partition with respect to $\ell$. 
Let us remark that if we apply this method for $k=7$, we have necessarily a hexagon with straight lines as observed in Figure \ref{fig.disksummax} and analyzed in \cite{BBN16}.

The same method can be applied in the case of the square for $k=3,5$ (see Figure~\ref{curved_polys}). For $k=3$, the equal angle property applied at a boundary singular point imposes that the center $C$ of the circle is along a side of the square. 
As can be seen in Figure \ref{curved_polys} we denote by $B$  the triple point and $AB$ is the segment along the boundary of the partition cells along the symmetry axis. With these considerations, the equal angle property implies that $\angle ABC = \pi/6$. See Figure \ref{curved_polys} for more details. 
For $k=5$, we note that we have $4$ axes of symmetry and the central domain resembles a curved polygon with $4$ sides. We apply the previous arguments (see Figures \ref{curved_polyk6}-\ref{curved_polyk8}) to construct a regular polygon with $4$ sides and angles of measure $2\pi/3$. Then we extend this polygon to a partition of the square like in Figure \ref{curved_polys}. In both cases, we optimize the sum of the eigenvalues of these partitions with respect to the length $\ell=AB \in (0,1)$. 
\begin{figure}[h!t]
\centering 
\includegraphics[height=0.3\textwidth]{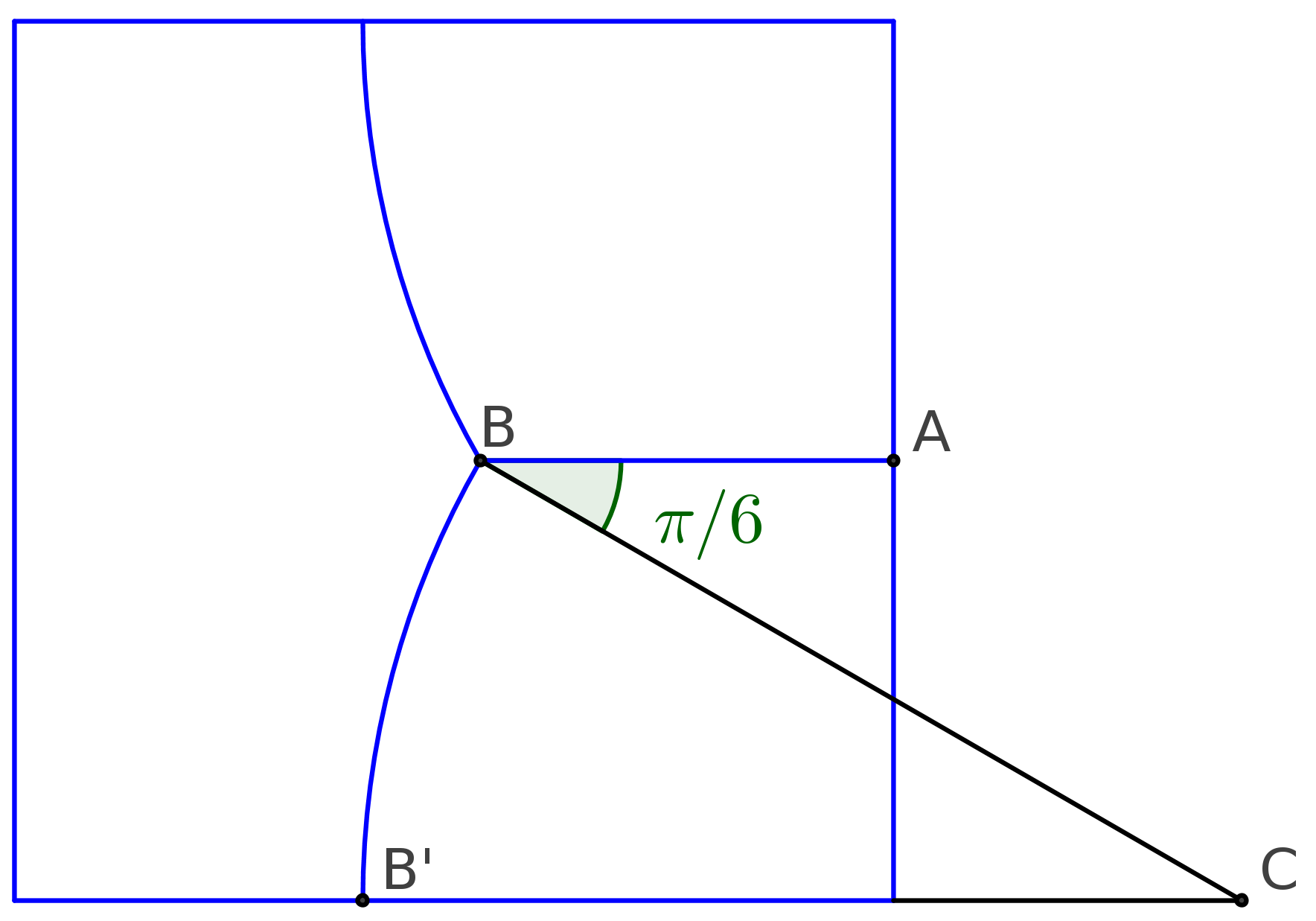}
\hspace{1cm}\includegraphics[height=0.3\textwidth]{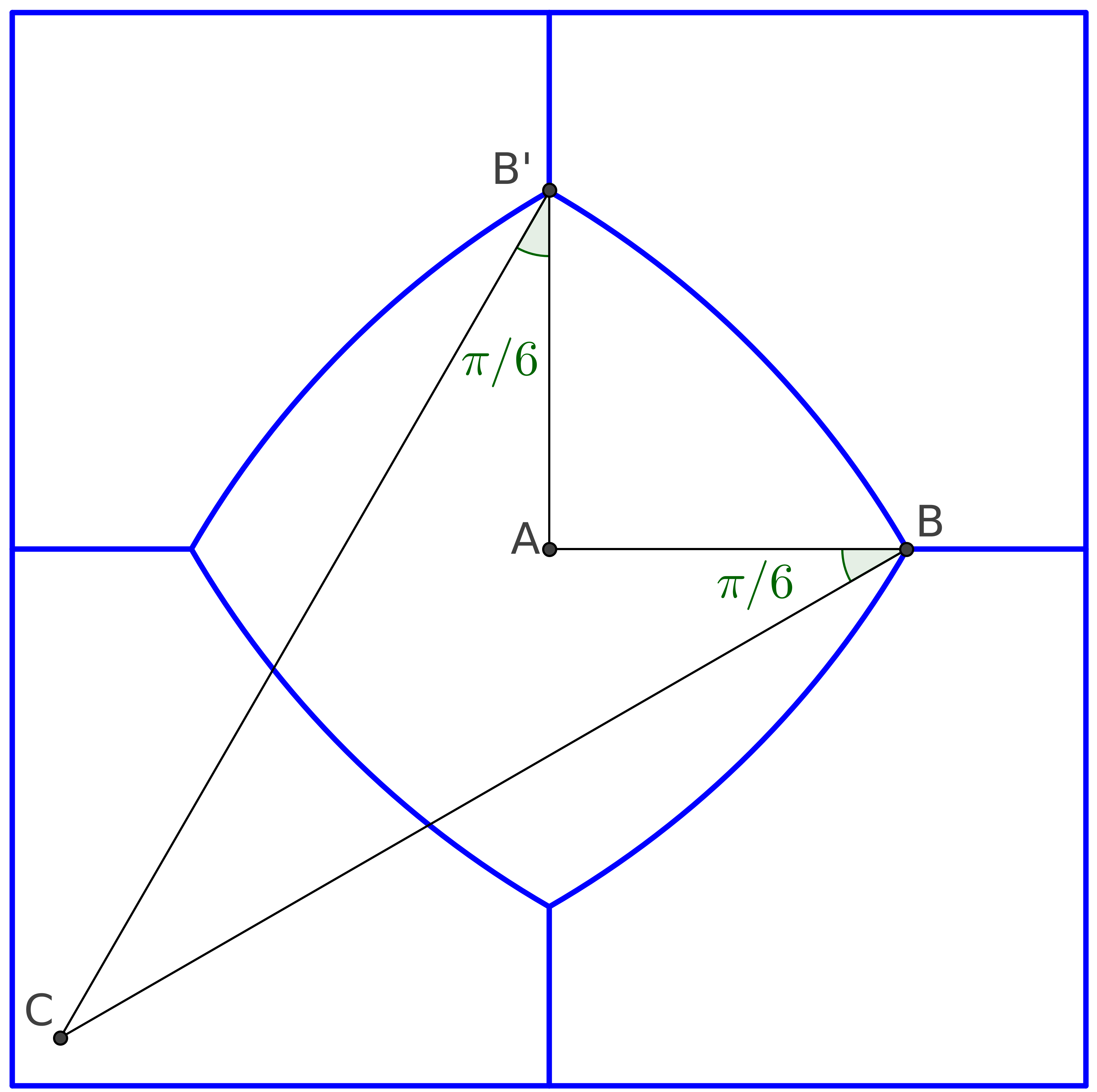}
\caption{Explicit construction of a $3$- and $5$-partition of the square with curved polygons.\label{curved_polys}}
\end{figure}

We implemented this method in FreeFem++  for the five configurations and Table~\ref{tab.encurv} presents the results.
We note that in every cases except for the square and $k=3$, the average of the eigenvalues is smaller than the energy obtained using the iterative method (see Table~\ref{synthese-sum}). However, the partitions we obtained with the two methods are close, as can be seen in Figure \ref{expl-vs-iter}.
\begin{table}[h!]
\begin{center}\begin{tabular}{|c|c|c| c | c| c|}
\hline
& \multicolumn{3}{c|}{Disk} &\multicolumn{2}{c|}{Square} \\
\hline
$k$ & 6 & 8 & 9 & 3 & 5\\
\hline
$\ell$ &0.411 & 0.3975 & 0.3981 & 0.4781 &0.5093\\
\hline
$\frac 1k\sum_{k}\lambda_{1}(D_{i})$ & 38.85 & 50.29 & 57.51 & 66.16 & 103.34\\
\hline
\end{tabular}\end{center}
\caption{Energies of the partition with constant curvatured boundaries.\label{tab.encurv}}
\end{table}

\begin{figure}[h!t]
\centering
\includegraphics[width=0.19\textwidth]{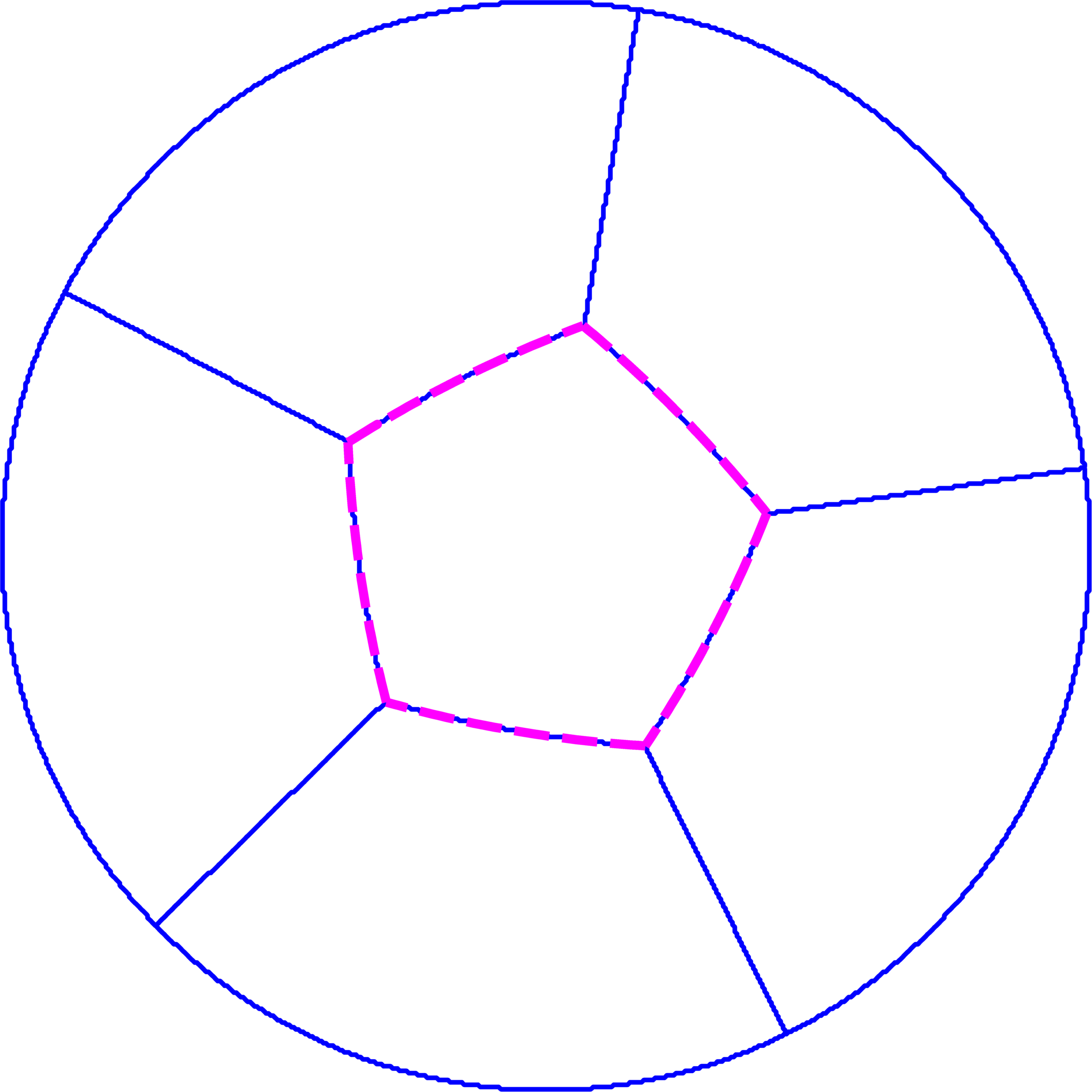}
\includegraphics[width=0.19\textwidth]{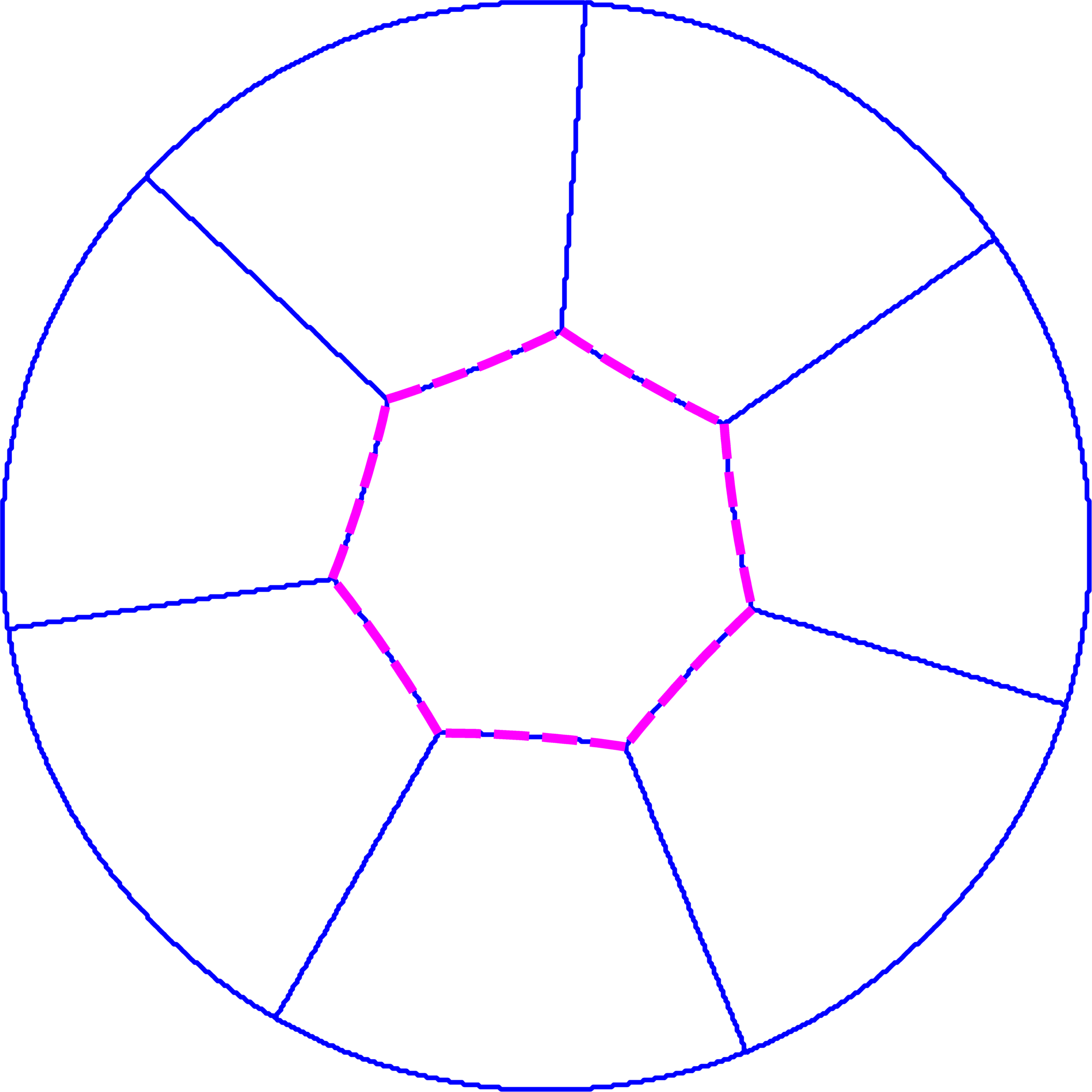}
\includegraphics[width=0.19\textwidth]{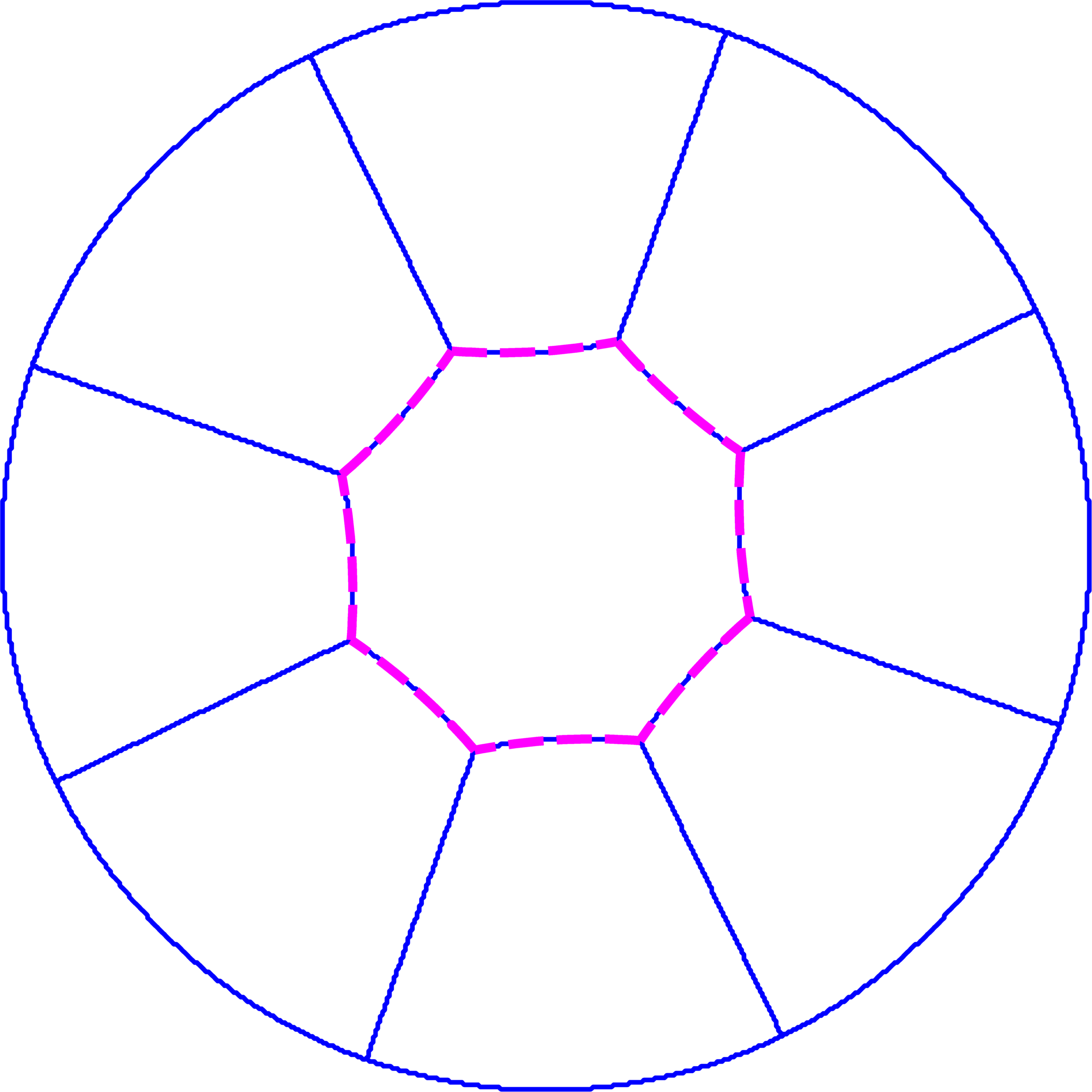}
\includegraphics[width=0.19\textwidth,angle=270,origin=c]{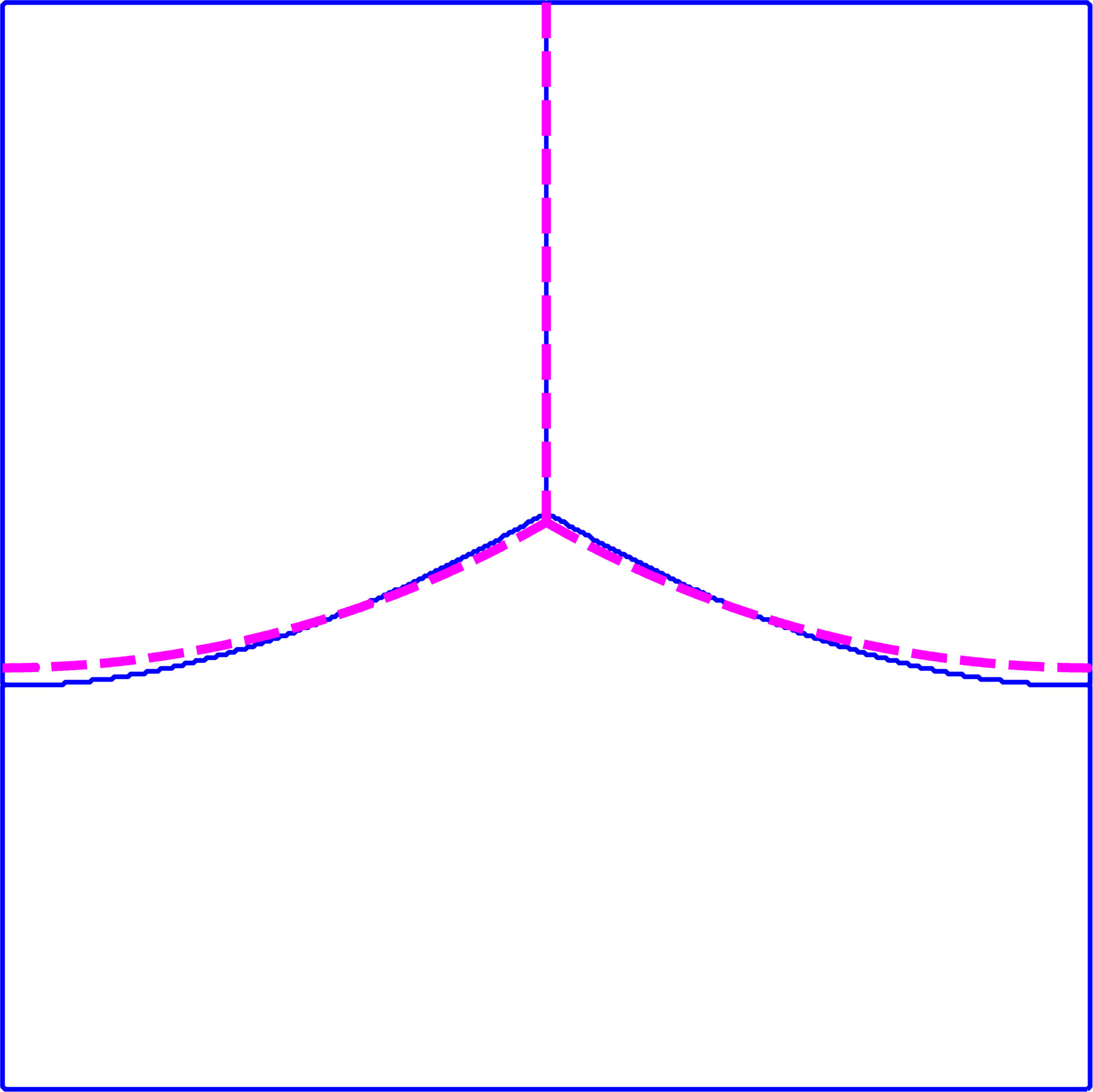}
\includegraphics[width=0.19\textwidth]{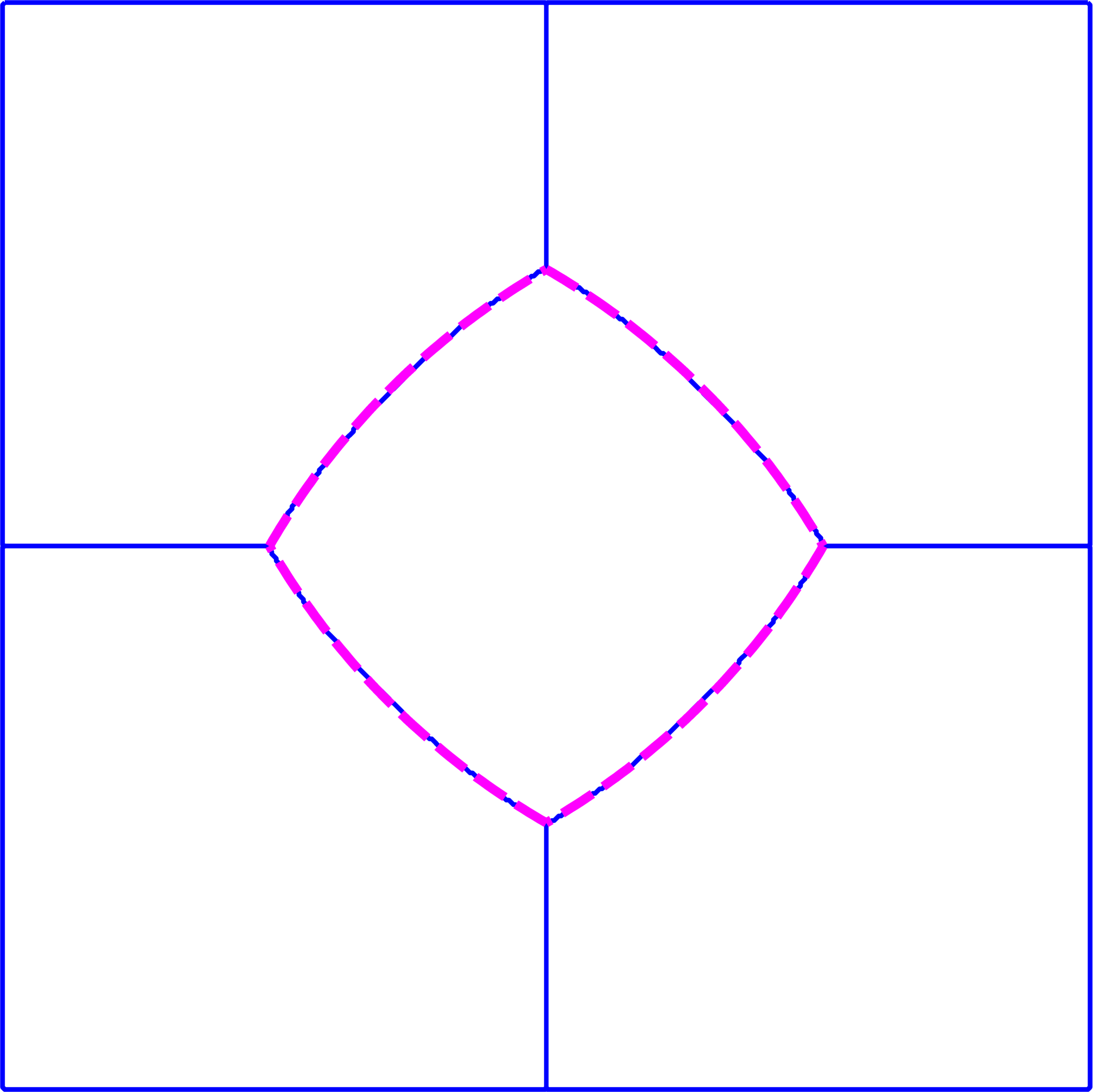}
\caption{Explicit partitions (magenta) vs partitions with iterative algorithm (blue).}
\label{expl-vs-iter}
\end{figure}


\ack This work is supported by a public grant overseen by the French National Research Agency (ANR) as part of the ``Investissements d'Avenir'' program (LabEx {\it Sciences Math\'ematiques de Paris} ANR-10-LABX-0098 and ANR {\it Optiform} ANR-12-BS01-0007-02).


%
  
\bibliographystyle{acmurl}



\begin{address}
  Beniamin Bogosel and Virginie Bonnaillie-No\"el \\
  D\'epartement de Math\'ematiques et Applications (DMA - UMR 8553) \\ 
  PSL Research University, ENS Paris, CNRS\\ 
  45 rue d'Ulm, F-75230 Paris cedex 05, France \\
  \texttt{beniamin.bogosel@ens.fr} and \texttt{bonnaillie@math.cnrs.fr}
\end{address}

\end{document}